\documentclass[a4paper,10pt]{article}
\usepackage{amssymb}
\textheight 21cm \textwidth 16cm 
\topmargin -1.4cm 
\oddsidemargin 0.4cm \evensidemargin 0.4cm 
\usepackage{latexsym}
\usepackage{amsmath}
\usepackage{amsthm}
\usepackage{amsfonts}
\usepackage{amssymb}
\usepackage{graphicx}

\newtheorem{Th}{Theorem}
\newtheorem{Prop}{Proposition}
\newtheorem{Co}{Corollary}
\newtheorem{Lm}{Lemma}
\newtheorem{Lma}{Lemma}[section]
\newtheorem{Dfi}{Definition}
\newtheorem{Rm}{Remark}

\newcommand{\be}{\begin{equation}}
\newcommand{\ee}{\end{equation}}
\newcommand{\R}{\mathbb{R}}
\newcommand{\N}{\mathbb{N}}
\newcommand{\C}{\mathbb{C}}
\newcommand{\Z}{\mathbb{Z}}

\newcommand{\essinf}{\rm{essinf}}

\newcommand{\reset}{\setcounter{equation}{0}\setcounter{Th}{0}\setcounter{Prop}{0}\setcounter{Co}{0}
\setcounter{Lm}{0}\setcounter{Rm}{0}}
\def\La{\Lambda}
\def\La{\Lambda}
\def\ti{\tilde}
\def\lf{\left}
\def\rg{\right}

\def\al{\alpha}
\def\la{\lambda}

\def\ep{\varepsilon}
\def\ds{\displaystyle}
\def\ov{\overline}

\def\om{\omega}
\def\p{\partial}
\def\bn{\vec{n}}

\def\bbe{\vec{e}}

\def\bP{\vec{\Phi}}

\newcommand{\D}{D}

\newcommand{\Area}{Area}
\newcommand{\Sp}{\mathbb{S}}
\DeclareMathOperator{\diam}{diam}
\DeclareMathOperator{\dist}{dist}

\DeclareMathOperator{\genus}{genus}

\newcommand{\Rp}{\mathbb{R}^p}

\begin{document}
\title{Immersed Spheres of Finite Total Curvature into Manifolds.}
\author{Andrea Mondino\footnote{Scuola Normale Superiore, Piazza dei Cavalieri 7, 56126 Pisa, Italy, E-mail address: andrea.mondino@sns.it. 
A. Mondino is supported by a Post Doctoral Fellowship in the ERC grant ”Geometric Measure Theory in non Euclidean 
Spaces” directed by Prof. Ambrosio.} , Tristan Rivi\`ere\footnote{Department of Mathematics, ETH Zentrum,
CH-8093 Z\"urich, Switzerland.}}
%\date{ }
\maketitle

{\bf Abstract :}{\it We  prove that a sequence of possibly branched, weak immersions of the two-sphere $\Sp^2$ into an arbitrary compact
riemannian manifold $(M^m,h)$ with uniformly bounded area and uniformly bounded $L^2-$norm of the second fundamental form either collapse 
to a point or weakly converges as current, modulo extraction of a subsequence, to a Lipschitz mapping of $\Sp^2$ 
and whose image is made of a connected union of finitely many, possibly branched, weak immersions of $\Sp^2$ with finite total curvature.
We prove moreover that if the sequence belongs to a class $\gamma$ of $\pi_2(M^m)$ the limiting lipschitz mapping of $\Sp^2$ realizes this class as well. }

\medskip

\noindent{\bf Math. Class.} 30C70, 58E15, 58E30, 49Q10, 53A30, 35R01, 35J35, 35J48, 35J50.

\section{Introduction}

Througout the paper $(M^m,h)$ denotes a connected riemannian manifold and for any $x_0\in M^m$ we denote respectively by $\pi_2(M^m,x_0)$ the {\it homotopy groups of based maps} form $\Sp^2$
into $M^m$ sending the south pole to $x_0$  and by $\pi_0(C^0(\Sp^2,M^m))$ the  free homotopy classes. It is well known that the group $\pi_2(M^m,x_0)$ for different $x_0$'s are
isomorphic to each other and $\pi_2(M)$ denotes any of the $\pi_2(M^m,x_0)$ modulo isomorphisms.

Following the classical approach of Douglas and Rado for the Plateau Problem,  Sacks and Uhlenbeck proceeded to the minimization of the Dirichlet energy among mappings $\vec{\Phi}$ of the two sphere $\Sp^2$ into $M^m$
\[
E(\vec{\Phi})=\frac{1}{2}\int_{\Sp^2}|d\vec{\Phi}|_h^2\ dvol_{\Sp^2}
\]
within a fixed based homotopy class in $\pi_2(M^m,x_0)$ in order to generate area minimizing, possibly branched, immersed spheres realizing this homotopy class.

Though the paper had a major impact in mathematics (analysis, geometry, differential topology...etc) as being the funding work for the concentration compactness theory, the program of Sacks and Uhlenbeck
was only partially successful. Indeed the possible loss of compactness arising in the minimization process can generate a union of immersed spheres
realizing the corresponding \underbar{free homotopy class} but for which the underlying component in the \underbar{based homotopy group} $\pi_2(M^m,x_0)$ may have been forgotten.

For instance, assuming $\pi_1(M^m)=0$ in such a way that the  free homotopy classes $\pi_0(C^0(\Sp^2,M^m))$ identify
\footnote{ We recall that the free homotopy classes is in one to one correspondence with  the set of orbits of the action of $\pi_1(M^m)$ on $\pi_2(M^m,x_0)$.
} to the homotopy classes of $\pi_2(M^m)$,
one could consider the situation where two given distinct components $\gamma_1$ and $\gamma_2$ of $C^0(\Sp^2,M^m)$ possess absolute minimizer
of the area among conformal immersion of $\Sp^2$ which are  ``far apart'' in the sense that the union of the images of two such minimizers respectively in $\gamma_1$ and in $\gamma_2$  is never connected.
Considering now the component $\gamma_1+\gamma_2$ in $\pi_2(M^m)$, a  minimizing sequence for the energy in this component is likely to converge in the class of currents to the union of two disconnected
conformal immersions of $\Sp^2$ each realizing respectively $\gamma_1$ and $\gamma_2$. Such a disconnected limit is achieved though the minimizing sequence was made of connected objects.
It is expected that, from the mapping point of view, the limit is given by the two spheres connected by a minimizing geodesic which has been lost in the current convergence process.
It is very hard in the Sacks Uhlenbeck's approach to decide for which class this phenomenon indeed occurs and to identify the  classes $\gamma_1$, $\gamma_2$ which are realized
by minimal conformal immersions and to dissociate them from the somehow {\it not satisfying classes} $\gamma_1+\gamma_2$. At least Sacks and Uhlenbeck could
prove that the set of {\it satisfying classes} is generating the free homotopy group $\pi_2(M^m)$.

\begin{Th}\cite{SaU}
\label{th-I.1}
There exists a set of free homotopy classes $\Lambda_i\in \pi_0(C^0(\Sp^2,M^m))$ such that elements $\{\la\in\La_i\}$ generate $\pi_2(M^m)$ acted on by $\pi_1(M^m)$ and each 
$\Lambda_i$ contains a conformal branched immersion of a sphere having least area among maps of $\Sp^2$ into $M^m$ which lie in $\Lambda_i$.\hfill $\Box$
\end{Th}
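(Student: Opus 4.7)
The natural route, following Sacks--Uhlenbeck, is to regularize the Dirichlet energy so as to restore a Palais--Smale property that fails for $E$ itself. For $\al>1$ consider the perturbed functional
\[
E_\al(\bP):=\frac{1}{2}\int_{\Sp^2}\big(1+|d\bP|_h^2\big)^\al\,dvol_{\Sp^2}
\]
on the Banach manifold $W^{1,2\al}(\Sp^2,M^m)$. Because $2\al>2$, Morrey's embedding gives H\"older continuity of competitors, so the free homotopy class is a well-defined conserved quantity; moreover $E_\al$ is of class $C^1$, bounded below on each class, and satisfies the Palais--Smale condition on $W^{1,2\al}$. Standard direct-methods arguments then produce, in each free homotopy class $\La\in\pi_0(C^0(\Sp^2,M^m))$, a smooth minimizing critical point $\bP_\al\in\La$ of $E_\al$.

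\textbf{Limit $\al\to 1^+$ and bubbling.} The second step is to let $\al_k\downarrow 1$ and analyze the limit of $\bP_{\al_k}$. If, modulo the M\"obius group acting on $\Sp^2$, $\{\bP_{\al_k}\}$ is precompact in $W^{1,2}$, the limit $\bP_\infty$ is a harmonic map from $\Sp^2$ into $M^m$, still in $\La$, realizing the infimum of the Dirichlet energy (equivalently, by conformality of harmonic $\Sp^2$, the infimum of the area). Otherwise an $\e$-regularity theorem for $\al$-harmonic maps, uniform as $\al\to 1^+$, yields finitely many concentration points $\{p_j\}\subset\Sp^2$ at which, after rescaling by factors $\la_{k,j}\downarrow 0$ centered at suitable $x_{k,j}\to p_j$ and composing with stereographic projection, one extracts non-constant harmonic spheres $\vec{\omega}_j:\Sp^2\to M^m$ in the limit. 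Energy quantization $E(\vec{\omega}_j)\geq\e_0>0$ together with the uniform bound on $E_{\al_k}(\bP_{\al_k})$ ensures the number of bubbles is finite.

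\textbf{Homotopy decomposition and generation of $\pi_2$.} The body map $\bP_\infty$ and the bubbles $\vec{\omega}_j$ are all harmonic from $\Sp^2$ to $M^m$, hence conformal and (by Gulliver--Osserman--Royden) possibly branched minimal immersions. A standard connect-sum / gluing argument, based on the $C^0_{loc}$ convergence of $\bP_{\al_k}$ off the concentration points, expresses the free homotopy class $\La$ of $\bP_{\al_k}$ as the product, in $\pi_2(M^m,x_0)$ up to the $\pi_1(M^m)$-action, of the based classes $[\bP_\infty]$ and $[\vec{\omega}_j]$. Starting from an arbitrary $\gamma\in\pi_2(M^m)$ and inducting on $\inf_{\bP\in\gamma}\Area(\bP)$ through this decomposition (each bubble costs at least $\e_0$), one produces in finitely many steps a collection of free homotopy classes $\{\La_i\}$, each containing a minimizing harmonic-sphere representative, whose $\pi_1(M^m)$-orbits together reconstruct $\gamma$. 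Taking the union over all such $\gamma$ gives the sought generating set $\{\La_i\}$.

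\textbf{Main obstacle.} The decisive analytical point is the bubble extraction itself: establishing $\e$-regularity for critical points of $E_\al$ uniformly as $\al\downarrow 1$, identifying the correct concentration scales $\la_{k,j}$, and showing that the rescaled limits are genuinely non-constant harmonic $\Sp^2$'s whose homotopy classes add up to $\La$. The interplay between the $\al$-harmonic-map PDE away from the critical Sobolev exponent and the conformal invariance at $\al=1$ is what makes this analysis delicate, and it is precisely the technical heart of the Sacks--Uhlenbeck paper.
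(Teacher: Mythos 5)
You should first note that the paper you are comparing against does not actually prove Theorem~\ref{th-I.1}: it is quoted from Sacks--Uhlenbeck \cite{SaU} as background (the box at the end of the statement marks it as an imported result), and the whole point of the present paper is to pursue a \emph{different} variational problem. Your sketch is, in outline, precisely the original \cite{SaU} argument: the $\al$-energy $E_\al$ on $W^{1,2\al}(\Sp^2,M^m)$, Palais--Smale and minimization in each component, $\e$-regularity uniform as $\al\downarrow 1$, bubble extraction with the energy gap $\e_0$, and induction on the area infimum to produce a generating family of free classes each containing a minimizing (hence conformal, possibly branched, minimal) sphere. As an outline of \cite{SaU} this is correct; it is genuinely different from the route of the present paper, which abandons the Dirichlet energy and instead minimizes $L=A+W$ among weak branched immersions in ${\mathcal F}_{\Sp^2}$ (Theorem~\ref{th-I.2}, then \cite{MoRi}), exactly because the Sacks--Uhlenbeck scheme cannot realize a \emph{prescribed} class.

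One step in your sketch is asserted more strongly than the analysis delivers, and more strongly than the theorem claims: you say the free class $\La$ of $\bP_{\al_k}$ decomposes as the product of the \emph{based} classes $[\bP_\infty]$ and $[\vec{\omega}_j]$. In the $\al\downarrow 1$ limit no neck control or energy-identity statement is available (and none is proved in \cite{SaU}); the connecting necks degenerate to paths whose homotopy classes are lost, so the decomposition is only controlled modulo the action of $\pi_1(M^m)$ on $\pi_2(M^m,x_0)$, i.e.\ at the level of free homotopy classes. This is exactly why the conclusion of Theorem~\ref{th-I.1} reads ``generate $\pi_2(M^m)$ acted on by $\pi_1(M^m)$'' rather than asserting that each based class is realized, and it is the defect, described in the introduction, that motivates the weak-closure theory for bubble trees developed in this paper. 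If you reformulate your gluing step as: the class of $\bP_{\al_k}$ lies, up to the $\pi_1$-action, in the subgroup generated by the classes of the body map and of maps concentrated near the blow-up points (each with area infimum smaller by at least $\e_0$), the induction closes and your argument matches \cite{SaU}.
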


\medskip

In order to remedy to the difficulty to identify which class in $\pi_2(M^m)$ is realized by branched minimal immersions, in the present work  we 
generate an alternative representative when the given class is not achieved. To that aim we will consider the minimization of the following energy
\[
L(\vec{\Phi}):=\int_{\Sp^2}\lf[1+|\vec{H}_{\vec{\Phi}}|_h^2\rg]\ dvol_{g}\quad,
\]
among possibly branched immersions $\vec{\Phi}$, where $dvol_{g}$ denotes the volume form associated to the induced metric $g:=\vec{\Phi}^\ast h$ by $\vec{\Phi}$ on 
$\Sp^2$ and $\vec{H}_{\vec{\Phi}}$ is the mean curvature vector associated to the immersion.

\medskip

Observe that this energy we propose to minimize is the sum of the area and the so called {\it Willmore energy} of the immersion :
\[
L(\vec{\Phi}):=A(\vec{\Phi})+W(\vec{\Phi})\quad.
\]
Minimizing this energy is a natural generalization of Sacks Uhlenbeck's procedure in the sense that, if a class  $\gamma$ in $\pi_2(M^m)$ possesses an area minimizing
immersion $\vec{\Phi}$ then $\vec{H}_{\vec{\Phi}}\equiv 0$ and thus $\vec{\Phi}$ has also to minimize $L$ in its homotopy class. Moreover, as we will see below in Theorem~\ref{th-I.2}
and in the subsequent work \cite{MoRi}, the minimization procedure of $L$ has the advantage to always provide a smooth
representative of any chosen class of $\pi_2(M^m)$.

\medskip

Minimizing $L$ among smooth immersions is of course a-priori an ill posed variational problem exactly like minimizing the Dirichlet energy $E$ right away
among $C^1$ maps has little chance of success. In \cite{Ri3} (see also \cite{Ri1}), the second author introduced a suitable setting for dealing with minimization problems
whose highest order term is given by the Willmore energy. We now recall the notion of {\it weak branched immersions with finite total curvature}.

\medskip

By virtue of Nash theorem we can always assume that $M^m$ is isometrically embedded in some euclidian space ${\R}^n$. We first define the Sobolev spaces
from $\Sp^2$ into $M^m$ as follows, for any $k\in {\N}$ and $1\le p\le\infty$
\[
W^{k,p}(\Sp^2,M^m):=\lf\{u\in W^{k,p}(\Sp^2,{\R}^n)\ \mbox{ s. t. }\ u(x)\in M^m\ \mbox{ for a.e. }x\in \Sp^2\rg\}\quad.
\]
We introduce the space of {\it possibly branched lipschitz immersions} : A map $\vec{\Phi}\in W^{1,\infty}(\Sp^2,M^m)$ is a 
{\it possibly branched lipschitz immersion} if 
\begin{itemize}
\item[i)] there exists $C>1$ such that, for a.e. $x\in \Sp^2$,
\be
\label{I.1}
 C^{-1}|d\vec{\Phi}|^2(x)\le |d\vec{\Phi}\wedge d\vec{\Phi}|(x)\le |d\vec{\Phi}|^2(x) \quad,
\ee
where the norms of the different tensors have been taken with respect to the standard metric on $\Sp^2$ and with respect to the metric $h$ on $M^m$ and
where $d\vec{\Phi}\wedge d\vec{\Phi}$ is the tensor given in local coordinates on $\Sp^2$ by $$d\vec{\Phi}\wedge d\vec{\Phi}:=2\ \p_{x_1}\vec{\Phi}\wedge\p_{x_2}\vec{\Phi}\ dx_1\wedge dx_2\in \wedge^2T^\ast \Sp^2\otimes \wedge^2 T_{\vec{\Phi}(x)}M^m \quad.$$
\item[ii)] There exists at most finitely many points $\{a_1\cdots a_N\}$ such that for any compact $K\subset \Sp^2\setminus \{a_1\cdots a_N\}$
\be
\label{I.2}
\essinf_{x\in K}|d\vec{\Phi}|(x)>0 \quad .
\ee
\end{itemize}
For any {\it possibly branched lipschitz immersion} we can define almost everywhere the {\it Gauss map} 
\[
\vec{n}_{\vec{\Phi}}:=\star_h\frac{\p_{x_1}\vec{\Phi}\wedge\p_{x_2}\vec{\Phi}}{|\p_{x_1}\vec{\Phi}\wedge\p_{x_2}\vec{\Phi}|}\ \in\ \wedge^{m-2} T_{\vec{\Phi}(x)}M^m \quad,
\]
where $(x_1,x_2)$ is a local arbitrary choice of coordinates on $\Sp^2$ and $\star_h$ is the standard Hodge operator associated to the metric  $h$ on multi-vectors in $TM$.

With these notations we introduce the following definition.
\begin{Dfi}
\label{df-I.1}
A lipschitz map $\vec{\Phi}\in  W^{1,\infty}(\Sp^2,M^m)$ is called ``weak, possibly branched, immersion'' if $\vec{\Phi}$ satisfies
(\ref{I.1}) for some $C\ge 1$, if it satisfies (\ref{I.2}) and if the Gauss map satisfies
\be
\label{I.3}
\int_{\Sp^2}|D\vec{n}_{\vec{\Phi}}|^2\ dvol_g<+\infty\quad,
\ee
where $dvol_g$ is the volume form associated to $g:=\vec{\Phi}^\ast h$ the pull-back metric of $h$ by $\vec{\Phi}$ on $\Sp^2$, $D$ denotes the covariant derivative with respect to $h$ and the norm $|D\vec{n}_{\vec{\Phi}}|$ of the tensor $D\vec{n}_{\vec{\Phi}}$ is taken with respect
to $g$ on $T^\ast \Sp^2$ and $h$ on $\wedge^{m-2}TM$.
The space of ``weak, possibly branched, immersions'' of $\Sp^2$ into $M^m$ is denoted by ${\mathcal F}_{\Sp^2}$. \hfill $\Box$
\end{Dfi}
For $\vec{\Phi}\in{\mathcal F}_{\Sp^2}$ we denote 
\[
F(\vec{\Phi})=\frac{1}{2}\int_{\Sp^2}|D\vec{n}_{\vec{\Phi}}|^2\ dvol_{g}
\]
and
\[
G(\vec{\Phi}):=A(\vec{\Phi})+F(\vec{\Phi})=\int_{\Sp^2}\lf[1+\frac{|D\vec{n}_{\vec{\Phi}}|^2}{2}\rg]\ dvol_{g}\quad.
\]
Using M\"uller-Sv\v{e}r\'ak theory of weak isothermic charts (see \cite{MS}) and H\'elein's moving frame technique  (see \cite{He}) one can prove the following proposition (see \cite{Ri1})
\begin{Prop}
\label{pr-I.1}
Let $\vec{\Phi}$ be a  weak, possibly branched, immersion of $\Sp^2$ into $M^m$ in ${\mathcal F}_{\Sp^2}$. Then there exists a bilipschitz homeomorphism $\Psi$ of $\Sp^2$
such that $\vec{\Phi}\circ\Psi$ is weakly conformal : it satisfies almost everywhere on $\Sp^2$
\[
\lf\{
\begin{array}{l}
\ds |\p_{x_1}(\vec{\Phi}\circ\Psi)|_h^2=|\p_{x_2}(\vec{\Phi}\circ\Psi)|_h^2\quad\\[5mm]
\ds h(\p_{x_1}(\vec{\Phi}\circ\Psi),\p_{x_2}(\vec{\Phi}\circ\Psi))=0 \quad,
\end{array}
\rg.
\]
where $(x_1,x_2)$ are local arbitrary conformal coordinates in $\Sp^2$ for the standard metric. Moreover $\vec{\Phi}\circ\Psi$ is in $W^{2,2}\cap W^{1,\infty}(\Sp^2,M^m)$.\hfill $\Box$
\end{Prop}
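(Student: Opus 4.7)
The plan is to construct $\Psi$ in two conceptual stages: first solve a Beltrami-type equation to put the pulled-back metric in conformal form, then upgrade the regularity of the composition once elliptic estimates for the conformal factor become available.

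\textbf{Stage 1: Measurable Riemann mapping.} Condition (\ref{I.1}) says that the induced (possibly degenerate) metric $g=\vec\Phi^\ast h$ is, in any local chart, a symmetric nonnegative matrix whose eigenvalues have uniformly bounded ratio away from the branch points. In particular it defines a Beltrami coefficient $\mu$ with $\|\mu\|_\infty\le k<1$. Covering $\Sp^2$ with a pair of isothermal charts for the round metric and invoking the Ahlfors--Bers measurable Riemann mapping theorem, I obtain a global quasiconformal homeomorphism $\Psi$ of $\Sp^2$ such that $\Psi^\ast g$ is conformal to the standard round metric, which is exactly the weak conformality condition in the statement. At this stage $\Psi$ is only known to be quasiconformal, hence H\"older and in $W^{1,2}$; the $W^{1,\infty}$ bound on $\vec\Phi$ only yields an a priori upper bound on the conformal factor $\lambda$ defined by $e^{2\lambda}:=|\p_{x_1}(\vec\Phi\circ\Psi)|_h^2$.

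\textbf{Stage 2: $L^\infty$ control of $\lambda$.} This is the crux. On each simply connected sub-chart of $\Sp^2\setminus\{a_1,\dots,a_N\}$ I apply H\'elein's moving frame method to construct a Coulomb orthonormal tangent frame $(\vec e_1,\vec e_2)$ along $\vec\Phi\circ\Psi$; its connection one-form is divergence-free and in $L^2$ thanks to assumption (\ref{I.3}). The compatibility/Gauss equations then express $-\Delta\lambda$ as the sum of a Jacobian-type term in $\vec e_1,\vec e_2$ and an extrinsic curvature correction, both controlled in the local Hardy space by $\int |D\vec n_{\vec\Phi}|^2\,dvol_g$. A Wente / M\"uller--\v Sver\'ak estimate converts this into an $L^\infty_{\mathrm{loc}}$ bound on $\lambda$ on $\Sp^2\setminus\{a_1,\dots,a_N\}$. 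At each $a_i$ I then use the standard removable-singularity analysis for Liouville-type equations, available under bounded area and bounded total curvature, to extend the bound across the branch point (modulo the expected integer density).

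\textbf{Stage 3: Bilipschitz character of $\Psi$ and $W^{2,2}$ regularity.} With $\lambda\in L^\infty$ together with $|d\vec\Phi|$ bounded above and, thanks to (\ref{I.1})--(\ref{I.2}), bounded below away from branch points, the identity $|d(\vec\Phi\circ\Psi)|^2=e^{2\lambda}$ forces $|d\Psi|$ and $|d\Psi^{-1}|$ to be essentially bounded, so $\Psi$ is bilipschitz. In the conformal chart we have the pointwise a.e.\ identity
\[
\Delta(\vec\Phi\circ\Psi)=2\,e^{2\lambda}\,\vec H_{\vec\Phi\circ\Psi}+A(\vec\Phi\circ\Psi)(d\vec\Phi,d\vec\Phi)^{\mathrm{tan}},
\]
where $A$ is the second fundamental form of $M^m\hookrightarrow\R^n$; since $e^{2\lambda}\in L^\infty$ and $\int e^{2\lambda}|\vec H|^2<\infty$, each term on the right lies in $L^2$, hence $\vec\Phi\circ\Psi\in W^{2,2}\cap W^{1,\infty}(\Sp^2,M^m)$.

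The main obstacle is clearly Stage 2: the passage from the $L^2$ bound on the Gauss map to an $L^\infty$ bound on $\lambda$. Everything else is essentially bookkeeping around the Beltrami equation and elliptic regularity once $\lambda$ is controlled; the Coulomb frame plus Wente step, together with the delicate treatment of the finitely many branch points, is where the genuine analytic work lies and where the hypotheses (\ref{I.1})--(\ref{I.3}) are used in an essential way.
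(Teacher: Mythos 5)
You should first note that the paper itself contains no proof of Proposition~\ref{pr-I.1}: it is quoted from \cite{Ri1}, with the method indicated being M\"uller--\v{S}ver\'ak weak isothermic charts combined with H\'elein's moving frames. Your Stages 1--2 (Beltrami equation / measurable Riemann mapping to produce the weakly conformal reparametrization, then Coulomb frames plus Wente to control the conformal factor) are exactly that toolbox, so at the level of strategy you are aligned with the cited route.

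The genuine gap is at the finitely many degenerate points, i.e.\ precisely in the ``possibly branched'' part of the statement. Your Stage 2 cannot yield $\lambda\in L^\infty$ across the points $a_i$: by the paper's own Lemma~\ref{lem:branched} (see \eqref{eq:branched} and \eqref{syst:ConfFacBP}) the conformal factor of the conformal reparametrization necessarily behaves like $(n_i-1)\log|z|$ there, so the premise ``with $\lambda\in L^\infty$'' opening your Stage 3 is false exactly where it matters (your own caveat ``modulo the expected integer density'' concedes this). Consequently the identity relating $e^{\lambda}$, $|d\vec{\Phi}|$ and $|d\Psi|$, combined with the lower bound on $|d\vec{\Phi}|$ --- which \eqref{I.2} only provides on compacts avoiding the $a_i$ --- gives Lipschitz bounds on $\Psi$ and $\Psi^{-1}$ only away from the degenerate points, with constants that a priori blow up as one approaches them; the \emph{global} bilipschitz character of $\Psi$, which is part of the statement and is what the rest of the paper uses, is never established. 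This is not bookkeeping: near such a point the rate at which $|d\vec{\Phi}|$ degenerates in the given parametrization is a priori unrelated to the integer rate $|y|^{n-1}$ of the conformal factor of $\vec{\Phi}\circ\Psi$. For instance the angle-doubling local model $\vec{\Phi}(re^{i\theta})=re^{2i\theta}$ is Lipschitz, satisfies \eqref{I.1}, \eqref{I.2} (with no degenerate points at all) and \eqref{I.3} (flat image), yet any homeomorphism $\Psi$ making it weakly conformal is, up to conformal maps, of the form $y\mapsto h(y)|h(y)|$ with $h$ conformal, hence behaves quadratically at the preimage of the singular point and is not bilipschitz. So the pointwise ratio argument of your Stage 3 cannot close the argument near branch points; whatever proof one gives must confront the branch-point asymptotics of Lemma~\ref{lem:branched} and the structure of the class ${\mathcal F}_{\Sp^2}$ directly, and this is exactly the delicate content of the result cited from \cite{Ri1} that your proposal leaves untouched. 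Two smaller points: H\'elein's frame construction requires the below-threshold energy $8\pi/3$, so Stage 2 must first pass to a cover by small discs; and Wente controls $\lambda$ only up to an additive constant plus a harmonic part, whose control (the $L^{2,\infty}$ gradient estimate of Lemma~\ref{lem:EstCFBranch}, or a comparison with $\log|d\vec{\Phi}|$ on such discs) should be made explicit before asserting an $L^\infty_{loc}$ bound.
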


\medskip

\begin{Rm}
\label{rm-I.1}
In view of  Proposition~\ref{pr-I.1}, a careful reader could wonder why we do not work  with conformal $W^{2,2}$ weak, possibly branched, immersions only and why we
do not impose for the membership in ${\mathcal F}_{\Sp^2}$, $\vec{\Phi}$ to be conformal from the beginning. The reason why this would be a wrong strategy and why
we have to keep the flexibility
for weak immersions not to be necessarily conformal  will appear clearly
in the second paper \cite{MoRi} while studying the variations of $L$ and while considering general perturbations which do not have to respect
infinitesimally the conformal condition. \hfill $\Box$ 
\end{Rm}

\medskip

\medskip

In the sequel we shall denote by ${\mathcal M}^+(\Sp^2)$ the non-compact M\"obius group of positive conformal diffeomorphisms of the 2-sphere $\Sp^2$.
Our main result in the present work is the following weak-semi-closure result of ${\mathcal F}_{\Sp^2}$. 

\begin{Th}
\label{th-I.2}
Let $\vec{\Phi}_k$ be a sequence of  weak, possibly branched, conformal immersions of ${\mathcal F}_{\Sp^2}$ such that
\be
\label{I.4}
\limsup_{k\rightarrow +\infty}\int_{\Sp^2}\lf[1+|D\vec{n}_{\vec{\Phi}_k}|_h^2\rg]\ dvol_{g_k}<+\infty\quad,
\ee
where $dvol_{g_k}$ denotes the volume form associated to the induced metric $g_k:=\vec{\Phi}_k^\ast h$ by $\vec{\Phi}_k$ on 
$\Sp^2$ and $D\vec{n}_{\vec{\Phi}_k}$ is the covariant derivative in $(M,h)$ of the normal space $\vec{n}_{\vec{\Phi}_k}$ to $\vec{\Phi}_k$. Assume moreover that
\be
\label{I.4a}
\liminf_k\, diam(\vec{\Phi}_k(\Sp^2))>0\quad.
\ee
 Then there exists
a subsequence that we still denote $\vec{\Phi}_k$, there exists a family of bilipschitz homeomorphisms $\Psi_k$, there exists 
a finite family of sequences $(f^i_k)_{i=1\cdots N}$ of elements in ${\mathcal M}^+(\Sp^2)$,
there exists a finite family of natural integers $(N^i)_{i=1\cdots N}$ and for each $i\in\{1\cdots N\}$ there exists finitely many points of $\Sp^2$, $b^{i,1}\cdots b^{i,N^i}$ such that
\be
\label{I.5}
\vec{\Phi}_k\circ\Psi_k\longrightarrow \vec{f}_\infty\quad\mbox{ strongly in }C^0(\Sp^2,M^m)\quad,
\ee
where $\vec{f}_\infty\in W^{1,\infty}(\Sp^2,M^m)$, moreover
\be
\label{I.6}
\vec{\Phi}_k\circ f^i_k\rightharpoonup \vec{\xi}_\infty^i\quad\mbox{ weakly in } W^{2,2}_{loc}(\Sp^2\setminus\{b^{i,1}\cdots b_{i,N^i}\})\quad,
\ee
where $\vec{\xi}_\infty^j\in{\mathcal F}_{\Sp^2}$ is conformal.  In addition we have 
\be
\label{I.7}
\ds\vec{f}_\infty(\Sp^2)=\bigcup_{i=1}^N\vec{\xi}_\infty^i(\Sp^2)\quad,
\ee
moreover
\be
\label{I.6b}
Area(\vec{\Phi}_k)=\int_{\Sp^2}1\ dvol_{g_{\vec{\Phi}_k}}\longrightarrow Area(\vec{f}_\infty)=\sum_{i=1}^N Area(\vec{\xi}_\infty^i)\quad,
\ee
and finally
\be
\label{I.8}
(\vec{f}_\infty)_\ast[\Sp^2]=\sum_{i=1}^N(\vec{\xi}_\infty^i)_\ast[\Sp^2]\quad,
\ee
where for any Lipschitz mapping $\vec{a}$ from $\Sp^2$ into $M^m$, $(\vec{a})_\ast[\Sp^2]$ denotes the current
given by the push-forward by $\vec{a}$ of the current of integration over $\Sp^2$ : for any smooth two-form
$\om$ on $M^m$
\[
\lf<(\vec{a})_\ast[\Sp^2],\om\rg>:=\int_{\Sp^2}(\vec{a})^\ast\om\quad.
\] 
\hfill $\Box$
\end{Th}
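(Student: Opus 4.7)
The overall plan is a bubble-tree decomposition in the spirit of Sacks--Uhlenbeck, but driven by the conformally invariant quantity $\int|D\vec{n}|^2\,dvol_g$ rather than the Dirichlet energy, and carried out in the weak framework $\mathcal{F}_{\Sp^2}$. By Proposition~\ref{pr-I.1} I may assume each $\vec{\Phi}_k$ is weakly conformal and of class $W^{2,2}\cap W^{1,\infty}$. The engine is an $\varepsilon$-regularity statement of H\'elein--M\"uller--\v{S}ver\'ak type (adapted to the target $(M^m,h)$ via the Nash embedding $M^m\hookrightarrow\R^n$): there exists $\varepsilon_0>0$ such that on any disc $D\subset\Sp^2$ with $\int_D |D\vec{n}_{\vec{\Phi}_k}|^2\,dvol_{g_k}<\varepsilon_0$, after choosing an isothermic chart one controls $\|\log|d\vec{\Phi}_k|\|_{L^\infty}$ and $\|\vec{\Phi}_k\|_{W^{2,2}}$ on any sub-disc. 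This is what turns the bound (\ref{I.4}) into genuine pre-compactness outside a finite energy-concentration set.

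\medskip

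\noindent\emph{Extraction of the bubbles.} I first pick a M\"obius normalization $f^1_k\in{\mathcal M}^+(\Sp^2)$ adapted to the conformal class of $\vec{\Phi}_k$, for instance by a three-point normalization that prevents the $\vec{\Phi}_k\circ f^1_k$ from conformally concentrating artificially; the diameter hypothesis (\ref{I.4a}) is exactly what ensures that the resulting principal parametrization is non-collapsing. The concentration set $\{b^{1,1},\dots,b^{1,N^1}\}$ is the finite set of points where
$\liminf_{\rho\to 0}\liminf_k\int_{D_\rho(b)}|D\vec{n}_{\vec{\Phi}_k\circ f^1_k}|^2\,dvol_{g_k}\ge\varepsilon_0$; finiteness follows from the $\varepsilon_0$-quantization of the total energy. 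Off these points, the $\varepsilon$-regularity yields a weak $W^{2,2}_{loc}$ limit $\vec{\xi}_\infty^1$, which satisfies (\ref{I.1})--(\ref{I.3}) and so belongs to $\mathcal{F}_{\Sp^2}$. At each $b^{1,j}$ I then apply a further M\"obius transformation sending the concentration point to a fixed base point and dilating so that the local energy becomes of order $\varepsilon_0/2$, extracting a secondary bubble $\vec{\xi}_\infty^i$ with its own (possibly empty) concentration set. The procedure terminates after finitely many steps, giving the families $f^i_k$ and $\vec{\xi}_\infty^i$ required by (\ref{I.6}).

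\medskip

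\noindent\emph{Neck analysis, $C^0$ convergence and currents.} Between any two consecutive bubbles sits an annular neck of diverging conformal modulus. The crucial point is the \emph{no-neck property}: combining the $\varepsilon$-regularity on long flat cylinders with H\'elein's moving-frame estimates and a Simon-type decay argument, one obtains that $\mathrm{osc}\,\vec{\Phi}_k$ on each neck tends to $0$ in $M^m$ and that both the area and the conformal energy of the neck vanish. This is what gives the area additivity (\ref{I.6b}) and identifies the image $\vec{f}_\infty(\Sp^2)$ as the union of the bubble images (\ref{I.7}). I construct $\Psi_k$ by gluing $f^1_k$ on the complement of small discs around the $b^{1,j}$ with rescaled copies of the sub-bubble maps $f^i_k$ on those discs, via bilipschitz interpolation on the collapsing annuli; the strong $C^0$ convergence (\ref{I.5}) to a Lipschitz $\vec{f}_\infty$ then follows from $W^{2,2}\hookrightarrow C^0$ in two dimensions on each bubble region together with the neck collapse. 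The current identity (\ref{I.8}) is a consequence of the strong $W^{1,2}$-convergence of the Jacobians on bubble regions and the vanishing $2$-mass of the neck pieces in the limit, so that no residual $2$-cycle survives outside the bubbles.

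\medskip

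\noindent\emph{Main obstacle.} The delicate step is the no-neck analysis in the curved ambient manifold. In the flat Euclidean case this kind of estimate is by now standard, but here one must carry the conformal $W^{2,2}$ estimates through the Nash embedding while controlling the nonlinear curvature terms from $h$, and one must simultaneously accommodate possible branch points of the $\vec{\xi}_\infty^i$ that may coincide with the centers of the M\"obius rescalings. Upgrading the conclusion from vanishing area to vanishing $2$-mass, which is what is really needed for (\ref{I.8}) rather than just (\ref{I.6b}) and (\ref{I.7}), is the most technical component and is where the bulk of the work should be expected.
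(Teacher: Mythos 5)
There is a genuine gap, and it sits exactly where you wave your hands: the claim that on a disc of small energy the $\varepsilon$-regularity ``controls $\|\log|d\vec{\Phi}_k|\|_{L^\infty}$''. The H\'elein/M\"uller--\v{S}ver\'ak moving-frame estimates control the conformal factor only \emph{up to an additive constant} $\bar\lambda_k$ (this is the content of Proposition~\ref{Prop:ConcComp} and of estimate \eqref{eq:3.22}), and this constant may very well diverge to $-\infty$ even when no energy concentrates, i.e.\ the parametrized surface can collapse locally or globally in the image while all local energies stay below $\varepsilon_0$. Because the functional here is $1+|D\vec{n}|^2$ and not the Dirichlet energy, your subsequent bubble extraction ``rescale so that the local energy becomes $\varepsilon_0/2$'' does not produce a nontrivial limit either: an energy-normalized rescaling can still converge to a constant map. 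Ruling out the collapsing alternative is the main analytic content of the paper, and it is not a consequence of a three-point normalization of the M\"obius group: it is the ``good gauge extraction'' Lemma~\ref{lm-IV.1}, whose proof is a nontrivial contradiction argument using the diameter lower bound together with the inequality $4\pi\le\int|\vec H|^2\,d\mu_g+2\,{\cal H}^1(\vec\Phi(\partial\Sigma))/d(\vec\Phi(\partial\Sigma),\vec\Phi(\Sigma))$ of \cite{Ri4} to isolate, for every $n$, $n-1$ disjoint regions each carrying Willmore energy at least $4\pi-\frac{1}{n-1}$, contradicting \eqref{I.4}. You assert the conclusion of that lemma (``the diameter hypothesis is exactly what ensures non-collapsing'') without any argument, so the centerpiece of the proof is missing.

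A second, related omission is the mechanism by which one iterates into the concentration regions. In the paper this requires the ``cutting and filling'' Lemma~\ref{lm-V.2} (replacing the surface across a low-energy neck annulus by a controlled almost-flat cap, via Wente estimates, a biharmonic gluing and a Beltrami/quasi-conformal correction to restore conformality) and the ``diameter tracking'' Lemma~\ref{lm-V.1}, precisely so that the good-gauge extraction can be applied to the renormalized immersions and so that only concentration points whose image keeps positive diameter generate new bubbles; the remaining regions are shown to have vanishing diameter \emph{and} vanishing area through L.~Simon's monotonicity formula applied to the filled immersions (this is how \eqref{I.6b}, \eqref{I.7} and \eqref{I.8} are obtained, not through a neck oscillation-decay estimate, which the paper neither proves nor needs). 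Termination of the induction comes from the lower bound $\inf_{\mathcal F_{\Sp^2}}L=c_0>0$ per bubble, not from an $\varepsilon_0$-quantization of $\int|D\vec n|^2$. So while your overall bubble-tree outline is the right shape, the two load-bearing ingredients --- the gauge choice defeating the collapse of the conformal factor, and the cutting-and-filling/diameter-tracking device that makes the iteration and the area/current identities work --- are absent from your proposal.
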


In \cite{MoRi}, for any given homotopy class $\gamma$ of $\pi_2(M^m,x_0)$, we will construct such an $\vec{f}_\infty$ representing $\gamma$
and for which the associated $\vec{\xi}_\infty^j$ are 
smooth, possibly branched, conformal immersions satisfying the {\it area constrained Willmore equation}.
 We have then succeeded in realizing each \underbar{based} homotopy class with a smooth  mapping of $\Sp^2$ whose image equals
 a connected union of smooth {\it area constrained Willmore}, possibly branched, conformal immersions of $\Sp^2$.
 
 \medskip
 
 The paper is organized as follows : in section 2 we establish uniform controls of the number of branched points as well
 as a uniform control of the $L^{2,\infty}-$gradient of the conformal factor. In section 3 we establish a concentration compactness result for weak branched conformal immersions
 from an arbitrary riemann surface into a closed riemannian manifold. In section 4 we present a way to ``normalize'' the parametrization of sequences of conformal
 weak immersions with uniformly bounded $L$ energy and non shrinking diameter in order to converges to a non trivial conformal immersion of $\Sp^2$. In section 5 we develop
 a procedure in order to decompose $\Sp^2$ into subdomains which, modulo renormalization, will converge to non trivial conformal immersions of $\Sp^2$ and which exhaust
 completely the image. In section 6 we prove the main theorem~\ref{th-I.2} and finally in the last section we will prove the extension 
 of the weak closure theorem~\ref{th-I.2} to bubble trees of elements in ${\mathcal F}_{\Sp^2}$. This last result will be the starting point to
 the proof of the realization of homotopy classes by bubble trees of Willmore spheres in \cite{MoRi}.
 
 \medskip
 
 \noindent {\bf Acknowledgments.} This work was written while the first author was visiting the {\it Forschungsinstitut f\"ur Mathematik} at the ETH Z\"urich. He would like
 to thank the Institut for the hospitality and the excellent working conditions.

\section{Branched points control and $L^{2,\infty}-$estimates of the gradient of the conformal factor in conformal parametrization.}
\reset 
Let $(M^m,h)$ be an $m$-dimensional Riemannian manifold and  $(\Sigma, c_o)$ a smooth, closed Riemann surface; without loss of generality we can assume that $(\Sigma, c_o)$ has a metric with constant curvature and unit area (see for example \cite{Jo}). First of all we want to define the set of  branched conformal immersions with finite total curvature of $(\Sigma, c_0)$ into $(M^m,h)$.
Consider a map $\vec{\Phi}\in W^{1,\infty}(\Sigma, (M,h))$ and consider the following conditions:

1) Conformality: almost everywhere on $\Sigma$ 
\be\label{Cond:Conf}
\lf\{
\begin{array}{l}
|\p_{x_1} \vec{\Phi}|^2_h=|\p_{x_2} \vec{\Phi}|^2_h\\[5mm]
h(\p_{x_1} \vec{\Phi},\p_{x_2} \vec{\Phi})=0 \quad .
\end{array}
\rg.
\ee

2) Finite number of singular points: There exists at most finitely many points $\{b_1\cdots b_{N_{\bP}}\}$ such that for any compact $K\subset \Sp^2\setminus \{b_1\cdots b_{N_{\bP}}\}$
\be
\label{Cond:FNSP}
\essinf_{x\in K}|d\vec{\Phi}|(x)>0 \quad .
\ee
Observe that on $\Sigma\setminus\{b_1,\ldots,b_{N_{\bP}}\}$ we can define the normal space $\bn\in \Gamma(\vec{\Phi}^{-1}(TM))$
as
\be\label{eq:defn}
\bn_{\bP}:=\star_h \frac{\p_{x_1} \vec{\Phi} \wedge \p_{x_2} \vec{\Phi}} {|\p_{x_1} \vec{\Phi} \wedge \p_{x_2} \vec{\Phi}|},
\ee  
so $\bn$ is defined almost everywhere and is an $L^\infty$ vector field on the whole $\Sigma$.

3) Finite total curvature, that is we ask that $\vec{n}$ is a $W^{1,2}$  $m-2$-vector field
\be\label{Cond:FTC}
\int_{\Sigma} |\D \bn_{\bP}|^2 dx < \infty.
\ee
Notice that by the invariance of the integrand with respect to conformal changes of metric on the surface $\Sigma$, it doesn't matter which representant in the conformal class we choose to compute \eqref{Cond:FTC}.

A map $\vec{\Phi}\in W^{1,\infty}((\Sigma,c_0), (M,h))$ which satisfies \eqref{Cond:Conf}, \eqref{Cond:FNSP}, \eqref{Cond:FTC} is called \emph{weak branched conformal immersion with finite total curvature} and we denote with
\be\label{def:FcS}
{\cal F}^{c_0}_{\Sigma}:= \{\vec{\Phi}\in W^{1,\infty}((\Sigma,c_0), (M,h)) \text{ satisfying  \eqref{Cond:Conf}, \eqref{Cond:FNSP} and \eqref{Cond:FTC} }\}.  
\ee

\subsection{Behaviour at singular points}\label{SubSec:SingPoints}

First of all let us recall the following lemma proved by the second author (see Lemma A.5 in \cite{Ri3}. This lemma was originally proved in \cite{MS} using quite different arguments.
We used instead in \cite{Ri3} the {\it  moving frame approach} of F. H\'elein - see \cite{He}). Before stating it let us recall that $Gr_{p-2}(\R^p)$ denotes the Grassmannian of $p-2$-dimensional linear subspaces of $\R^p$.

\begin{Lm}\label{lem:branched}
Let $\vec{\Phi}$ be a conformal immersion of $\D^2\setminus\{0\}$ into $\R^p$ in $W^{2,2}_{loc}(\D^2\setminus \{0\}, \R^p)$ and such that the conformal factor $\log |\nabla \vec{\Phi}| \in L^{\infty}_{loc}(\D^2\setminus \{0\})$. Assume $\bP$ extends to a map in $W^{1,2}(\D^2)$ and that the corresponding Gauss map $\bn_{\bP}$ also extends to a map in $W^{1,2}(\D^2, Gr_{p-2}(\R^p))$. Then $\bP$ realizes a lipschitz conformal immersion of the whole disc $\D^2$ and there exists a positive integer $n$ and a constant $C$ such that
\be\label{eq:branched}
(C-o(1)) |z|^{n-1} \leq \lf|\frac{\p \bP}{\p z} \rg| \leq (C+o(1)) |z|^{n-1}.
\ee
\end{Lm}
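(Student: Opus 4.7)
\medskip

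\noindent\textbf{Proof plan.}  The strategy is to reduce the question to the study of the Liouville equation for the conformal factor and to invoke Hélein's moving-frame machinery together with a removability / integer-degree argument.

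First I would record the main PDE. Put $\lambda:=\log|\partial_{x_1}\vec{\Phi}|=\log|\nabla\vec{\Phi}|-\tfrac12\log 2$; by hypothesis $\lambda\in L^\infty_{loc}(\D^2\setminus\{0\})$ and $\vec{\Phi}\in W^{2,2}_{loc}(\D^2\setminus\{0\})$, so on the punctured disc the Gauss equation in conformal coordinates yields
\[
-\Delta\lambda \;=\; K_g\,e^{2\lambda}\qquad\text{in }\D^2\setminus\{0\},
\]
where $K_g$ is the Gauss curvature of the induced metric $g=e^{2\lambda}|dz|^2$. Because $|D\vec n_{\vec\Phi}|^2\,dvol_g$ is conformally invariant and $\bn_{\vec{\Phi}}\in W^{1,2}(\D^2)$, the Gauss equation for submanifolds of $\R^p$ gives $\|K_g\,e^{2\lambda}\|_{L^1(\D^2)}<\infty$.

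Next I would apply Hélein's Coulomb moving-frame construction to the Gauss map $\bn_{\vec{\Phi}}\in W^{1,2}(\D^2,Gr_{p-2}(\R^p))$: this produces an orthonormal tangent frame $(\vec e_1,\vec e_2)$ of the orthogonal complement of $\bn_{\vec{\Phi}}$ along $\vec{\Phi}$, satisfying the Coulomb condition $\mathrm{div}\bigl(\vec e_1\cdot\nabla\vec e_2\bigr)=0$ in $\D^2$, with quantitative $W^{1,2}$ bounds. In such a frame, the term $K_g e^{2\lambda}$ has a Jacobian (null-Lagrangian) structure built out of $\nabla\vec e_1$ and $\nabla\vec e_2$, so Wente's $L^1$-$BMO$ duality estimate applies: the solution $\mu$ of $-\Delta\mu=K_g e^{2\lambda}$ in $\D^2$ with $\mu|_{\partial\D^2}=0$ is locally bounded (possibly after first passing to a small sub-disc to make the $W^{1,2}$ norm of the frame small). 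Writing $\lambda=\mu+\nu$ on $\D^2\setminus\{0\}$, the remainder $\nu$ is harmonic on the punctured disc.

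Now I would analyse the isolated singularity of $\nu$. Since the harmonic function $\nu$ has at most a logarithmic singularity at $0$, one has $\nu(z)=-\alpha\log|z|+\tilde\nu(z)$ with $\tilde\nu$ harmonic on $\D^2$ and bounded on compact subsets; the fact that $\vec{\Phi}\in W^{1,2}(\D^2)$ forces $\int_{\D^2}e^{2\lambda}<\infty$, which imposes $\alpha<1$, while the fact that $\vec{\Phi}$ does not blow up (it is assumed Lipschitz through $0$ in the conclusion, and in any case $\vec{\Phi}\in W^{1,2}$) forces $\alpha\le 0$. Setting $n-1:=-\alpha$ one therefore gets
\[
\lambda(z)\;=\;(n-1)\log|z|+\psi(z),\qquad \psi\in L^\infty_{loc}(\D^2),
\]
with $\psi$ continuous at $0$, yielding at once the two-sided bound $(C-o(1))|z|^{n-1}\le|\partial_z\vec{\Phi}|\le(C+o(1))|z|^{n-1}$ with $C=e^{\psi(0)}/\sqrt 2$.

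The main obstacle is to show that the real number $n$ produced above is a \emph{positive integer}. For this I would use the fact that $\vec{\Phi}_z:=\partial_z\vec{\Phi}$ is a $\C^p$-valued map on $\D^2\setminus\{0\}$ satisfying the isotropy condition $\vec{\Phi}_z\cdot\vec{\Phi}_z=0$ inherited from conformality, with $|\vec{\Phi}_z|=\tfrac{1}{\sqrt{2}}e^{\lambda}$. Write $\vec{\Phi}_z=|\vec{\Phi}_z|\,\vec v(z,\bar z)$ with $\vec v$ a unit isotropic tangent vector; the winding of $\vec v$ around $0$ relative to the continuous boundary values of the Coulomb frame $(\vec e_1,\vec e_2)$ is an integer (it is the degree of an $\Sp^1$-valued continuous map on a small circle), and a short computation relating this winding to the leading order of $\lambda$ via the PDE $\partial_{\bar z}\vec{\Phi}_z=\tfrac12 e^{2\lambda}\vec H$ — with the $\vec H$ term subdominant — identifies this winding number with $n-1$. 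Equivalently, since $\vec v$ and the Coulomb frame extend continuously to $0$, the asymptotic behaviour of $\vec{\Phi}_z/|\vec{\Phi}_z|^{}$ along a small loop must be single-valued, forcing $n\in\N$. Combining this with the previous $L^\infty$ control of $\psi$, and refining using that $\tilde\nu$ is continuous at $0$, gives the sharp asymptotic \eqref{eq:branched}.
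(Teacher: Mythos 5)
The paper itself does not prove this lemma: it is recalled from Lemma~A.5 of \cite{Ri3} (originally due to \cite{MS} by different arguments), and the route followed in \cite{Ri3} is exactly the one you propose, namely H\'elein's Coulomb moving frames plus Wente's inequality to reduce the conformal factor to a harmonic function with an isolated singularity, and then an integrality argument for the coefficient of $\log|z|$. So your overall architecture is the right one and matches the cited proof.

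However, two steps of your sketch are not sound as written, and they are precisely where the content of the lemma lies. First, you deduce $\alpha\le 0$ ``because $\vec{\Phi}$ is assumed Lipschitz through $0$ in the conclusion'': this is circular, since the Lipschitz extension is part of what must be proved. The correct order is to establish first that $n-1=-\alpha$ is an \emph{integer}, and only then combine this with $\int_{D^2}e^{2\lambda}<\infty$ (from the $W^{1,2}$ extension), which gives $n-1>-1$ and hence $n\ge 1$; the Lipschitz regularity of $\vec{\Phi}$ across $0$ is then a consequence, not an input. Second, the integrality argument itself is only asserted. Your supporting claim that the unit isotropic direction $\vec v=\partial_z\vec{\Phi}/|\partial_z\vec{\Phi}|$ ``extends continuously to $0$'' is false exactly in the interesting case $n\ge 2$, where $\vec v$ winds nontrivially around the puncture; what is true, and what the proof of \cite{Ri3} exploits, is that $\vec v$ and the Coulomb frame are single-valued on the punctured disc, so the relative rotation angle $\theta$ between the coordinate frame $e^{-\lambda}(\partial_{x_1}\vec{\Phi},\partial_{x_2}\vec{\Phi})$ and the Coulomb frame has integer winding on small circles, and one must actually carry out the computation (via the identity linking $d\theta$, the Coulomb connection form $\langle \vec e_1,d\vec e_2\rangle$ and $\ast d\lambda$) identifying this winding with the residue of $\Delta\lambda$ at $0$, i.e.\ with $n-1$. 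Likewise the statement that the term $\partial_{\bar z}(\partial_z\vec{\Phi})=\tfrac12 e^{2\lambda}\vec H$ is ``subdominant'' needs a genuine argument (e.g.\ a Cauchy-kernel or Hartman--Wintner type expansion using $e^{\lambda}\vec H\in L^2$ and the $L^\infty$ control of $\lambda-(n-1)\log|z|$) in order to upgrade the two-sided bound on $e^{\lambda}$ to the precise asymptotics \eqref{eq:branched}. Also minor but worth justifying: that the harmonic remainder $\nu$ has at most a logarithmic singularity (rule out $\mathrm{Re}(z^{-k})$ terms using the integrability of $e^{2\nu}$), rather than taking it for granted. With these points repaired your plan coincides with the moving-frame proof the paper refers to.
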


Observe that if $\bP$ is a weak branched conformal immersion with finite total curvature into the Riemannian manifold $(M^m,h)$ then by Nash embedding theorem we can see the ambient manifold $(M^m,h)$ isometrically embedded in some $\R^n$ and $\bP$ as a weak branched conformal immersion with finite total curvature into $\R^n$ taking values in the submanifold $M$. 
\medskip

By well established estimates on the conformal factor (see for example the notes of Rivi\`ere \cite{Ri1} pag. 120-136) we have that $\log|\nabla\bP| \in W^{1,2}_{loc}\cap L^\infty_{loc} (\D^2\setminus\{0\})$, moreover by assumption we have that $\bn_{\bP}\in W^{1,2}(\D^2)$; both the informations together imply that $\bP \in W^{2,2}_{loc}(\D^2\setminus\{0\})$. By assumption we have that $\bP \in W^{1,\infty}(\D^2)$ so we are in the assuptions of Lemma \ref{lem:branched} and we can conclude that the singular points $b_1,\ldots,b_{N_{\bP}}$ are actually branch points with \emph{positive} branching order (i.e. $n\geq 2$) given by \eqref{eq:branched}. 
\medskip

From the discussion above we know the behaviour of the conformal factor $\lambda=\log \lf(\lf|\frac{\p \bP}{\p z} \rg|\rg)$ near the branch points $b_1, \ldots, b_{N_{\bP}}$; taking conformal coordinates parametrizing  a punctured neighboorhod of $b_i$ on the puctured disc $\D^2 \backslash \{0\}$ we observe that the following conditions are satisfied:

\be\label{syst:ConfFacBP}
\lf\{
\begin{array}{l}
-\Delta_0 \lambda = K_{\bP} e^{2\lambda}-K_0 \quad \text{on } \Sigma\backslash\{b_1,\ldots,b_{N_{\bP}}\} \\[5mm]
(n_j-1) \log |z|+ (C-o(1)) \leq \lambda(z) \leq (n_j-1) \log |z|+ (C+o(1))\quad,
\end{array}
\rg.
\ee
where of course $K_{\bP}$ is the Gauss curvature of the metric $g=(\bP)^*(h)$ given by the immersion, $K_0$ is the (constant) Gauss curvature of the reference metric of $(\Sigma,c_0)$, $\Delta_0$ is the Laplace Beltrami operator on $(\Sigma,c_0)$ and $n_i \in \N$ are given by Lemma \ref{lem:branched} applied to a neighboorod of $b_i$; obseve that the first equation is just the Liouville equation on the regular part of the immersion $\bP$.
\medskip

Since $\lambda \in L^1_{loc}(\Sigma)$ then $-\Delta_0 \lambda$ is a distribution which, using the first equation of \eqref{syst:ConfFacBP}, has singular support contained in $\{b_1,\ldots,b_{N_{\bP}}\}$. By Schwartz Lemma it follows that 
$-\Delta_0 \lambda-K_{\bP} e^{2\lambda}+K_0$ is a finite sum of deltas and derivatives of deltas at the points $b_i$. Using the second condition of \eqref{syst:ConfFacBP} we get that $\lambda$ satisfies the following singular PDE on the whole $\Sigma$ (for more details see the Appendix of \cite{BDM}; see also \cite{McOw} and \cite{Tro}):

\be\label{eq:PDElamb}
-\Delta_0 \lambda = K_{\bP} e^{2\lambda}-K_0-2\pi \sum_{j=1} ^{N_{\bP}} \lf[(n_j-1) \delta_{b_j}\rg] 
\ee
where of course $\delta_{b_j}$ is the delta centred at the point $b_j$.

\subsection{Estimates on the gradient of the conformal factor and on the sum of the branching orders}

\begin{Lm}\label{lem:EstCFBranch}
Let $\bP\in {\cal F}^{c_0}_\Sigma$ be a weak branched conformal immersion of the Riemann surface $(\Sigma, c_0)$ into the Riemannian manifold $(M^m,h)$. Called $b_1,\ldots,b_{N_{\bP}}$ the branch points and $n_1\geq 1,\ldots,n_{N_{\bP}}\geq 1$ the corresponding branching orders given by the previous discussion in Subsection \ref{SubSec:SingPoints}, we have the following estimates on the sum of the branching orders and on the $L^{2,\infty}$ norm of the conformal factor:
\be \label{eq:EstBranch1}
 \sum_{j=1}^{N_{\bP}} (n_j-1) \leq   \frac {1} {4\pi} \int_\Sigma |\D \bn_{\bP}|^2 \, dvol_g+\frac {1} {2\pi} \lf(\sup_{\bP(\Sigma)} |\bar{K}|\rg) \Area_g(\Sigma)-\chi_E(\Sigma),
\ee

\be\label{eq:estNablalam1}
\|\nabla \lambda\|_{L^{2,\infty}(\Sigma)} \leq C_{(\Sigma,c_0)} \lf[ 1+ \int_\Sigma |\D \bn_{\bP}|^2 \, dvol_g + \lf(\sup_{\bP(\Sigma)}|\bar{K}|\rg) \Area_g(\Sigma)\rg], 
\ee
where  $\Area_g$ is the area of $\Sigma$ with respect to the metric $g:=\bP^*h$, $\bar{K}$ is the sectional curvature of $(M^m,h)$, $\chi_E(\Sigma)$ is the Euler characteristic of $\Sigma$, $C_{(\Sigma,c_0)}$ is a constant depending only on the Riemann surface $(\Sigma,c_0)$, and the $L^{2,\infty}$ norm on $\Sigma$ is taken with respect to the metric of scalar constant 
curvature and volume 1 on $(\Sigma,c_0)$.
\hfill $\Box$
\end{Lm}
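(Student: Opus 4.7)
\textbf{Plan for estimate \eqref{eq:EstBranch1}.} The first estimate is a quantitative Gauss--Bonnet identity for branched conformal immersions. The idea is to integrate the Liouville equation \eqref{eq:PDElamb} over $\Sigma$ against the reference volume form $dvol_0$. Since the reference metric has unit area and satisfies $K_0 = 2\pi\chi_E(\Sigma)$ (by Gauss--Bonnet on $(\Sigma,c_0)$), and since $\int_\Sigma K_{\bP}e^{2\lambda}\,dvol_0 = \int_\Sigma K_{\bP}\,dvol_g$, integrating gives
\[
\int_\Sigma K_{\bP}\,dvol_g \;=\; 2\pi\chi_E(\Sigma) \,+\, 2\pi\sum_{j=1}^{N_{\bP}}(n_j-1).
\]
On the other hand, the Gauss equation for an immersion into $(M^m,h)$ expresses the intrinsic Gauss curvature as $K_{\bP} = \bar K(T_p\Sigma) + \langle II(e_1,e_1),II(e_2,e_2)\rangle - |II(e_1,e_2)|^2$. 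An elementary AM--GM bound and $|II|^2 = |D\bn_{\bP}|^2$ yield the pointwise inequality $K_{\bP} \leq |\bar K| + \tfrac{1}{2}|D\bn_{\bP}|^2$. Integrating and rearranging gives \eqref{eq:EstBranch1} after dividing by $2\pi$.

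\textbf{Plan for estimate \eqref{eq:estNablalam1}.} The strategy is to decompose the conformal factor into a singular part carrying the Dirac masses and a regular part with an $L^1$ right-hand side. Let $G(\cdot,y)$ denote the Green's function of $-\Delta_0$ on $(\Sigma,c_0)$ (with $-\Delta_0 G(\cdot,y) = \delta_y - 1$) and write
\[
\lambda \;=\; u \,-\, 2\pi\sum_{j=1}^{N_{\bP}}(n_j-1)\,G(\cdot,b_j).
\]
Plugging into \eqref{eq:PDElamb} one checks that the Dirac masses cancel and $u$ solves
\[
-\Delta_0 u \;=\; K_{\bP}e^{2\lambda} - K_0 - 2\pi\sum_{j=1}^{N_{\bP}}(n_j-1) \quad \text{on } \Sigma,
\]
where the right-hand side has zero mean (by the identity proved in the first part) and $L^1(\Sigma)$ norm controlled by $\sup_{\bP(\Sigma)}|\bar K|\,\Area_g(\Sigma) + \tfrac{1}{2}\int_\Sigma|D\bn_{\bP}|^2\,dvol_g + |K_0| + 2\pi\sum(n_j-1)$, itself controlled through \eqref{eq:EstBranch1}.

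\textbf{Assembling the pieces.} For the singular part I will use that $|\nabla_x G(x,b_j)| \lesssim d(x,b_j)^{-1}$ uniformly in $b_j$, so $\|\nabla G(\cdot,b_j)\|_{L^{2,\infty}(\Sigma)} \leq C_{(\Sigma,c_0)}$ and hence the contribution to $\|\nabla\lambda\|_{L^{2,\infty}}$ is bounded by $C_{(\Sigma,c_0)}\sum(n_j-1)$, which is in turn bounded by \eqref{eq:EstBranch1}. For the regular part I invoke the standard $L^1\!\to\! L^{2,\infty}$ estimate for the inverse Laplacian on a closed surface (Lorentz-space elliptic regularity, a consequence of the weak-$(1,1)$ boundedness of the Riesz kernel): $\|\nabla u\|_{L^{2,\infty}} \leq C_{(\Sigma,c_0)}\|f\|_{L^1}$ whenever $-\Delta_0 u = f$ with $\int f\,dvol_0 = 0$. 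Summing the two contributions yields \eqref{eq:estNablalam1}.

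\textbf{Main obstacle.} The delicate point is the use of the weak-$L^2$ estimate for $\nabla u$ in terms of $\|f\|_{L^1}$: one cannot hope for a strong $L^2$ bound since $f\in L^1$ is the critical integrability. This is precisely where Lorentz spaces are natural, and it is also what forces us to absorb the Dirac masses into the Green-function part rather than trying to handle the singular measure directly. The rest of the argument is a careful bookkeeping of constants, together with the pointwise Gauss-equation comparison $|K_{\bP}|\leq |\bar K|+\tfrac{1}{2}|D\bn_{\bP}|^2$, which requires no more than the identification $|D\bn_{\bP}|=|II|$ valid on the regular part of $\bP$.
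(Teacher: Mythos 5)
Your proposal is correct and follows essentially the paper's own route: the first estimate is obtained exactly as in the paper, by integrating the singular Liouville equation \eqref{eq:PDElamb} against the unit-volume reference metric (so that $K_0=2\pi\chi_E(\Sigma)$) and using the Gauss equation with Cauchy--Schwarz to get the pointwise bound $K_{\bP}\le |\bar K|+\tfrac12 |\D\bn_{\bP}|^2$. For the $L^{2,\infty}$ bound the paper simply estimates the total mass of $\Delta_0\lambda$ as a Radon measure (Dirac masses included) and cites the standard measure-data estimate from \cite{He} (p.~138), whereas you re-derive that estimate by splitting off the atomic part with Green's functions and applying the $L^1\to L^{2,\infty}$ kernel bound to the remaining regular part -- the same underlying estimate on $|\nabla_x G(x,y)|\lesssim d(x,y)^{-1}$, so this is a presentational rather than substantive difference.
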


\begin{proof}
By the Gauss equation (which still holds almost everywhere on a weak branched  conformal immersion with finite total curvature by an approximation argument, in the smooth case see for example \cite{DoC} pag.130) we have
\be\label{eq:Gauss}
K_{\bP}=\bar{K}(\bP _*(T\Sigma))+\lf< {\mathbb I}_{11}, {\mathbb I}_{22} \rg>- |{\mathbb I}_{12}|^2
\ee
where $\bar{K}(\bP _*(T\Sigma))$ is the sectional curvature of $(M,h)$ computed on  the plane $\bP _*(T\Sigma)$, $K_{\bP}$ is the Gauss curvature of $(\Sigma,g)$ and $\mathbb I_{ij}$ is the second fundamental form which is defined almost everywhere in the usual sense:
$${\mathbb I}_{ij}:= -\sum_{\alpha=1}^{m-2}\lf< \D_{\bbe _i} \bn_\alpha, \bbe_j \rg> \bn_{\alpha} $$
where $\bbe_i(q):=\frac{1}{|\p _{x_i} \bP|}\p _{x_i} \bP|_q$ is an orthonormal frame of $\bP_*(T_q\Sigma)$ and $(\bn_1(q),\ldots,\bn_{m-2}(q))$ realizes a positive orthonormal basis of $(\bP_*(T_q\Sigma))^{\perp}$ so that $\bn_{\bP}=\bn_1\wedge\ldots\wedge \bn_{m-2}$. Since $|{\mathbb I}|^2=|\D \bn_{\bP}|^2$, by Schwartz's inequality we have the estimate
\be\label{eq:EstK}
\begin{array}{l}
|K_{\bP}|\leq |\bar{K}(\bP _*(T\Sigma))| + |{\mathbb I}_{11}| |{\mathbb I}_{22}| + |{\mathbb I}_{12}|^2\\[5mm]
\quad\quad\leq |\bar{K}(\bP _*(T\Sigma))| + \frac{1}{2}\lf(|{\mathbb I}_{11}|^2+ |{\mathbb I}_{22}|^2 +2 |{\mathbb I}_{12}|^2 \rg)=|\bar{K}(\bP _*(T\Sigma))| + \frac{1}{2} |\D \bn_{\bP}|^2.
\end{array}
\ee
Integrating equation \eqref{eq:PDElamb} and using the estimate \eqref{eq:EstK} we get the following estimate on the number of branch points in terms of the total curvature and the area:
\be \label{eq:EstBranch}
2\pi \sum_{j=1}^{N_{\bP}} (n_j-1)=\int_{\Sigma} (K_{\bP} e^{2\lambda}) \, dvol_{c_0}-K_0   \leq  \frac 1 2 \int_\Sigma |\D \bn_{\bP}|^2 \,dvol_{g}+\lf(\sup_{\bP(\Sigma)} |\bar{K}|\rg) \Area_g(\Sigma)-2\pi \chi_E(\Sigma)\quad,
\ee
where $dvol_{c_0}$ is the volume form with respect to constant scalar curvature metric of volume $1$ on $(\Sigma, c_0)$.
%Therefore, by definition of weak branched conformal immersion we get that there exists a constant $C=C((\Sigma,c_0),(M,h),\Area_g(\Sigma),\int_\Sigma |\nabla \bn|^2)$ such that given any weak branched conformal immersion $\Phi$, the sum of the branching orders is bounded by $C$:
%\be \label{eq:EstBranch1}
%\sum_j ^{N_{\bP}} (n_j-1)\leq C=C((\Sigma,c_0),(M,h),\Area_g(\Sigma),\int_\Sigma |\nabla \bn_{\bP}|^2).
%\ee
Now putting together equation \eqref{eq:PDElamb} and estimate \eqref{eq:EstBranch} we get the following bound on the norm of $-\Delta_0 \lambda$ as Radon measure:
\be
\|\Delta_0 \lambda\|_{{\cal M}(\Sigma)}\leq 4\pi |\chi_E(\Sigma)|+ \int_\Sigma |\D \bn_{\bP}|^2\,dvol_{g} +2 \lf(\sup_{\bP(\Sigma)} |\bar{K}|\rg) \Area_g(\Sigma)
\ee
which implies that there exists a constant $C=C_{(\Sigma,c_0)}$ depending just on the Riemann surface $(\Sigma,c_0)$ such that (see \cite{He} pag. 138)
\be\label{eq:estNablalam}
\|\nabla \lambda\|_{L^{2,\infty}(\Sigma)} \leq C_{(\Sigma,c_0)} \lf[ 1+ \int_\Sigma |\D \bn_{\bP}|^2\,dvol_{g}+ \lf(\sup_{\bP(\Sigma)}|\bar{K}|\rg) \Area_g(\Sigma)\rg] 
\ee
\end{proof}

\section{Concentration compactness for weak branched conformal immersions of general Riemann surfaces into manifolds}
\reset
\begin{Prop}\label{Prop:ConcComp}
Let $(\Sigma,c_k)$ be a sequence of Riemann surfaces defined on the same topological surface $\Sigma$, consider weak branched conformal immersions ${\cal F}_{\Sigma}^{c_k}\ni\bP_k\hookrightarrow (M^m,h)$ into the $m$-dimensional Riemannian manifold $(M^m,h)$ and assume the following conditions hold true:
\begin{itemize}
\item[{\rm (i)}] the conformal structures $c_k$  are contained in a fixed compact subset of the moduli space of $\Sigma$.
\item[{\rm (ii)}] the areas of $\Sigma$ induced by the immersions are uniformly bounded from above: called $g_k:=\bP_k ^*(h)$,
$$\Area_{g_k}(\Sigma)\leq C;$$
\item[{\rm (iii)}] the energies of the $\bP_k$ are uniformly bounded from above:
$$F(\bP_k)=\frac{1}{2} \int_{\Sigma} |\D \bn_{\bP_k}|^2 dvol_{g_k}= \frac{1}{2} \int_{\Sigma} | {\mathbb I}_{\bP_k}|^2 dvol_{g_k} \leq C.$$ 
\end{itemize}
Then there exists a finite set of points $\{a_1, \ldots, a_N\}\subset \Sigma$ such that, called $\lambda_k:=\log|\partial_{x_1} \bP_k|_h=\log|\partial_{x_2} \bP_k|_h$ the conformal factor of the immersion $\bP_k$, for any compact subset $K\subset \subset  \Sigma \setminus\{a_1,\ldots,a_N\}$ the following properties hold:
\begin{itemize}
\item[{\rm (a)}] there exists a constant $C_K$ depending on $K$ and on the bounds on areas and energies of $\bP_k$ such that, up to subsequences on $k$,   
$$\|e^{\lambda_k}\|_{L^\infty(K)}\leq C_K \quad ;$$ 
\item[{\rm (b)}]  either there exists a constant $C_K$ depending as above such that, up to subsequences on $k$,
 $$\|\lambda_k\|_{L^\infty(K)}\leq C_K$$
 or, up to subsequences on $k$,  
 $$\lambda_k \to -\infty \text { uniformly on } K.  $$
\end{itemize}

\end{Prop}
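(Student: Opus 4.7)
The strategy is to locate a finite set of ``bad'' points where the second fundamental form concentrates, and then on every compact subset $K$ of their complement to deduce (a) and (b) via an $\varepsilon$-regularity argument combined with the estimates from Lemma~\ref{lem:EstCFBranch}.

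I first turn (iii) into a control of a fixed measure on $\Sigma$: since $|\D\bn_{\bP_k}|^2 \, dvol_{g_k}$ is conformally invariant, these are Radon measures $\mu_k$ on the topological surface $\Sigma$ of uniformly bounded total mass. Passing to a subsequence, $\mu_k \rightharpoonup^\ast \mu$. Fix a threshold $\varepsilon_0 > 0$, small enough to trigger both the H\'elein moving-frame $\varepsilon$-regularity and the consequence of \eqref{eq:EstBranch1} forcing absence of branch points at the scale where the local energy is $<\varepsilon_0$. Define
\[
\{a_1,\ldots,a_N\} := \{x \in \Sigma : \mu(\{x\}) \geq \varepsilon_0\},
\]
which is finite since $\mu(\Sigma) < \infty$. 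Assumption (i) ensures that, after extraction, the constant-curvature metrics representing $c_k$ converge smoothly to a limit metric on $\Sigma$, so I may work in a fixed background geometry.

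Given $K \subset\subset \Sigma \setminus \{a_1,\ldots,a_N\}$, cover $K$ by finitely many small conformal disks $D_i = B_r(x_i)$ with $\mu(B_{2r}(x_i)) < \varepsilon_0$. Then for $k$ large,
\[
\int_{B_{2r}(x_i)} |\D \bn_{\bP_k}|^2 \, dvol_{g_k} < \varepsilon_0.
\]
For $\varepsilon_0$ sufficiently small, Lemma~\ref{lem:EstCFBranch} applied locally forces the sum of branching orders inside $B_{2r}(x_i)$ to vanish, so no branch points accumulate on these disks. On each disk one then runs the moving-frame argument from \cite{He,Ri1}: via a Coulomb lift of the tangent frame (using Nash to embed $M^m \hookrightarrow \R^n$) and Wente's inequality applied to the Liouville-type equation $-\Delta_0 \lambda_k = K_{\bP_k} e^{2\lambda_k} - K_0$ from~\eqref{eq:PDElamb}, together with the $L^{2,\infty}$ gradient bound~\eqref{eq:estNablalam1}, one obtains constants $c_k^i \in \R$ and $C > 0$ independent of $k$ with
\[
\|\lambda_k - c_k^i\|_{L^\infty(D_{r/2}(x_i))} \leq C.
\]

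The two conclusions then follow. For (a), the area bound (ii) gives $\int_{D_{r/2}(x_i)} e^{2\lambda_k} \, dvol_{c_0} \leq C$; combined with $\lambda_k \geq c_k^i - C$, this forces $c_k^i \leq C_K$, whence $\lambda_k \leq C_K$ and thus $\|e^{\lambda_k}\|_{L^\infty(K)} \leq C_K$. For the dichotomy (b), on overlapping disks the oscillation bound forces $|c_k^i - c_k^j| \leq 2C$, so after one further extraction, either all $c_k^i$ stay bounded --- giving $\|\lambda_k\|_{L^\infty(K)} \leq C_K$ --- or all of them tend to $-\infty$; a chain of overlapping disks connecting the components of the cover propagates this alternative uniformly on $K$.

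\textbf{Main obstacle.} The delicate step is the oscillation estimate for $\lambda_k$ in the small-energy disks. The immersions are only weak (lipschitz, with $W^{2,2}$ regularity in a conformal gauge) and take values in a curved ambient manifold, so the moving-frame/Wente scheme of \cite{He,Ri1} has to be carried out on the extrinsic $\R^n$-valued maps while absorbing the extra term $\bar{K}(\bP_{k,*} T\Sigma)\, e^{2\lambda_k}$ in the Gauss equation via the area bound $\sup_{M^m}|\bar K| \cdot \Area_{g_k}(\Sigma)$. The remaining ingredients --- the weak-$\ast$ extraction of $\{a_i\}$, the removal of branch points at small energy via~\eqref{eq:EstBranch1}, and the Harnack-type propagation of the dichotomy --- are standard once Lemma~\ref{lem:EstCFBranch} and assumption (i) are in hand.
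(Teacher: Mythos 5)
There is a genuine gap in how you dispose of the branch points. Your bad set consists only of the atoms of the limiting measure $\mu$ of $|\D\bn_{\bP_k}|^2\,dvol_{g_k}$, and you then claim that on a disk with energy below $\varepsilon_0$ ``Lemma~\ref{lem:EstCFBranch} applied locally forces the sum of branching orders to vanish''. But \eqref{eq:EstBranch1} is a global statement on the closed surface: it comes from integrating the singular Liouville equation \eqref{eq:PDElamb} over all of $\Sigma$, which is why the Euler characteristic $\chi_E(\Sigma)$ appears. On a disk the same integration produces an uncontrolled boundary term (the total geodesic curvature of $\p B_{2r}(x_i)$), so no local version of \eqref{eq:EstBranch1} is available; and indeed the conclusion is false: a branch point carries no definite amount of $|\D\bn|^2$-energy. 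The holomorphic (hence conformal) curve $z\mapsto(z^2,z^3)$ into $\R^4$ has a branch point at the origin while $\int_{B_\rho}|\D\bn|^2\to 0$ as $\rho\to 0$, so branch points of $\bP_k$ may sit inside your ``good'' disks without ever being detected by $\mu$.

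This matters because the oscillation estimate $\|\lambda_k-c_k^i\|_{L^\infty}\le C$ via moving frames/Wente is only valid where $\bP_k$ is unbranched: near a branch point $b$ of order $n_j$ one has $\lambda_k\sim (n_j-1)\log|z-b|$, so $\lambda_k\notin L^\infty$ there, and on a compact $K$ containing a point where branch points of the $\bP_k$ accumulate the dichotomy (b) simply fails ($\|\lambda_k\|_{L^\infty(K)}=+\infty$ while $\lambda_k\not\to-\infty$ away from the branch point). The repair is exactly what the paper does: use Lemma~\ref{lem:EstCFBranch} \emph{globally} to bound the number $N_k$ of branch points, extract so that $N_k$ is constant and $b^j_k\to b^j_\infty$, and enlarge the bad set to include the limits $b^j_\infty$ in addition to the energy-concentration points; the small-energy $\varepsilon$-regularity is then run on disks (or on coverings of annular regions avoiding small balls around the $b^j_k$) that contain no branch points for $k$ large. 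Apart from this omission, your scheme (weak-$\ast$ extraction of concentration points versus the paper's Besicovitch covering at the energy threshold $8\pi/3$, the use of the area bound for (a), and the chain-of-overlapping-disks argument for the dichotomy (b)) follows the same lines as the paper's proof.
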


\begin{proof}
Consider a Nash isometric embedding $\vec{I}:(M^m,h)\hookrightarrow {\R}^n$; in this way $ \vec{I} \circ \bP_k:(\Sigma,c_k)\hookrightarrow (M^m,h) \hookrightarrow {\R}^n$ are weak branched conformal immersions in ${\R}^n$. Observe that by the compactness of $M$ and the area bound on $\bP_k$, the energies of $\vec{I}\circ \bP_k$ are also uniformly bounded:
\be\label{eq:bdd2ffRp1}
\int_{\Sigma} |d \bn_{\vec{I}\circ \bP_k}|_{g_k}^2 dvol_{g_k}\leq C.
\ee
Indeed, we have 
\be
\label{xy1}
|d \bn_{\vec{I}\circ \bP_k}|_{g_k}^2\leq |D^h \bn_{\bP_k}|_{g_k}^2+ |d \bn_{\vec{I}}|_{g_k}^2,
\ee
where $\vec{n}_{\vec{I}}$ is the Gauss map of $M^m$ in ${\R}^n$. This inequality comes simply from the fact that 
\[
 \bn_{\vec{I}\circ \bP_k}=\bn_{\bP_k}\wedge\bn_{\vec{I}}\quad.
\]
In order to get the claim it suffices to integrate on $\Sigma$ with respect to $dvol_{g_k}$ and recall the assumed bounds on area and energy of $\bP_k$. Now, since the intengrand in \eqref{eq:bdd2ffRp1} is invariant under conformal change of metric in the domain we have
\be\label{eq:bdd2ffRp}
\int_{\Sigma} |d \bn_{\vec{I}\circ \bP_k}|_{h_k}^2 dvol_{h_k}=
\int_{\Sigma} |d \bn_{\vec{I}\circ \bP_k}|_{g_k}^2 dvol_{g_k}\leq C,
\ee
where $h_k$ is the smooth reference metric on $(\Sigma,c_k)$ of constant curvature and unit volume. Since $c_k$ by assumption are contained in a compact subset of the moduli space of $\Sigma$, up to subsequences, $h_k$ strongly converges  in $C^{r}(\Sigma)$ for every $r$ to $h_\infty$, the constant curvature metric of volume 1 associated to $c_\infty$ (the limiting conformal structure). Recall that $\bP_k$ are weak branched conformal immersions with branch points $\{b^1_k,\ldots,b^{N_k}_k\}$; from the area and energy bounds, Lemma \ref{lem:EstCFBranch} implies that the number of branch points $N_k$ is uniformly bounded, so up to subsequences we can assume $N_k$ constant, say $N_k=N_1$.  For each  $k\in \N$ and  $x \in \Sigma$ we assign $\rho_x^k>0$ such that
\be
\int_{B_{\rho_x^k}(x)} |d \bn_{\vec{I}\circ \bP_k}|_{h_k}^2 dvol_{h_k}=\frac{8 \pi}{3},
\ee 
where $B_{\rho_x^k}(x)$ is the geodesic ball in $(\Sigma,h_k)$ of center $x$ and radius  $\rho_x^k$ (notice that for any closed surface $\Sigma$ immersed in $\R^n$ one has $\int_\Sigma|d \bn|^2 \, dvol_g \geq 8\pi$ so the minimal radius $\rho_x^k$ exists finite). From the covering $\{B_{\rho_x^k}(x)\}_{x\in \Sigma}$ we extract a finite Besicovich covering: every point of $\Sigma$ is covered by at most $\xi=\xi(\Sigma,h_\infty)\in \N$ balls. Let $\{B_{\rho^i_k(x^i_k)}\}_{i \in I}$ be such a cover and observe that up to subsequences: $I$ is independent on $k$, $x^i_k\to x^i_{\infty}$, $\rho^i_k\to \rho^i_{\infty}$, $b^i_k\to b^i_{\infty}$. Let 
$$J:=\{i \in I \text{ such that } \rho^i_{\infty}=0\} \quad \text{and} \quad I_0:=I\setminus J.$$
The union of the closed balls $\cup_{i\in I_0} \bar{B}_{\rho^i_{\infty}}(x^i_{\infty})$ covers $\Sigma$. Because of the strict convexity of the balls with respect to the euclidean distance ($\Sigma$=Torus, or $\Sigma=\Sp^2$ via stereographic projection) or the hyperbolic distance ($\genus(\Sigma)>1$) the points of $\Sigma$ which are not contained in the union of the \emph{open} balls $\cup_{i\in I} B_{\rho^i_\infty}(x^i_{\infty})$ cannot accumulate and therefore are isolated and hence finite (this argument was  the same  in the unbranched situation, see \cite{Ri3}). Denote 
$$\{d^1,\ldots, d^{N_2}\}:=\Sigma\setminus \cup_{i\in I_0}B_{\rho^i_{\infty}}(x^i_{\infty}).$$
Now let 
$$\{a^1,\ldots,a^N\}:=\{d^1,\ldots, d^{N_2}\}\cup \{b^1_{\infty},\ldots, b^{N_1}_{\infty}\}$$
and fix a compact subset $K\subset \subset \Sigma\backslash \{a^1,\ldots,a^N\}$. Clearly there exists $\delta>0$ such that
$K \subset \Sigma\setminus \cup_{i=1}^N \bar{B}_{\delta}(a^i)$. Since $\Sigma \setminus \cup_{i=1}^N B_{\delta}(a^i)\subset \cup_{i\in I_0} B_{\rho^i_{\infty}}(x^i_{\infty})$, there exist $0<r^i<\rho^i_{\infty}$ such that
\be\label{3.20}
\Sigma\setminus \cup_{i=1}^N B_{\delta}(a^i) \subset \cup_{i\in I_0} B_{r^i}(x^i_{\infty}) 
\ee
and for $k$ large enough one has, for any $i \in I_0$, $B_{r^i}(x^i_\infty)\subset B_{\rho^i_k}(x^i_k)$. Let $s^i=(r^i+\rho^i_\infty)/2$. Again, for $k$ large enough, $B_{s^i}(x^i_\infty)\subset B_{\rho^i_k}(x^i_k)$ for $i\in I_0$. Recall that by construction we have the crucial estimate
\be \label{eq:3.21}
\int_{B_{s^i}(x^i_{\infty})} |d \bn_{\vec{I}\circ \bP_k}|_{h_k}^2 dvol_{h_k}\leq\frac{8 \pi}{3}.
\ee
Now we claim that for every $i$ and, up to subsequences, $k$ there exists a constant $\bar{\lambda}^i_k$ such that for every radius $r^i < r < s^i$:
\be\label{eq:3.22}
\|\lambda_k-\bar{\lambda}^i_k\|_{B_r(x^i_\infty)\setminus \cup_{j=1}^{N_1} B_{\delta/2}(b^j_{\infty})}\leq C_{r,\delta}.
\ee
In order to prove the claim observe that for each ball $B_{s^i}(x^i_{\infty}), i \in I_0$, we have two possibilities: either it contains a limit of branch points $b^{j_i}_{\infty}$ or not. %In the second case for $k$ large enough the ball $B_{s^i}(x^i_{\infty})\subset B(\rho^i_k(x^i_k))$ does not contain branch points of $\bP_k$: $B_{s^i}(x^i_{\infty})\cap \{b^1_k,\ldots,b^{N_1}_k\}=\emptyset$ and moreover 
In the second case, since \eqref{eq:3.21} holds,  we can apply the construction of moving frames (pag.23, 49 \cite{Ri3}, pag.139-142 \cite{Ri1}) and on the slightly smaller  ball $B_r(x^i_\infty)$  we have an $L^\infty$ control of the conformal factor $\lambda_k$ up to a constant $\bar{\lambda}^i_k$ as desired.
In the first case $B_{s^i}(x^i_{\infty})$ contains some  limit of branch points. %$b^{j_i,1}_{\infty},\ldots,b^{j_i,L_i}_{\infty}$. 
Consider a finite cover of 
 $$B_{s^i}(x^i_{\infty})\setminus \cup_{j=1}^{N_1}B_{\delta/4}(b^j_\infty)$$
  of balls contained in $ B_{\rho^i_k}(x^i_k) \setminus \cup_{j=1}^{N_1}B_{\delta/8}(b^j_\infty)$ ( recall that, for large $k$,   $B_{s^i}(x^i_{\infty})\subset B_{\rho^i_k}(x^i_k)$ with boundaries at strictly positive distance); %since $s^i<\rho^i_{\infty}$, the concentration points $\{d^1,\ldots,d^{N_2}\}$ are at positive distance from $B_{s^i}(x^i_{\infty})$ (in the worst case $d^i\in \partial B_{\rho^i_{\infty}}(x^i_{\infty})$) 
  so for each ball $\tilde{B}$ in this last covering we have $\int_{\tilde{B}} |d \bn_{\vec{I}\circ \bP_k}|_{h_k}^2 dvol_{h_k}\leq\frac{8 \pi}{3}$ for large $k$. Hence on each ball in the covering we costruct Helein's moving frames as before getting $L^\infty$ control of $\lambda_k$ on any slightly smaller ball. This gives the estimate \eqref{eq:3.22} on  $B_{r}(x^i_{\infty})\setminus \cup_{j=1}^{N_1}B_{\delta/2}(b^j_\infty) \subset B_{s^i}(x^i_{\infty})\setminus \cup_{j=1}^{N_1}B_{\delta/4}(b^j_\infty)$.

%so for $k$ large enough $B_{\rho^i_{k}}(x^i_{k})$ contains some branch point $b^{j_i,1}_k,\ldots,b^{j_i,L_i}_{k}$ of $\bP_k$. 
%Since the branching orders are uniformly bounded by Lemma \ref{lem:EstCFBranch}, up to subsequences the branching order of $\bP_k$ at $b^{j_i,l}_k$ is fixed, say $2\leq n^{j,l}\in \N$ (see discussion in Subsection \ref{SubSec:SingPoints}). Therefore by Lemma \ref{lem:branched} there exist constants $\bar{\lambda}^i_k$ such that in a neighbourhood of $b^{j_i,l}_k$
%\be\label{eq:EstCFBP}
%(e^{\bar{\lambda}^i_k}-o(1)) |z-b^{j_i}_k|^{n^j-1}  \leq \lf|\frac{\p \bP}{\p z} \rg| \leq (e^{\bar{\lambda}^i_k}+o(1)) |z-b^{j_i}_k|^{n^j-1}. 
%\ee  
%Covering $B$
%As before call $\lambda_k (z):=\log \lf(\lf|\frac{\p \bP}{\p z} \rg|\rg)$ the conformal factor. For  any $0<s^i< \rho^i_{\infty}$ for $k$ large enough the ball $B_{s^i}(x^i_{\infty})\subset B(\rho^i_k(x^i_k))$ and we get the estimate
%\be
%\|\lambda_k(z)-\bar{\lambda}^i_k+(n_j-1)\log|z-b^{j_i}_k| \;\|_{L^\infty(B_{s^i}(x^i_{\infty}))} \leq  C \quad \text{ independent on $i$ and $k$}
%\ee
Now, from the area bound, $\bar \lambda^i_k\leq C$ independent on $k$ and $i$. Indeed if it is not the case there would exist $i_0$ and a subsequence in $k$ such that $\bar \lambda^{i_0}_k\to + \infty$; therefore by \eqref{eq:3.22}
$$
\begin{array}{l}
\ds\Area_{g_k}(B_r(x^{i_0}_\infty)\setminus \cup_{j=1}^{N_1} B_{\delta/2}(b^j_\infty))=\int_{B_r(x^{i_0}_\infty)\setminus \cup_{j=1}^{N_1} B_{\delta/2}(b^j_\infty)} e^{2\lambda_k} dvol_{h_k}\\[5mm]
\ds\quad\quad\quad\geq e^{-2C_{r,\delta}} \int_{B_r(x^{i_0}_\infty)\setminus \cup_{j=1}^{N_1} B_{\delta/2}(b^j_\infty)} e^{2\bar\lambda ^{i_0} _k} dvol_{h_k}
\end{array}
$$
contradicting the area bound, so (a) is proved. 

For getting $b)$ observe that if for one index $i_0$ there exists a constant such that $\bar \lambda^{i_0}_k\geq C>-\infty$ for infinitely many $k$, then by \eqref{eq:3.22} all the balls of the covering with non empty intersection with such a ball have the same property (the holes $\cup_{j=1}^{N_1} B_{\delta/2}(b^j_\infty)$ do not cover the intersection of two balls of the covering if $\delta>0$ is taken small enough) then, since $\Sigma$ is connected, on all the balls of the covering we have  $\bar \lambda^i_k\geq C$ and $\lambda_k$ are uniformly bounded on the compact subset $K$; on the other hand, if on every ball of the covering $\bar \lambda^i_k\to -\infty$ (up to subsequences in $k$) we have by \eqref{eq:3.22} that $\lambda_k \to -\infty$ uniformly on the compact subset $K$. Therefore also (b) is proved.
\end{proof}

\section{A ``good gauge extraction'' in the M\"obius group for sequences of immersions of spheres under $G-$energy control and
diameter positive lower bound. }
\reset
In the present subsection we prove that, assuming the reference surface is $\Sp^2$ and that the images $\Phi_k(\Sp^2)$ have diameter bounded below by a striclty positive costant, we can reparametrize the immersions such that in Proposition \ref{Prop:ConcComp} the degenerating case $\lambda_k\to-\infty$ does not occur; therefore on compact subsets invading $\Sp^2$ there is an $L^\infty$ control of the conformal factors. 

\begin{Lm}
\label{lm-IV.1}{\bf [Good gauge extraction Lemma].}
Let  ${\cal F}_{\Sp^2}\ni\bP_k\hookrightarrow (M^m,h)$ be a sequence of weak branched conformal immersions of $\Sp^2$ into the $m$-dimensional riemannian manifold $(M^m,h)$ and assume the following conditions hold true:
\begin{itemize}
\item[{\rm (i)}] the areas of $\Sp^2$ induced by the immersions are uniformly bounded from above: called $g_k:=\bP_k ^*(h)$
$$\Area_{g_k}(\Sp^2)\leq C;$$
\item[{\rm (ii)}] the energies of the $\bP_k$ are uniformly bounded from above:
$$F(\bP_k)=\frac{1}{2} \int_{\Sp^2} |\D \bn_{\bP_k}|^2 dvol_{g_k}= \frac{1}{2} \int_{\Sp^2} | {\mathbb I}_{\bP_k}|^2 dvol_{g_k} \leq C;$$ 
\item[{\rm (iii)}] the diameters of $\bP_k(\Sp^2)$ are bounded below by a strictly positive constant:
$$\diam_{M}(\bP_k(\Sp^2))\geq \frac{1}{C}.$$
\end{itemize}
Then for every $k$ (up to subsequences) there exists a positive M\"obius transformations $f_k\in{\mathcal M}^+(\Sp^2)$   such that called ${\cal F}_{\Sp^2}\ni\tilde{\bP}_k:=\bP_k \circ f_k$ the reparametrized immersion and $\tilde{\lambda}_k:=\log|\partial_{x_1} \tilde{\bP}_k|=\log|\partial_{x_2} \tilde{\bP}_k|$ the new conformal factor the following holds:

There exists a finite set of points $\{a_1, \ldots, a_N\}\subset \Sp^2$ such that for any compact subset $K\subset \subset  \Sp^2 \setminus\{a_1,\ldots,a_N\}$ there exists a constant $C_K$ depending on the compact $K$ and on the bounds on areas and energies of $\bP_k$ such that, up to subsequences on $k$,   
 $$\|\tilde{\lambda}_k\|_{L^\infty(K)}\leq C_K\quad .$$\hfill $\Box$
\end{Lm}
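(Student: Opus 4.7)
The plan is to reduce to Proposition~\ref{Prop:ConcComp}: I would exhibit $f_k\in\mathcal{M}^+(\Sp^2)$ for which the renormalized sequence $\tilde{\bP}_k:=\bP_k\circ f_k$ falls in the non-degenerate branch of the dichotomy proved in that Proposition. Since $\Sp^2$ carries a unique conformal structure, hypothesis (i) of Proposition~\ref{Prop:ConcComp} is automatic; moreover, inspecting its proof one sees that the dichotomy between uniform boundedness of the conformal factor on every compact away from the singular set and its uniform divergence to $-\infty$ is in fact \emph{global}: the $L^\infty$ control \eqref{eq:3.22} and the connectedness of $\Sp^2$ minus a finite set show that if the mean value of the renormalized conformal factor on one ball of the covering is bounded below, then the same holds on all of them. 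It thus suffices to arrange, via M\"obius renormalization, that $\tilde\lambda_k$ is bounded at just one regular point of $\tilde{\bP}_k$.

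To construct $f_k$, I set $M_k:=\mathrm{ess\,sup}_{\Sp^2}\lambda_k$, which is finite since $\bP_k\in W^{1,\infty}$, and pick a regular point $p_k\in\Sp^2$ realizing $\lambda_k(p_k)\geq M_k-1$. Take $\rho_k\in SO(3)$ to be a rotation sending the south pole $s$ to $p_k$ and $D_{\sigma_k}$ the M\"obius dilation of factor $\sigma_k:=e^{-M_k}$ based at $s$ in stereographic coordinates; I define $f_k:=\rho_k\circ D_{\sigma_k}\in\mathcal{M}^+(\Sp^2)$. M\"obius pre-composition preserves conformality, area and the energy $F$, so $\tilde{\bP}_k\in\mathcal{F}_{\Sp^2}$ still satisfies all hypotheses of Proposition~\ref{Prop:ConcComp}. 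Using the chain rule $\tilde\lambda_k=\lambda_k\circ f_k+\mu_{f_k}$, the facts that $D_{\sigma_k}$ fixes $s$ and that $\rho_k$ is an isometry give $\mu_{f_k}(s)=\log\sigma_k=-M_k$, hence
\[
\tilde\lambda_k(s)=\lambda_k(p_k)+\mu_{f_k}(s)\geq(M_k-1)-M_k=-1.
\]

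Applying Proposition~\ref{Prop:ConcComp} to $\tilde{\bP}_k$ yields a finite singular set $\{a_1,\ldots,a_N\}\subset\Sp^2$. The pointwise lower bound $\tilde\lambda_k(s)\ge -1$, combined with the moving-frame estimate \eqref{eq:3.22}, pins the mean value on the covering ball containing $s$ away from $-\infty$, thereby excluding the degenerate alternative on that ball. The connectedness argument from the proof of Proposition~\ref{Prop:ConcComp} then propagates this non-degeneracy to every ball of the covering, yielding the desired bound $\|\tilde\lambda_k\|_{L^\infty(K)}\le C_K$ on every compact $K\subset\subset\Sp^2\setminus\{a_1,\ldots,a_N\}$. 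The hard part is handling the degenerate situation in which $s$ itself enters the new singular set, meaning that a strictly finer sub-bubble was only partially absorbed by the renormalization; in that case I would iterate the construction, picking at each step the current essential maximum of the conformal factor, and terminate after finitely many iterations by the $\frac{8\pi}{3}$ $\e$-regularity threshold on $\int|d\bn_{\vec{I}\circ\bP_k}|^2$ used in the proof of Proposition~\ref{Prop:ConcComp}, which is controlled by the uniform bound in hypothesis (ii).
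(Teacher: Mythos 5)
Your reduction to Proposition~\ref{Prop:ConcComp}, and the observation that its dichotomy is global (via \eqref{eq:3.22} and connectedness of $\Sp^2$ minus a finite set), are fine; the gap is exactly at the point you flag, and it is fatal for the normalization you chose. A lower bound $\tilde\lambda_k(s)\ge -1$ at the single point $s$ only excludes the degenerate alternative if $s$ stays inside a fixed compact set away from the singular points of the \emph{renormalized} sequence, and nothing in your construction guarantees this. Concretely, let $\vec{\Phi}_k$ parametrize a fixed sphere of diameter $1/C$ carrying a small bubble of extrinsic size $\epsilon_k\to 0$, the bubble being covered in the domain by a disc of radius $\delta_k$ with $\delta_k/\epsilon_k\to 0$ (a thin neck makes this conformally admissible); hypotheses (i)--(iii) all hold. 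Then $M_k\approx\log(\epsilon_k/\delta_k)$ is attained on the bubble, so your $f_k$ dilates by $e^{-M_k}=\delta_k/\epsilon_k$ at a point of the bubble disc: after composition the bubble occupies a ball of radius $\approx\epsilon_k\to 0$ around $s$, the macroscopic sphere is compressed into a shrinking neighborhood of the antipode $-s$, and on every compact subset of $\Sp^2\setminus\{s,-s\}$ one computes $\tilde\lambda_k\approx\log(\delta_k/\epsilon_k)\to-\infty$. So $s$ itself is a concentration point, the pointwise bound $\tilde\lambda_k(s)\ge -1$ coexists with the degenerate alternative, and the main argument collapses. Note also that your construction never invokes hypothesis (iii); since the lemma is false without it (shrinking round spheres admit no good gauge, as the area--diameter--Willmore monotonicity shows), no argument that ignores (iii) can be complete. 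The fallback ``iterate at the new essential max and stop by the $8\pi/3$ threshold'' is not a proof: in the example above the new essential sup is again $\approx M_k$ (now near $-s$), you exhibit no monotone or integer-valued quantity that decreases, and the $8\pi/3$ threshold is used in Proposition~\ref{Prop:ConcComp} to bound the number of concentration points, not to terminate an iteration of gauges.

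For contrast, the paper argues by contradiction and uses (iii) from the start: assuming \emph{every} gauge degenerates, it picks $n$ image points at mutual distance $\ge 1/(Cn)$, observes that degeneration forces the lengths of the images of suitable fixed curves to vanish, and then, by an induction that keeps track of branch points and of the regions already produced under rescaling, isolates $n-1$ disjoint domain regions each satisfying $\int|\vec{H}|^2\,dvol_{g_k}\ge 4\pi-\frac{1}{n-1}$, via the boundary inequality of Lemma I.1 of \cite{Ri4}. Letting $n\to\infty$ contradicts the energy bound (ii). Some mechanism of this kind, converting the hypothesis that all gauges degenerate into an unbounded number of quantized $4\pi$ contributions, is what your proposal is missing; a pointwise renormalization of the conformal factor cannot replace it, because the supremum of $\lambda_k$ may sit on a piece of the surface whose image diameter tends to zero.
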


\begin{proof}
We prove the lemma by contradiction. If the thesis is not true then for every $f_k:\Sp^2\to \Sp^2$ composition of isometries and dilations, called $\tilde{\bP}_k:=\bP_k \circ f_k$ the reparametrized immersion and $\tilde{\lambda}_k:=\log|\partial_{x_1} \tilde{\bP}_k|=\log|\partial_{x_2} \tilde{\bP}_k|$ the new conformal factor, the second case in (b) of Proposition \ref{Prop:ConcComp} occurs: there exists a finite set of points $\{a_1, \ldots, a_N\}\subset \Sp^2$ such that for any compact subset $K\subset \subset  \Sp^2 \setminus\{a_1,\ldots,a_N\}$, up to subsequences on $k$,  
 $$\tilde{\lambda}_k \to -\infty \text { uniformly on } K.$$
 On the other hand, called $\vec{I}:M\to \Rp$ the Nash isometric embedding, combining assumptions (i) and (ii) as in the begininning of the proof of Proposition \ref{Prop:ConcComp} we have \eqref{eq:bdd2ffRp1}, namely
 \be\label{eq:bdd2ffRp2}
\int_{\Sp^2} |d \bn_{\vec{I}\circ \bP_k}|_{g_k}^2 dvol_{g_k}\leq C.
\ee
We will prove that if the thesis is not true and assumption (iii) holds, then \eqref{eq:bdd2ffRp2} cannot be satisfied.

Fix any $ 2 \leq n \in \N$, take $p^1_k,\ldots,p^n_k \in \bP_k(\Sp^2)$ such that 
\be\label{eq:LowerDist}
\dist_M(p^i_k,p^j_k)\geq \frac{1}{Cn} \quad \forall i\neq j
\ee
(clearly this is can be done thanks to assumption (iii) ) and pick $q^i_k\in \Sp^2$ such that $\bP_k(q^i_k)=p^i_k$ (observe that the order in the $i$ indicization is not relevant; in the sequel we will relabel the points for convenience of notation). Considering stereographic projection with center different from all the points $\{q^i_k\}_{i,k}$ we can carry on the proof in $\R^2$.

STEP 1: for every $k$ consider the two points at minimal distance; up to subsequences in $k$ and relabeling in $i$ we can assume they are $q^1_k$ and $q^2_k$:
$$|q^1_k-q^2_k|_{\R^2}\leq |q^i_k-q^j_k|_{\R^2} \quad \forall k, \quad \forall i\neq j.$$
 Up to compositions $f_k^{-1}:\R^2 \to \R^2$ of isometries and dilations, we can assume that $q^1_k=(0,0)$ is the origin and $q^2_k=(1,0)$. By construction, for fixed $k$, all the  points $q^i_k$ are at mutual distance at least $1$ and up to subsequences, either $|q^1_k|\to \infty$ as $k\to \infty$ or  
$$q^i_k \to q^i_{\infty} \quad \text{with} \quad |q^i_{\infty}-q^j_{\infty}|\geq 1 \quad \forall i\neq j.$$
Since we are assuming the thesis is not true, as explained above, we have that there exists a finite set of points $\{a_1, \ldots, a_N\}\subset \Sp^2$ such that for any compact subset $K\subset \subset  \Sp^2 \setminus\{a_1,\ldots,a_N\}$, up to subsequences on $k$,  
 \be \label{eq:lambdainfty2}
 \lambda_k \to -\infty \text { uniformly on } K;
 \ee
(with abuse of notation we identify $\bP_k$ with $\tilde{\bP}_k$ and $\lambda_k$ with $\tilde{\lambda}_k$).
\\It is clear that one can construct two smooth open subsets $C^1\subset B_{\frac{2}{3}}((0,0))$ and $C^2\subset B_{\frac{2}{3}}((1,0))$ such that
\begin{itemize}
\item $\bar{C}^1$ and $\bar{C}^2$ are diffeomorphic to closed balls,
\item the intersection of $\bar{C}^1$ and $\bar{C}^2$ consists of just one point, say $q_0$:
\be\label{eq:intersection1}
\bar{C}^1 \cap \bar{C}^2=\{q_0\}=\partial C^1\cap \partial C^2,
\ee
\item $q^1_k=(0,0)\in C^1$ and $q^2_k=(1,0)\in C^2$ but observe that by construction  $\dist_{\R^2} (q^i_k, C^1\cup C^2)\geq \frac{1}{3}$ for every $i\geq 3$,
\item $\left(\partial C^1 \cup \partial C^2\right) \cap \{a_1,\ldots,a_N\}=\emptyset$.
\end{itemize}
Observe that the last condition together with \eqref{eq:lambdainfty2} implies that the lengths of the images of $\partial C^1$ and $\partial C^2$ converge to 0:
\be\label{eq:partialCto0}
{\cal H}^1\left(\bP_k (\partial C^1) \cup \bP_k (\partial C^2) \right) \to 0. 
\ee
Recall that the branch points $\{b^1_k,\ldots,b^{N_k}_k\}$ of $\Phi_k$ converge up to subsequences to a subset of $\{a_1,\ldots,a_N\}$, then we also have  (this information will be used later in Step 2)
\be\label{eq:NoBranchC}
\left(\partial C^{1}\cup \partial C^2\right)\cap \{b^1_k,\ldots,b^{N_k}_k\}=\emptyset \quad \text{for large } k.
\ee
Notice moreover that at least one between $\bP_k(C^1)$ and $\bP_k(C^2)$ has diameter at least $\frac{1}{2nC}$, say $\bP_k(C^1)$:
\be\label{eq:lbd}
\diam_M(\bP_k(C^1))\geq \frac{1}{2nC}.
\ee
 Indeed if $\diam_M(\bP_k(C^1))<\frac{1}{2nC}$ and $\diam_M(\bP_k(C^2))<\frac{1}{2nC}$, since $\bP_k(C^1)$ and $\bP_k(C^2)$ are connected throughout $\bP_k(q_0)$ then $\dist_M(p^1_k,p^2_k)=\dist_M(\bP_k(q^1_k),\bP_k(q^2_k))<\frac{1}{nC}$ contradicting \eqref{eq:LowerDist}.

Now recall Lemma I.1 in \cite{Ri4} for weak immersions without branch points: let $\Sigma$ be a compact surface with boundary and let $\bP:\Sigma \hookrightarrow \Rp$ be a weak immersion in $\Rp$  without branch points; then the following inequality holds 
\be\label{eq:MFBoundary1}
4 \pi\leq \int_{\Sigma} |\vec{H}|^2 d\mu_g + 2 \frac{{\cal H}^1(\bP (\partial \Sigma))}{d( \bP(\partial\Sigma),\bP(\Sigma))}
\ee
where $d$ is the usual distance between two sets:  $d( \bP(\partial\Sigma),\bP(\Sigma)):=\sup_{p_1\in \bP(\Sigma)} \inf_{p_2 \in  \bP(\partial \Sigma)} |p_1-p_2|_{\R^p}$.

Let $b^1_k,\ldots,b^{L_k}_k$ be the branch points of $\bP_k$ contained in $C^1$, then apply \eqref{eq:MFBoundary1} to $\bP_k$ restricted to $\bar{C}^1\setminus (\cup_{j=1}^{L_k} B_\epsilon(b^j_k))$ (by construction $b^j_k$ converges to some $a_j\notin\partial C^1$ as $k\to \infty$ so for large $k$ and small $\epsilon$, $B_\epsilon(b^j_k)\subset C^1$ and the difference set above is smooth; clearly this restriction of $\bP_k$ is a weak immersion without branch points); as $\epsilon \to 0$ we obtain 
\be\label{eq:MFBoundary}
4 \pi\leq \int_{C^1} |\vec{H}_{\vec{I}\circ \bP_k}|^2 dvol_{g_k} + 2 \frac{{\cal H}^1(\bP_k (\partial C^1))}{d( \bP_k(\partial C^1)\bigcup (\cup_{j=1}^{L_k} \bP_k(b^j_k)),\bP_k(C^1))}.
\ee

Observe that 
\be\label{eq:d>0}
\liminf_{k\to \infty} d\left( \bP_k(\partial C^1)\bigcup (\cup_{j=1}^{L_k} \bP_k(b^j_k)),\bP_k(C^1)\right) >0.
\ee
Indeed, if up to subsequences $d( \bP_k(\partial C^1)\bigcup (\cup_{j=1}^{L_k} \bP_k(b^j_k)),\bP_k(C^1))\to 0$ then, since by \eqref{eq:partialCto0} we have ${\cal H}^1 (\bP_k(\partial C^1))\to 0$, it is easy to see that  we would have also $\diam_M(\bP_k(C^1))\to 0$; contradicting \eqref{eq:lbd}.

Now, using \eqref{eq:d>0} and \eqref{eq:partialCto0}, formula \eqref{eq:MFBoundary} gives
\be\label{eq:1}
4\pi - \frac{1}{n-1} \leq \int_{C^1} |\vec{H}_{\vec{I}\circ \bP_k}|^2 dvol_{g_k} \quad \text{for large } k. 
\ee
In other words, for large $k$ we isolated a region $C^1\subset \R^2$ containing $q^1_k$, with distance at least $\frac{1}{3}$ from all the other points $q^i_k$, with Willmore energy at least  $4\pi - \frac{1}{n-1}$.
 \newline
 \newline
STEP 2: we continue the proof by induction.

\emph{Base of the induction}. Let $2\leq i_0\leq n-1$ and assume the following properties are satisfied:
\\1) for every $ i<i_0$ and every $k$ up to subsequences there exists a smooth open subset $C^i_k\subset B_{r_i}(q^i_k)\subset \R^2$, $r_i>0$ independent on $k$, such that  
\begin{itemize}
\item [{\rm 1a)}] $q^i_k \in C^i_k$ but $q^j_k \notin \bar{C}^i_k$ for every $i,j< i_0, i\neq j$,
\item [{\rm 1b)}] ${\cal H}^1(\bP_k(\partial C^i_k))\to 0$ as $k\to \infty$,
\item [{\rm 1c)}]  $\partial C^{i}_k \cap \{b^1_k,\ldots, b^{N_k}_k\}=\emptyset$ where, as before, $b^1_k,\ldots ,b^{N_k}_k$ are the branch points of $\bP_k$,
\item [{\rm 1d)}] $4\pi - \frac{1}{n-1} \leq \int_{C^i_k} |\vec{H}_{\vec{I}\circ \bP_k}|^2 dvol_{g_k}$,
\item [{\rm 1e)}]  $C^i_k\cap C^j_k=\emptyset$ for every $i,j < i_0, i\neq j$, 
\item [{\rm 1f)}] if there exists $i_1< i_0$ such that for some $j\geq i_0$ it happens that $\dist_{\R^2}(q^j_k,C^{i_1}_k)\to 0$ as $k\to \infty$ up to subsequences, then for every $i\leq i_1$ we have $\diam_{\R^2} (C^i_k)\to 0$ as $k\to \infty$ on the same subsequence,  
%\item [{\rm 1f)}] for every $i<j<i_0$, if on a subsequence in $k$ we have  $\lim_{k\to \infty}\dist_{\R^2}(C^i_k,C^j_k)=0$ then, on the same subsequence, $\diam_{\R^2} ({C}^i_k)\to 0$,
\end{itemize}
2) $|q^i_k-q^j_k|_{\R^2}\geq 1$ for every $i,j \geq i_0$. 
\newline
\newline
\emph{Inductive step}. For every $k$ there exists a smooth open subset $C^{i_0}_k\subset \R^2$  and there exists $f_k:\R^2 \to \R^2$ composition of dilations and isometries such that called $\tilde{q}^i_k:=f_k^{-1}(q^i_k)$ and $\tilde{C}^i_k:=f_k^{-1}(C^i_k)$ the following properties hold:
\begin{itemize}
\item [{\rm 1a)}] up to relabeling in $j$  the points $\tilde{q}^j_k$, $j\geq i_0$ and up to subsequences in $k$ we have $\tilde{q}^{i_0}_k \in C^{i_0}_k$ but $\tilde{q}^j_k \notin \bar{C}^{i_0}_k$ for every $j\neq i_0$, moreover $C^{i_0}_k\subset B_{r_{i_0}}(\tilde{q}^{i_0}_k)\subset \R^2$ with $r_{i_0}$ independent of $k$, 
\item [{\rm 1b)}] ${\cal H}^1(\bP_k(\partial C^{i_0}_k))\to 0$ as $k\to \infty$,
\item [{\rm 1c)}]  $\partial C^{i_0}_k \cap \{\tilde{b}^1_k,\ldots, \tilde{b}^{N_k}_k\}=\emptyset$ where  $\tilde{b}^1_k:=f_k^{-1}({b}^1_k),\ldots ,\tilde{b}^{N_k}_k:=f_k^{-1}({b}^{N_k}_k)$ are the branch points of $\tilde{\bP}_k:=\bP_k\circ f_k$,
\item [{\rm 1d)}] $4\pi - \frac{1}{n-1} \leq \int_{C^{i_0}_k} |\vec{H}_{\vec{I}\circ \bP_k}|^2 dvol_{g_k}$,
\item [{\rm 1e)}] $C^{i_0}_k\cap \tilde{C}^j_k=\emptyset$ for every $j < i_0$,
\item [{\rm 1f)}] $\liminf_k \dist_{\R^2} (C^{i_0}_k,\tilde{q}^j_k)>0$ for every $j>i_0$,
%\item [{\rm 1f)}] for every $i< i_0$, if on a subsequence in $k$ we have  $\lim_{k\to \infty}\dist_{\R^2}(\tilde{C}^i_k,C^{i_0}_k)=0$ then, on the same subsequence, $\diam_{\R^2} ({C}^i_k)\to 0$
\end{itemize}
2) $|\tilde{q}^i_k-\tilde{q}^j_k|_{\R^2}\geq 1$ for every $i,j \geq i_0+1$. 
\newline
\newline
Let us prove the inductive step assuming the base of the induction is satisfyed.

As in Step 1 for every $k$ consider the two points at minimal distance among $\{q^i_k\}_{i\geq i_0}$; up to subsequences in $k$ and relabeling in $i$ we can assume they are $q^{i_0}_k$ and $q^{i_0+1}_k$:
$$|q^{i_0}_k-q^{i_0+1}_k|_{\R^2}\leq |q^i_k-q^j_k|_{\R^2} \quad \forall k, \quad \forall i,j\geq i_0, i\neq j .$$
 Consider $f_k:\R^2 \to \R^2$ composition of isometries and dilations such that $\tilde{q}^{i_0}_k:=f_k^{-1}(q^{i_0}_k)=(0,0)$ is the origin and $\tilde{q}^{i_0+1}_k:=f_k^{-1}(q^{i_0+1}_k)=(1,0)$. By construction, for fixed $k$, all the  points $\{\tilde{q}^i_k\}_{i\geq i_0}$ are at mutual distance at least $1$ and up to subsequences either $|\tilde{q}^i_k| \to \infty$ as $k\to \infty$ or
 \be\label{eq:lbdistance}
\tilde{q}^i_k \to \tilde{q}^i_{\infty} \quad \text{with} \quad |\tilde{q}^i_{\infty}-\tilde{q}^j_{\infty}|\geq 1 \quad \forall i,j\geq i_0, i\neq j.
\ee
Since we are assuming the thesis is not true, as in Step 1, we have that there exists a finite set of points $\{a_1, \ldots, a_N\}\subset \Sp^2$ such that for any compact subset $K\subset \subset  \Sp^2 \setminus\{a_1,\ldots,a_N\}$, up to subsequences on $k$,  
 \be \label{eq:lambdainfty}
 \tilde{\lambda}_k \to -\infty \text { uniformly on } K
 \ee
 where $\tilde{\lambda}_k=\log|\partial_{x_1} \tilde{\bP}_k|=\log|\partial_{x_2} \tilde{\bP}_k|$ is the conformal factor of $\tilde{\bP}_k:=\bP_k\circ f_k$.
Observe that if $\limsup_{k \to \infty} |q^{i_0}_k-q^{i_0+1}_k|<+\infty$ then the situation described in 1f) of the base of induction is exactly the same for the rescaled quantities $\tilde{q}^j_k, \tilde{C}^i_k$. On the contrary, if for some subsequence in $k$ we have $|q^{i_0}_k-q^{i_0+1}_k|\to +\infty$ then we are rescaling with a diverging ratio and, on the same subsequence, $\lim_{k\to \infty} \diam_{\R^2} (\tilde{C}^i_k)=0$ for all $i<i_0$. Therefore, in both cases, if for some $i_1<i_0$ and some subsequence in $k$ it happens that $\lim_{k\to \infty} \dist_{\R^2} (\tilde{C}^{i_1}_k,\tilde{q}^{i_0}_k)=0$ or $\lim_{k\to \infty} \dist_{\R^2} (\tilde{C}^{i_1}_k,\tilde{q}^{i_0+1}_k)=0$ then on the same subsequence 
\be \label{eq:DiamCTo0}
\lim_{k\to \infty} \diam_{\R^2} (\tilde{C}^i_k)=0 \quad \forall i \leq i_1.
\ee
Hence we are left to two possibilities:

Case a): $\tilde{q}^i_{\infty}\neq (0,0),(1,0)$ ( recall that $(0,0)=\tilde{q}^{i_0}_k, (1,0)=\tilde{q}^{i_0+1}_k$ for every $k$). Then by the previous discussion, on the considered subsequence in $k$, we have 

\be \label{eq:casea}
\liminf_{k\to \infty} \dist_{\R^2} (\tilde{C}^{i}_k,\tilde{q}^{i_0}_k)>0 \quad \text{ and } \quad \liminf_{k\to \infty} \dist_{\R^2} (\tilde{C}^{i}_k,\tilde{q}^{i_0+1}_k)>0 \quad \text{ for all } i<i_0.
\ee

Case b): for some $i_1<i_0$ we have $\tilde{q}^{i_1}_{\infty}= (0,0)$ or $\tilde{q}^{i_1}_{\infty}=(1,0)$ then, by the previous discussion, \eqref{eq:DiamCTo0} holds.

Let us first consider case a). The situation is analogous to Step 1 since for all $i<i_0$ the sets  $\tilde{C}^i_k$ are at uniform strictly positive distance from $\tilde{q}^{i_0}_k$ and $\tilde{q}^{i_0+1}_k$. From the iterative construction of $\tilde{C}^i_k$ it is clear that, for every $k$, the set $(\R^2\setminus\bigcup_{i<i_0} \bar{\tilde{C}}^i_k)\ni \{\tilde{q}^{i_0}_k,\tilde{q}^{i_0+1}_k\}$ is connected and that it is possible to construct two smooth open subsets $C^{i_0}$ and $C^{i_0+1}$ such that
\begin{itemize}
\item $\bar{C}^{i_0}$ and $\bar{C}^{i_0+1}$ are diffeomorphic to closed balls,
\item the intersection of $\bar{C}^{i_0}$ and $\bar{C}^{i_0+1}$ consists of just one point, say $q_0$:
\be\label{eq:intersection}
\bar{C}^{i_0} \cap \bar{C}^{i_0+1}=\{q_0\}=\partial C^{i_0}\cap \partial C^{i_0+1},
\ee
\item $\tilde{q}^{i_0}_k=(0,0)\in C^{i_0}$ and $\tilde{q}^{i_0+1}_k=(1,0)\in C^{i_0+1}$,   
\item $\left(\partial C^{i_0} \cup \partial C^{i_0+1}\right) \cap \{a_1,\ldots,a_N\}=\emptyset$, and $\left(\partial C^{i_0}\cup \partial C^{i_0+1}\right)\cap \{\tilde{b}^1_k,\ldots,\tilde{b}^{N_k}_k\}=\emptyset$, where  $\{\tilde{b}^1_k,\ldots,\tilde{b}^{N_k}_k\}$ are the  branch points of $\tilde{\bP}_k=\bP_k\circ f_k$,
\item $\liminf_{k \to \infty } \dist_{\R^2} (\tilde{q}^j_k, C^{i_0}\cup C^{i_0+1})> 0$ for every $j> i_0+1$  (this can be done thanks to \eqref{eq:lbdistance}),
\item $\liminf_{k\to \infty} \dist_{\R^2}(\tilde{C}^i_k, C^{i_0} \cup C^{i_0+1})>0$ for all $i<i_0$ (this can be done thanks to \eqref{eq:casea}).
\end{itemize}

Now exactly as in Step 1, we have that ${\cal H}^1\left(\bP_k (\partial C^{i_0}) \cup \bP_k (\partial C^{i_0+1}) \right) \to 0$ and  at least one between $\bP_k(C^{i_0})$ and $\bP_k(C^{i_0+1})$ has diameter at least $\frac{1}{2nC}$, say $\bP_k(C^{i_0})$:
$\diam_M(\bP_k(C^{i_0}))\geq \frac{1}{2nC}$.
As in Step 1 we can pass to the limit in the inequality \eqref{eq:MFBoundary} and conclude that
\be\label{eq:}
4\pi - \frac{1}{n-1} \leq \int_{C^{i_0}} |\vec{H}_{\vec{I}\circ \bP_k}|^2 dvol_{g_k} \quad \text{for large } k. 
\ee
In other words, for large $k$ we isolated another region $C^{i_0}\subset \R^2$ containing $q^{i_0}_k$, disjoint from the previous $\{\tilde{C}^i_k\}_{i<i_0}$, with Willmore energy at least  $4\pi - \frac{1}{n-1}$. Observe that by construction 1a)$\ldots$1f) and 2) of the inductive step are satisfied.
 \newline

Proof in case b). Let us call $I_0:=\{i<i_0 \text{ such that }\tilde{q}^i_{\infty}=(0,0)=\tilde{q}^{i_0}_k\}$, $I_1:=\{i<i_0 \text{ such that }\tilde{q}^i_{\infty}=(1,0)=\tilde{q}^{i_0+1}_k\}$ and $I_2:=\{1,\ldots,i_0-1\}\setminus (I_0\cup I_1)$; by assumption at least one between $I_0$ and $I_1$ is non empty. Observe that, up to subsequences in $k$, we have 
\begin{eqnarray}
\forall i \in I_0 &&   \dist_{\R^2} (\tilde{C}^{i}_k,\tilde{q}^{i_0}_k)\to 0 \quad \text{and} \quad \diam_{\R^2} (\tilde{C}^i_k)\to 0, \label{I0}\\
\forall i \in I_1 &&  \dist_{\R^2} (\tilde{C}^{i}_k,\tilde{q}^{i_0+1}_k)\to 0 \quad \text{and} \quad \diam_{\R^2} (\tilde{C}^i_k)\to 0, \label{I1}\\
\forall i \in I_2 &&   \liminf_{k\to \infty} \dist_{\R^2} (\tilde{C}^{i}_k,\tilde{q}^{i_0}_k)> 0 \quad \text{and} \quad \liminf_k \dist_{\R^2} (\tilde{C}^{i}_k,\tilde{q}^{i_0+1}_k)> 0. \label{I2}
\end{eqnarray}
At first we do not consider $I_0$ and $I_1$  and construct $C^{i_0}$ and $C^{i_0+1}$ as in case a) satisfying the same itemization with the only difference that in the last item we ask  (it can be done thanks to \eqref{I2})
$$\liminf_{k\to \infty} \dist_{\R^2}(\tilde{C}^i_k, C^{i_0} \cup C^{i_0+1})>0 \quad  \forall i\in I_2.$$
Now, for every $k$, call
\be\label{def:Cik}
C^{i_0}_k:= C^{i_0}\setminus \left( \bigcup_{i\in I_0} \bar{\tilde{C}}^i_k \right) \quad \text{and} \quad C^{i_0+1}_k:=C^{i_0+1}\setminus \left( \bigcup_{i\in I_1} \bar{\tilde{C}}^i_k \right).
\ee
Observe that, by 1a),  $\tilde{q}^{i_0}_k=(0,0)\in C^{i_0}_k$ and $\tilde{q}^{i_0+1}_k=(1,0)\in C^{i_0+1}_k$ and, by the construction of $C^{i_0},C^{i_0+1}$ and since by $1b)$ we have ${\cal H}^1(\tilde{\bP}_k(\partial \tilde{C}^i_k))={\cal H}^1(\bP_k(\partial {C}^i_k)) \to 0$ for all $i<i_0$, we still have that
\be \label{eq:H1to0}
 {\cal H}^1(\tilde{\bP}_k(\partial {C}^{i_0}_k)) \to 0 \quad \text{and} \quad {\cal H}^1(\tilde{\bP}_k(\partial {C}^{i_0+1}_k)) \to 0.
\ee

Moreover, exactly as before, at least one between $\tilde{\bP}_k(C^{i_0}_k)$ and $\tilde{\bP}_k(C^{i_0+1}_k)$ has diameter at least $\frac{1}{2nC}$, say $\tilde{\bP}_k(C^{i_0}_k)$:
\be \label{eq:lbdCi0k}
\diam_M(\bP_k(C^{i_0}_k))\geq \frac{1}{2nC}.
\ee 
Now observe that by construction of $C^{i_0}$ and by assumption 1c), for every $k$ we have 
$$\partial C^{i_0}_k\cap \{\tilde{b}^1_k,\ldots,\tilde{b}^{N_k}_k\}=\emptyset$$
where $\{\tilde{b}^1_k,\ldots,\tilde{b}^{N_k}_k\}$ are the branch points of $\tilde{\bP}_k$. Then, manipulating formula \eqref{eq:MFBoundary1} as in Step 1 for the branched case, called $\{\tilde{b}^1_k,\ldots,\tilde{b}^{L_k}_k\}$  the branch points contained in $C^{i_0}_k$ we obtain that
\be\label{eq:MFBoundary2}
4 \pi\leq \int_{C^{i_0}_k} |\vec{H}_{\vec{I}\circ \tilde{\bP}_k}|^2 dvol_{g_k} + 2 \frac{{\cal H}^1(\tilde{\bP}_k (\partial C^{i_0}_k))}{d( \bP_k(\partial C^{i_0}_k)\bigcup (\cup_{j=1}^{L_k} \tilde{\bP}_k(\tilde{b}^j_k)),\tilde{\bP}_k(C^{i_0}_k))}.
\ee
As before, since $\diam_M(\tilde{\bP}_k(C^{i_0}_k))\geq\frac{1}{2nC}$ and ${\cal H}^1(\tilde{\bP}_k (\partial C^{i_0}_k)) \to 0$, then 
$$\liminf_{k \to \infty} d\left( \bP_k(\partial C^{i_0}_k)\bigcup (\cup_{j=1}^{L_k} \tilde{\bP}_k(\tilde{b}^j_k)),\tilde{\bP}_k(C^{i_0}_k)\right)>0$$ and passing to the limit for $k\to \infty$ in \eqref{eq:MFBoundary2} we get
\be\label{eq:finalCio}
4\pi - \frac{1}{n-1} \leq \int_{C^{i_0}_k} |\vec{H}_{\vec{I}\circ \tilde{\bP}_k}|^2 dvol_{g_k} \quad \text{for large } k. 
\ee
In other words, for large $k$ we isolated another region $C^{i_0}_k\subset \R^2$ containing $q^{i_0}_k$, disjoint from the previous $\{\tilde{C}^i_k\}_{i<i_0}$, with Willmore energy at least  $4\pi - \frac{1}{n-1}$. Observe that by construction 1a)$\ldots$1f) and 2) are satisfied, so we completed the proof of the inductive step.
 \newline
 \newline
 Let us summarize and conclude the proof: we showed that if the base of the induction is satisfyed for a certain $2\leq i_0\leq n-1$ then we proved that the inductive step is true and so, from the procedure described above, it is clear that the base of induction is satisfied also for $i_0+1$. The iteration procedure stops for $i_0=n-1$ and at that point, for large $k$, we constructed $n-1$ disjoint subsets $C^{1}_k,\ldots,C^{n-1}_k$ each one carrying a Willmore energy of at least $4 \pi-\frac{1}{n-1}$ then
 \be\label{eq:contrad}
 \int_{\Sp^2}  |d \bn_{\vec{I}\circ \bP_k}|_{g_k}^2 dvol_{g_k}\geq \int_{\bigcup_{i=1}^{n-1}C^{i}_k} |d \bn_{\vec{I}\circ \bP_k}|_{g_k}^2 dvol_{g_k}\geq 4\pi(n-1)-1.
 \ee
 Since $n$ is arbitrary large, the lower bound \eqref{eq:contrad} clearly contradicts the upper bound \eqref{eq:bdd2ffRp2}.  
\end{proof}

\section{A Diameter Tracking Procedure.}
\reset
The purpose of the present section is to prove the following lemma.

\begin{Lm}
\label{lm-V.1}{ \bf[Diameter tracking procedure]}
Let $\{\vec{\Phi}_k\}_{k \in \N}\subset {\mathcal F}_{\Sp^2}$ be a sequence of conformal weak, possibly branched, immersions into $M^m$. Assume that
\be
\label{V.a.l.1}
\limsup_{k\rightarrow +\infty}\int_{\Sp^2}\lf[1+|D\vec{n}_{\vec{\Phi}_k}|_h^2\rg]\ dvol_{g_k}<+\infty\quad,
\ee
where $dvol_{g_k}$ denotes the volume form associated to the induced metric $g_k:=\vec{\Phi}_k^\ast h$ by $\vec{\Phi}_k$ on 
$\Sp^2$ and $\vec{H}_{\vec{\Phi}_k}$ is the mean curvature vector associated to the immersion $\vec{\Phi}_k$. Let $a\in \Sp^2$, $\delta_k\rightarrow 0$ and $\ep_k\rightarrow 0$
and a finite family of sequences of points $(a^i_k)_{i=1\cdots N}$ together with a finite family of sequences of positive radii $(r_k^i)_{i=1\cdots N}$
satisfying the following conditions
\begin{eqnarray}
&&\forall\, i\in\{1\cdots N\}\quad\quad\lim_{k\rightarrow +\infty}\frac{|a_k^i-a|}{\ep_k\,\delta_k}+\frac{r^j_k}{\ep_k^2\,\delta_k}=0\quad \label{V.a.b.1}, \\
&&\forall\, i\ne j\quad\quad\lim_{k\rightarrow +\infty}\frac{|a_k^i-a_k^j|}{\ep_k^{-1}(r_k^i-r_k^j)} =0 \label{V.a.c.1} \\
&&\lim_{k\rightarrow +\infty}\int_{B_{\delta_k}(a)\setminus B_{\ep_k\, \delta_k}(a)}\lf[1+|{\mathbb I}_{\vec{\Phi}_k}|_h^2\rg]\ dvol_{g_k}=0 \label{V.a.d.1}\\
&&\forall\, i\in\{1\cdots N\}\quad\quad\lim_{k\rightarrow +\infty}\int_{B_{\ep_k^{-1}r^i_k}(a_k^i)\setminus B_{r_k^i}(a_k^i)}\lf[1+|{\mathbb I}_{\vec{\Phi}_k}|_h^2\rg]\ dvol_{g_k}=0\quad.\label{V.a.e.1}
\end{eqnarray}
Assume that $\vec{\Phi}_k$ has no branched points in each annulus $B_{\al^{-1}\,r^i_k}(a_k^i)\setminus B_{r_k^i}(a_k^i)$ 
as well as in $B_{\delta_k}(a)\setminus B_{\al\delta_k}(a)$ for any $0<\al<1$ and for $k$ large enough . Suppose
\be
\label{V.a.h.1}
\liminf_{k\rightarrow +\infty}diam\lf(\vec{\Phi}_k\lf(B_{\ep_k\, \delta_k}(a)\setminus \bigcup_{i=1}^N B_{\ep_k^{-1}r^i_k}(a_k^i)\rg)\rg)>0\quad,
\ee
then, modulo extraction of a subsequence, there exists a sequence of positive M\"obius transformations $f_k\in{\mathcal M}^+(\Sp^2)$, there exists $Q\in {\N}$ and  $Q$ points
$b_1,\cdots, b_Q$ and $\vec{\xi}_\infty\in {\mathcal F}_{\Sp^2}$ such that
\be
\label{V.a.f.1}
\vec{\xi}_k:=\vec{\Phi}_k\circ f_k\rightharpoonup \vec{\xi}_\infty\quad\quad\mbox{ weakly in }W^{2,2}_{loc}(\Sp^2\setminus\{b_1,\cdots, b_Q\}),
\ee
moreover for  any compact $K\subset \Sp^2\setminus\{b_1\cdots b_Q\}$
\be
\label{V.a.g.1}
\limsup_{k\rightarrow +\infty}\|\log| d(\vec{\Phi}_k\circ f_k)|\|_{L^\infty(K)}<+\infty\quad.
\ee
Finally there exists $s_k\rightarrow 0$ such that
\be
\label{V.a.j.1}
\vec{\Phi}_k\circ f_k\longrightarrow \vec{\xi}_\infty\quad\quad\mbox{ unif. in $C^0$ on } \Sp^2\setminus \bigcup_{j=1}^Q B_{s_k}(b_j)\quad, 
\ee
and for any $i\in\{1\cdots N\}$ there exists $j_i\in\{1\cdots Q\}$
\be
\label{V.a.k.1}
f_k^{-1}(B_{r^i_k}(a_k^i))\subset B_{s_k}(b_{j_i})
\ee
moreover there exists also $j_0\in\{1\cdots Q\}$ such that
\be
\label{V.a.k.2}
f_k^{-1}(\Sp^2\setminus B_{\delta_k}(a))\subset B_{s_k}(b_{j_0})\quad.
\ee 
\hfill $\Box$
\end{Lm}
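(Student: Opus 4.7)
The plan is to build $f_k$ by pre-composition with a Möbius transformation that rescales the ``middle region'' $B_{\delta_k}(a)\setminus\bigcup_i B_{\epsilon_k^{-1}r_k^i}(a_k^i)$ to unit scale, then to apply the results of the previous two sections (Proposition~\ref{Prop:ConcComp} and Lemma~\ref{lm-IV.1}) to the rescaled sequence $\vec{\xi}_k=\vec{\Phi}_k\circ f_k$. First, working in a stereographic chart centered away from $a$, I would take $f_k$ to be of the form $f_k(x)=a+\epsilon_k\delta_k\,x$ (followed, if necessary, by an additional element of ${\mathcal M}^+(\Sp^2)$). With this choice, (\ref{V.a.b.1}) and (\ref{V.a.c.1}) guarantee that $f_k^{-1}(a_k^i)$ stays in a bounded set (it tends to $0$), the balls $f_k^{-1}(B_{r_k^i}(a_k^i))$ shrink to points because $r_k^i/(\epsilon_k\delta_k)\le r_k^i/(\epsilon_k^2\delta_k)\to 0$, and $f_k^{-1}(\Sp^2\setminus B_{\delta_k}(a))\subset \Sp^2\setminus B_{1/\epsilon_k}(0)$ shrinks to the ``point at infinity'' (the antipodal pole in $\Sp^2$).

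Next I would verify that $\vec{\xi}_k$ falls under the hypotheses of Lemma~\ref{lm-IV.1}: conformal invariance of $\int|D\vec n|^2\,dvol_g$ (together with the controlled area) transfers (\ref{V.a.l.1}), while the diameter hypothesis (\ref{V.a.h.1}) gives a uniform lower bound for $\diam_M(\vec{\xi}_k(\Sp^2))$ since the image of the ``good'' middle annulus is contained in the total image. Applying the Good Gauge Extraction Lemma (composing with a further M\"obius transformation if the raw rescaling does not already give a bounded conformal factor) I obtain a subsequence whose conformal factor $\tilde{\lambda}_k=\log|d\vec{\xi}_k|$ is uniformly bounded in $L^\infty_{loc}(\Sp^2\setminus\{b_1,\dots,b_Q\})$ for some finite set of concentration points $\{b_1,\dots,b_Q\}$. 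Together with Proposition~\ref{Prop:ConcComp} and the $W^{1,2}$-bound on the Gauss map, elliptic theory applied to the conformal immersion system (Liouville equation \eqref{eq:PDElamb} plus the Codazzi--Mainardi equation for $\vec n$) bootstraps this to weak $W^{2,2}_{loc}$ convergence $\vec{\xi}_k\rightharpoonup \vec{\xi}_\infty$ on $\Sp^2\setminus\{b_1,\dots,b_Q\}$, with $\vec{\xi}_\infty\in{\cal F}_{\Sp^2}$; this establishes (\ref{V.a.f.1}) and (\ref{V.a.g.1}).

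The hardest step is the topological identification (\ref{V.a.k.1})--(\ref{V.a.k.2}): showing that the ``excluded'' regions are pushed into arbitrarily small neighborhoods of the $b_j$'s by $f_k^{-1}$. The key observations are that (a) on each annulus $B_{\epsilon_k^{-1}r_k^i}(a_k^i)\setminus B_{r_k^i}(a_k^i)$ and $B_{\delta_k}(a)\setminus B_{\epsilon_k\delta_k}(a)$ energy and area vanish in the limit by \eqref{V.a.d.1}--\eqref{V.a.e.1}, and there are no branch points there; and (b) the pre-images $f_k^{-1}(a_k^i)$ converge to some points in $\Sp^2$, while $f_k^{-1}(\Sp^2\setminus B_{\delta_k}(a))$ shrinks to a single pole. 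For each such cluster point $p$, one needs to show $p$ cannot be a regular point of $\vec{\xi}_\infty$: if it were, the $C^0$ convergence coming from the uniform conformal factor bound would force $\vec{\xi}_k$ to be approximately isothermal on a fixed disk around $p$, contradicting the fact that all the $\vec{\Phi}_k$-image of the corresponding disk is degenerating in area yet carries a non-trivial topological piece (the small ball with essentially flat annular collar). Thus every cluster point of $f_k^{-1}(a_k^i)$ and of $f_k^{-1}(\Sp^2\setminus B_{\delta_k}(a))$ belongs to $\{b_1,\dots,b_Q\}$, and choosing $s_k\to 0$ slowly enough that it dominates all the $f_k^{-1}$-images gives (\ref{V.a.k.1}) and (\ref{V.a.k.2}).

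Finally, the uniform $C^0$ convergence (\ref{V.a.j.1}) away from shrinking balls around the $b_j$'s follows by combining the weak $W^{2,2}_{loc}$ convergence, Sobolev embedding $W^{2,2}\hookrightarrow C^0$ in dimension two, and the uniform $L^\infty$ bound on the conformal factor, letting $s_k\to 0$ sufficiently slowly. The main technical obstacle is the gauge/normalization step of the second paragraph: the scaling M\"obius transformation adapted to the geometric data of $(a,a_k^i,r_k^i,\delta_k,\epsilon_k)$ a priori has no reason to be the ``good gauge'' provided by Lemma~\ref{lm-IV.1}, and one has to argue that composing the two Möbius transformations still satisfies the target statements (\ref{V.a.k.1})--(\ref{V.a.k.2}), using the rigidity implied by the diameter hypothesis to prevent the good-gauge correction from undoing the rescaling.
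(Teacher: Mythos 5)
Your overall skeleton (rescale by the dilation adapted to $(a,\delta_k,\ep_k)$, then invoke the good-gauge extraction Lemma~\ref{lm-IV.1} and Proposition~\ref{Prop:ConcComp}) matches the start of the paper's argument, but there is a genuine gap exactly at the point you flag as ``the main technical obstacle'', and it is not repaired by the rigidity appeal you make. Lemma~\ref{lm-IV.1} only asserts the existence of \emph{some} M\"obius gauge in which the conformal factors are locally bounded; nothing forces that gauge to track the middle region $B_{\ep_k\delta_k}(a)\setminus\bigcup_i B_{\ep_k^{-1}r_k^i}(a_k^i)$. Hypothesis (\ref{V.a.h.1}) gives a diameter lower bound only for that region and imposes no upper bound on the diameters of $\vec{\Phi}_k(B_{r_k^i}(a_k^i))$ or of $\vec{\Phi}_k(\Sp^2\setminus B_{\delta_k}(a))$; in the intended application (Lemma~\ref{lm-VI.1}) these regions do carry further non-degenerate bubbles. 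So the good-gauge M\"obius map may perfectly well zoom into one of the inner balls or into the exterior region, in which case $f_k^{-1}(B_{r_k^i}(a_k^i))$ (or $f_k^{-1}(\Sp^2\setminus B_{\delta_k}(a))$) occupies a fixed portion of $\Sp^2$ rather than shrinking, and (\ref{V.a.k.1})--(\ref{V.a.k.2}) fail. Your argument for the topological identification already presupposes that ``$f_k^{-1}(a_k^i)$ converge to some points'' and that the exterior shrinks to a pole; this is true for the bare dilation but is precisely what is no longer known once you compose with an uncontrolled gauge correction, so the step is circular.

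The paper's proof supplies the missing mechanism: before extracting a good gauge it performs the ``cutting and filling'' surgery of Lemma~\ref{lm-V.2} on each of the $N+1$ degenerate annular necks (around the rescaled $a_k^i$'s and around the antipode of $a$), which is legitimate thanks to (\ref{V.a.d.1})--(\ref{V.a.e.1}) and the no-branch-point assumption on those annuli. The surgery replaces the inner balls and the exterior by caps of vanishing diameter and area, so that the modified immersion $\vec{\xi}_k$ has \emph{only} the middle region carrying diameter; Lemma~\ref{lm-IV.1} applied to $\vec{\xi}_k$ is then forced to capture that region. A second piece of work you omit is that the surgered immersion is only related to $\vec{\Phi}_k$ through quasi-conformal bilipschitz homeomorphisms $\Psi_k$, so the composition $\Psi_k^{-1}\circ u_k$ is not conformal; the paper shows its Beltrami coefficient is supported in a fixed ball with $L^\infty$ norm tending to $0$, corrects it by the normal solution of a Beltrami equation (so that the corrected map is a genuine element of ${\mathcal M}^+(\Sp^2)$), checks via (\ref{V.a.39}) and Proposition~\ref{Prop:ConcComp} that the limit is not lost, and only then defines $f_k$ as the full composition. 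Without the surgery and this quasi-conformal correction your construction neither guarantees (\ref{V.a.k.1})--(\ref{V.a.k.2}) nor produces an $f_k$ that is simultaneously M\"obius and adapted to the tracked region.
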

\noindent  Lemma~\ref{lm-V.1} is  a consequence of the combination of a so called ``cutting and filling Lemma'' together with the ``good-gauge extraction Lemma''~\ref{lm-IV.1}.
We first present the  ``cutting and filling Lemma'' and its proof before to end this section with the proof of lemma~\ref{lm-V.1}.
\begin{Lm} {\bf[Cutting and filling Lemma]}
\label{lm-V.2}
Let $\{\vec{\Phi}_k\}_{k \in \N} \subset {\mathcal F}_\Sp^2$ be a sequence of conformal weak, possibly branched, immersions $\vec{\Phi}_k$ of  into $M^m$. Assume that
\be
\label{V.a.1}
\limsup_{k\rightarrow +\infty}\int_{\Sp^2}\lf[1+|D\vec{n}_{\vec{\Phi}_k}|_h^2\rg]\ dvol_{g_k}<+\infty\quad,
\ee
where $dvol_{g_k}$ denotes the volume form associated to the induced metric $g_k:=\vec{\Phi}_k^\ast h$ by $\vec{\Phi}_k$ on 
$\Sp^2$ and $\vec{H}_{\vec{\Phi}_k}$ is the mean curvature vector associated to the immersion $\vec{\Phi}_k$. Let $a\in \Sp^2$ and $s_k, t_k\rightarrow 0$ be  such that
\be
\label{V.a.2}
\frac{t_k}{s_k}\longrightarrow 0\quad,
\ee
and
\be
\label{V.a.3}
\lim_{k\rightarrow +\infty}\int_{B_{s_k}(a)\setminus B_{t_k}(a)}\lf[1+|{\mathbb I}_{\vec{\Phi}_k}|_h^2\rg]\ dvol_{g_k}=0
\ee
and assume that $\vec{\Phi}_k$ has no branch points in $B_{s_k}(a)\setminus B_{\al\, s_k}(a)$ for any $0<\al<1$ and $k$ large enough. Then there exists a conformal immersion $\vec{\xi}_k$ from $\Sp^2$ into $M^m$ and a sequence of quasi conformal bilipshitz homeomorphisms $\Psi_k$ of $\Sp^2$,  converging in $C^{0}$ norm over $\Sp^2$
to the identity map, such that
\[
\vec{\xi}_k\circ\Psi_k=\vec{\Phi}_k\quad\quad\mbox{ in }\Sp^2\setminus B_{s_k}(a)
\]
and
\[
\lim_{k\rightarrow +\infty}diam (\vec{\xi}_k\circ\Psi_k(B_{s_k}(a))=0\quad, \quad \lim_{k\rightarrow +\infty}Area(\vec{\xi}_k\circ\Psi_k(B_{s_k}(a))=0
\]
moreover
\be
\label{V.a.3.b}
\lim_{k\rightarrow +\infty}\int_{B_{s_k}(a)}\lf[1+|{\mathbb I}^0_{{\vec{\xi}_k\circ\Psi_k}}|_h^2\rg]\ dvol_{g_{{\vec{\xi}_k\circ\Psi_k}}}=0\quad.
\ee
where ${\mathbb I}^0_{{\vec{\xi}_k\circ\Psi_k}}$ is the trace-free second fundamental form.
\hfill $\Box$
\end{Lm}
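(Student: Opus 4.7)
The strategy is to exploit the vanishing area and total curvature on the neck $A_k := B_{s_k}(a) \setminus B_{t_k}(a)$ to show that the curve $\vec{\Phi}_k(\partial B_{s_k}(a))$ is tiny, then to surgically excise $\vec{\Phi}_k|_{B_{s_k}(a)}$ and replace it by a nearly flat cap in geodesic normal coordinates, and finally to uniformize the resulting Lipschitz sphere.

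\emph{Stage 1 (Neck analysis).} Since $\vec{\Phi}_k$ has no branch points in $B_{s_k}(a)\setminus B_{\alpha s_k}(a)$ for every $\alpha\in(0,1)$ and $k$ large, and hypothesis (V.a.3) yields $\int_{A_k}[1+|{\mathbb I}_{\vec{\Phi}_k}|^2]\,dvol_{g_k}\to 0$, I apply H\'elein's moving frame construction (equivalently M\"uller--Sverak isothermic charts) to $\vec{\Phi}_k$ on $A_k$, in the spirit of Proposition~\ref{Prop:ConcComp}. This produces an $L^\infty$ bound on the conformal factor $\lambda_k$ modulo an additive constant, and the smallness of the area forces that constant to $-\infty$. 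Hence $\diam_M \vec{\Phi}_k(A_k)\to 0$, so that $\vec{\Phi}_k(\partial B_{s_k}(a))$ lies in a geodesic ball $B^M_{\rho_k}(p_k)\subset M$ with $\rho_k\to 0$, and the loop itself has small $W^{1,2}$ norm coming from the frame bound.

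\emph{Stages 2--3 (Cap and conformal gluing).} Working in geodesic normal coordinates at $p_k$ I identify $B^M_{2\rho_k}(p_k)$ with an almost Euclidean ball in $\R^m$. The boundary curve $\vec{\Phi}_k(\partial B_{s_k}(a))$ is then a small Lipschitz loop nearly lying in a reference $2$-plane provided by the tangent frame from Stage 1. I fill $B_{s_k}(a)$ by a $W^{2,2}$ disc $\vec{\phi}^{\,\mathrm{cap}}_k$ whose image is an almost flat graph over this reference plane, interpolating between the prescribed boundary and a flat core by harmonic extension. Inside a ball of geodesic radius $\rho_k$, planar discs in normal coordinates are almost umbilical, with pointwise bound $|{\mathbb I}^0|\le C\|R^M\|_{L^\infty}\rho_k$; consequently the diameter, area and $\int|{\mathbb I}^0|^2\, dvol$ of $\vec{\phi}^{\,\mathrm{cap}}_k(B_{s_k}(a))$ all vanish as $k\to\infty$. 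Glueing $\vec{\phi}^{\,\mathrm{cap}}_k$ to $\vec{\Phi}_k|_{\Sp^2\setminus B_{s_k}(a)}$ along $\partial B_{s_k}(a)$ produces a Lipschitz immersion $\vec{\xi}^{\,\flat}_k:\Sigma_k \to M^m$ of a Lipschitz topological sphere $\Sigma_k$. Uniformizing the induced conformal structure on $\Sigma_k$ (via Ahlfors--Bers) and normalizing by three fixed points chosen away from $a$ yields a bilipschitz quasiconformal $h_k:\Sp^2\to\Sigma_k$ converging uniformly to the identity. Setting $\vec{\xi}_k := \vec{\xi}^{\,\flat}_k\circ h_k$ and defining $\Psi_k$ from $h_k^{-1}$ (extended bilipschitz across $B_{s_k}(a)$) yields the desired objects: $\vec{\xi}_k$ is conformal, $\vec{\xi}_k\circ\Psi_k=\vec{\Phi}_k$ on $\Sp^2\setminus B_{s_k}(a)$, and the estimates on diameter, area and trace-free fundamental form inside $B_{s_k}(a)$ follow from the cap construction.

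\emph{Main obstacle.} The most delicate step is the cap construction: the filling must simultaneously be nearly umbilical, must match the prescribed small but possibly irregular boundary curve with $W^{2,2}$ regularity, and must produce a glued sphere whose uniformization is $C^0$-close to the identity. The near-umbilicity follows from the normal-coordinate expansion of the ambient metric inside tiny balls of $M$ (where any smooth disc graph over a $2$-plane has $|{\mathbb I}^0|$ controlled by ambient curvature times diameter), and the $C^0$-closeness of the uniformization rests on the stability of uniformization under conformal modifications that are localized in a shrinking set, combined with the three-point normalization.
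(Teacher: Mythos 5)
Your Stage 1 is exactly the paper's neck analysis (moving frames on the annulus, conformal factor controlled modulo a constant $\bar\lambda_k\to-\infty$, tiny neck image), but the heart of the lemma is the cap, and there your construction has two genuine gaps. First, the quantity $\int|{\mathbb I}^0|^2\,dvol$ is invariant under dilations of ${\R}^m$, so the fact that the cap sits inside a geodesic ball of radius $\rho_k\to0$ gives \emph{no} control on its trace-free curvature energy: the pointwise bound $|{\mathbb I}^0|\le C\|R^M\|_\infty\rho_k$ is valid for genuinely affine discs in normal coordinates, not for a harmonic extension interpolating an irregular boundary loop to a flat core, and the $W^{1,2}$ smallness of the loop coming from the frame estimate is far too weak to bound the second fundamental form of any filling. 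To control the interpolation region one needs the blown-up boundary data to be close, in a $C^1$/$W^{2,2}$-trace sense, to that of an exactly planar conformal disc; this is precisely what the paper obtains by rescaling by $e^{-\bar\lambda_k}$, showing the blow-up limit $\hat{\vec{\Phi}}_\infty$ is a holomorphic map into a fixed $2$-plane, selecting a good radius by Fubini, and gluing through a biharmonic extension that matches both boundary values and normal derivatives and converges in $C^{1,\alpha}$ (and strongly in $W^{2,2}$) to the planar limit. Second, the blow-up limit is holomorphic with a zero or pole of order $\theta_0$ at the puncture, so the rescaled boundary curve may wind $\theta_0\ne 1$ times (and $\theta_0$ may be negative); such a loop simply cannot be spanned by a graph over a reference $2$-plane. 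The paper's cap is instead $\pi_k^{-1}\circ\hat{\vec{\Phi}}_\infty$ itself, i.e.\ the holomorphic limit pushed onto a huge round sphere $S^2_k$ tangent to the limit plane: this is compact even when $\theta_0<0$, exactly umbilic (so ${\mathbb I}^0\equiv0$ on the core), conformal, and matches the transition annulus in $C^1$, which is what makes \eqref{V.a.3.b} provable.

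The same missing quantitative input undermines your final uniformization step. An Ahlfors--Bers (measurable Riemann mapping) normalization converges uniformly to the identity only if the Beltrami coefficient of the glued parametrization is uniformly bounded away from $1$ and tends to $0$ (at least a.e.); a harmonic extension of a Lipschitz loop need not even be an immersion, so the glued object may fail to belong to ${\mathcal F}_{\Sp^2}$, its dilatation may be unbounded, and there is then no reason for your map $h_k$ to be bilipschitz or $C^0$-close to the identity, nor for the exact identity $\vec{\xi}_k\circ\Psi_k=\vec{\Phi}_k$ outside $B_{s_k}(a)$ to survive. In the paper this is exactly where the $C^{1,\alpha}$ convergence of the biharmonic transition to the conformal planar limit is used: the Beltrami coefficient $\sigma_k$ of the glued map is supported in the shrinking transition annulus and satisfies $\|\sigma_k\|_{L^\infty}+\|\nabla\sigma_k\|_{L^2}\to0$, so the normal solution of the Beltrami equation converges to the identity in $C^{0,\alpha}$ and produces the quasiconformal bilipschitz $\Psi_k\to \mathrm{id}$ and the conformal $\vec{\xi}_k$. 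Without an analogue of these estimates for your cap, the conclusion of the lemma does not follow from your construction.
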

\noindent{\bf Proof of Lemma~\ref{lm-V.2}.} 
For  simplicity of the presentation we prove the lemma for a sequence of conformal immersions $\vec{\Phi}_k$ into the unit ball of ${\R}^m$ since the proof
for immersions into $M^m$ is fundamentally of identical nature. We apply successively the stereographic projection $\pi$ from $\Sp^2$ into ${\C}$ that sends
$a$ to the origin $0$ of the complex plane  and the dilation of radius $\simeq 1/s_k$ such that the composition of these two maps
is sending $B_{s_k}(a)\setminus B_{t_k}(a)$ into $B_1(0)\setminus B_{\rho_k}(0)$ with $\rho_k\rightarrow 0$. We keep denoting $\vec{\Phi}_k$ the conformal map
that we obtained after composing the original $\vec{\Phi}_k$ with these  two maps. Since for any $1>\al>0$ and $k$ large enough $\vec{\Phi}_k$ realizes a lipschitz conformal immersion of
the annulus $B_1(0)\setminus B_{\al}(0)$ into the unit ball of ${\R}^m$, there exists $\la_k\in L^\infty(B_1(0)\setminus B_{\al}(0))$ such that
\[
e^{\la_k}=|\p_{x_1}\vec{\Phi}_k|=|\p_{x_2}\vec{\Phi}_k|\quad\quad\mbox{ in }B_1(0)\setminus B_{\al}(0)\quad.
\]
From (\ref{V.a.3}) we deduce that
\be
\label{V.a.4}
\lim_{k\rightarrow+\infty}\int_{B_1(0)\setminus B_{\rho_k}(0)}|d\vec{n}_{\vec{\Phi}_k}|^2_{g_k}\ dvol_{g_k}=\lim_{k\rightarrow+\infty}\int_{B_1(0)\setminus B_{\rho_k}(0)}|\nabla \vec{n}_{\vec{\Phi}_k}|^2\ dx=0 \quad .
\ee
Using the argument in \cite{BR3} - first by Lemma  VI.1 of \cite{BR3} we extend $\vec{n}_{\Phi_k}$ in $B_{\rho_k}$ with energy control and then we use H\'elein's construction
of energy controlled moving frame, see Theorem V.2.1 in \cite{He} -  we deduce the existence of an orthonormal frame $(\vec{e}_{1,k},\vec{e}_{2,k})$ on $B_1(0)\setminus B_{\rho_k}(0)$  such that 
\[
\star(\vec{e}_{1,k}\wedge\vec{e}_{2,k})=\vec{n}_{\vec{\Phi}_k}
\]
and
\be
\label{V.a.5}
\int_{B_1(0)\setminus B_{\rho_k}(0)}\sum_{i=1}^2|\nabla\vec{e}_{i,k}|^2\ dx\le C\ \int_{B_1(0)\setminus B_{\rho_k}(0)}|\nabla \vec{n}_{\vec{\Phi}_k}|^2\ dx\quad.
\ee
A classical computation (see \cite{He} and \cite{Ri1}) gives  
\be
\label{V.a.5.a}
\Delta\la_k=\nabla^\perp\vec{e}_{1,k}\cdot\nabla\vec{e}_{2,k}\quad.
\ee
Let $\mu_k$ satisfy
\be
\label{V.a.6}
\lf\{
\begin{array}{l}
\ds\Delta\mu_k=\nabla^\perp\vec{e}_{1,k}\cdot\nabla\vec{e}_{2,k}\quad\quad\mbox{ in }B_1(0)\setminus B_{\al}(0)\\[5mm]
\ds\mu_k=0\quad\quad\mbox{ on }\p (B_1(0)\setminus B_{\al}(0)) \quad .
\end{array}
\rg.
\ee
Wente's inequality (see \cite{He}, \cite{Ge}, \cite{Top}) gives the existence of a constant $C>0$ independent of $\al$ such that
\be
\label{V.a.7}
\begin{array}{l}
\ds\|\mu_k\|_{L^\infty(B_1(0)\setminus B_{\al}(0)}+\|\nabla\mu_k\|_{L^2(B_1(0)\setminus B_{\al}(0)}\le C\ \|\nabla e_{1,k}\|_{L^2}\ \|\nabla e_{2,k}\|_{L^2}\\[5mm]
\ds\quad\quad\quad\quad\le C'\ \int_{B_1(0)\setminus B_{\rho_k}(0)}|\nabla \vec{n}_{\vec{\Phi}_k}|^2\ dx \quad.
\end{array}
\ee
Since from Lemma~\ref{lem:EstCFBranch} $\nabla\la_k$ is uniformly bounded in $L^{2,\infty}$, using (\ref{V.a.7}), we have that the harmonic function
$\nu_k:=\la_k-\mu_k$ satisfy
\be
\label{V.a.8}
\limsup_{k\rightarrow +\infty}\|\nabla \nu_k\|_{L^{2,\infty}(B_1(0)\setminus B_{\al}(0))}<+\infty\quad.
\ee
Standard elliptic estimates on harmonic function (see for instance \cite{GT}) imply that for any $2\al<\delta<1$
\[
\limsup_{k\rightarrow +\infty}\|\nu_k(x)-\nu_k(y)\|_{L^\infty((B_\delta(0)\setminus B_{2\al}(0))^2)}<+\infty \quad.
\]
Combining this fact with (\ref{V.a.7}) we obtain that there exists a constant $\ov{\la}_k$ satisfying
\be
\label{V.a.9.a}
\lim_{k\rightarrow +\infty}\ov{\la}_k=-\infty\quad,
\ee
and such that for any choice $2\al<\delta<1$
\be
\label{V.a.9}
\limsup_{k\rightarrow+\infty}\|\la_k-\ov{\la}_k\|_{L^\infty(B_\delta(0)\setminus B_{2\al}(0))}\quad.
\ee
We introduce the new map given by
\[
\hat{\vec{\Phi}}_k:= e^{-\ov{\la}_k}\ [\vec{\Phi}_k-\vec{\Phi}_k(0,1/2)]\quad.
\]
Because of (\ref{V.a.9}), it is clear that $\hat{\vec{\Phi}}_k(B_\delta(0)\setminus B_{2\al}(0))\subset B_{R_{\delta,\al}}(0)$ for some $R_{\delta,\al}>0$ and that $\hat{\vec{\Phi}}_k$
converges weakly in $(W^{1,\infty})^\ast$ to some non trivial limiting conformal immersion $\hat{\vec{\Phi}}_\infty$ of $B_\delta(0)\setminus B_{2\al}(0)$ :
\be
\label{V.a.10}
\hat{\vec{\Phi}}_k\rightharpoonup\hat{\vec{\Phi}}_\infty\quad\quad\mbox{ weakly in }(W^{1,\infty})^\ast(B_\delta(0)\setminus B_{2\al}(0))\quad.
\ee
Denote 
\[
\hat{\la}_k:=\log|\p_{x_1}\hat{\vec{\Phi}}_k|=\log|\p_{x_2}\hat{\vec{\Phi}}_k|\quad.
\]
Since $\hat{\la}_k$ is uniformly bounded in $L^\infty(B_\delta(0)\setminus B_{2\al}(0))$, since $\hat{\vec{\Phi}}_k$ satisfies
\[
\Delta\hat{\vec{\Phi}}_k=\frac{e^{2\,\hat{\la}_k}}{2}\,\vec{H}_{\hat{\vec{\Phi}}_k}
\]
and since the $L^2$ norm of $\vec{H}_{\hat{\vec{\Phi}}_k}$ is uniformly bounded (due to (\ref{V.a.4})) we deduce that
\be
\label{V.a.11}
\hat{\vec{\Phi}}_k\rightharpoonup\hat{\vec{\Phi}}_\infty\quad\quad\mbox{ weakly in }W^{2,2}(B_\delta(0)\setminus B_{2\al}(0))\quad.
\ee
From (\ref{V.a.4}) we have that for any  $2\al<\delta<1$
\[
\int_{B_\delta(0)\setminus B_{2\al}(0)}|\nabla\vec{n}_{\hat{\vec{\Phi}}_\infty}|^2\ dx=0 \quad .
\]
This implies that $\vec{n}_{\hat{\vec{\Phi}}_\infty}$ is constant on $D^2\setminus\{0\}$ equal to a unit simple $m-2$ vector $\vec{n}_0$, i.e. $\vec{n}_0=\vec{v}_1\ \wedge \ldots \wedge \vec{v}_{m-2}$ for some constant vectors $\vec{v}_1, \ldots, \vec{v}_{m-2}$ of $\R^m$. Thus $\hat{\vec{\Phi}}_\infty$ is conformal from  $D^2\setminus\{0\}$ into
a two dimensional plane $P^2_0$ that we identify to $z_i=0$ for $i\ge 3$ and identifies to an holomorphic map $f_\infty$. Denote $\hat{\la}_\infty$ the limit of $\hat{\la}_k$ 
\[
\hat{\la}_\infty=\log|\p_{x_1}\hat{\vec{\Phi}}_\infty|=\log|\p_{x_2}\hat{\vec{\Phi}}_\infty|=\log|f'(x_1+ix_2)| \quad ,
\]
which is harmonic in $D^2\setminus\{0\}$ - this can be deduced from (\ref{V.a.4}), (\ref{V.a.5}) and (\ref{V.a.5.a}) . Since again from Lemma~\ref{lem:EstCFBranch} $\nabla\la_k$ is uniformly bounded in $L^{2,\infty}$, we have 
\be
\label{V.a.12}
\|\nabla\hat{\la}_\infty\|_{L^{2,\infty}}<+\infty\quad.
\ee
Thus there exists $c_0\in {\R}$ such that
\be
\label{V.a.13}
\Delta\hat{\la}_\infty=c_0\ \delta_0 \quad ,
\ee
and $c_0=2\pi \, (\theta_0-1)\in 2\pi\, {\Z}$ where $\theta_0$ is the order of the zero of the pole of $f_\infty$ at $0$.
Consider the 2-sphere $S^2_k$ of radius $e^{-\ov{\la}_k/2}$ tending to infinity and tangent to the 2-plane $P^2_0$ at $0$ and given by $z_i=0$ for $i\ge 3$ at 0 and contained in the half three space $E^3_+$ given
by $z_i=0$ for $i\ge 4$ and $z_3\ge 0$.

\medskip

Let $\pi_k$ be the stereographic projection from $S^2_k$ into the two plane $P^2_0$ such that $\pi_k(0)=(0\cdots 0)$ and $\pi_k(0,0, 2\,e^{-\ov{\la}_k/2},0\cdots 0)=\infty$. 
It is clear that for any $R>0$ and 
\[
\forall\, l\in {\N}\quad\quad\quad\|\pi_k^{-1}(z)-z\|_{C^l(B^2_R(0))}\le \, C_{R,l}\ e^{\ov{\la}_k/2}
\]
where $B^2_R(0):=B_R^m(0)\cap P^2_0$. Hence, for any choice $2\al<\delta<1$,  we have 
\be
\label{V.a.14}
\lim_{k\rightarrow +\infty}\|\pi_k^{-1}\circ\hat{\vec{\Phi}}_\infty-\hat{\vec{\Phi}}_\infty \|_{C^l(B_\delta(0)\setminus B_{2\al}(0))}\rightarrow 0 \quad.
\ee
The advantage of considering $\pi_k^{-1}\circ\hat{\vec{\Phi}}_\infty$ instead of $\hat{\vec{\Phi}}_\infty $ is that $\pi_k^{-1}\circ\hat{\vec{\Phi}}_\infty(D^2)$ is compact
even if $\theta_0<0$ and since $\pi_k^{-1}$ is conformal $\pi_k^{-1}\circ\hat{\vec{\Phi}}_\infty$ is also conformal.
Using (\ref{V.a.11}) we have for any choice $2\al<\delta<1$
\be
\label{V.a.15}
\hat{\vec{\Phi}}_k-\pi_k^{-1}\circ\hat{\vec{\Phi}}_\infty\rightharpoonup 0\quad\quad\mbox{ weakly in }(W^{1,\infty})^\ast\cap W^{2,2}(B_\delta(0)\setminus B_{2\al}(0))\quad.
\ee
Using Fubini's Theorem together with the mean value formula, we can   find a ``good radius'' $r_k\in (1/2,1)$ such that
\be
\label{V.a.16}
\limsup_{k\rightarrow+\infty}\int_{\p B_{r_k}}|\nabla^2 \hat{\vec{\Phi}}_k|^2\ dl<+\infty \quad,
\ee
where $dl$ is the length form on $\p B_{r_k}$. Because of (\ref{V.a.15}), we have that
\be
\label{V.a.17}
\hat{\vec{\Phi}}_k-\pi_k^{-1}\circ\hat{\vec{\Phi}}_\infty\rightharpoonup 0\quad\quad\mbox{ weakly in }H^{3/2}({\p B_{r_k}})\quad
\ee
and 
\be
\label{V.a.18}
\p_r\hat{\vec{\Phi}}_k-\p_r(\pi_k^{-1}\circ\hat{\vec{\Phi}}_\infty)\rightharpoonup 0\quad\quad\mbox{ weakly in }H^{1/2}({\p B_{r_k}})\quad.
\ee
Combining (\ref{V.a.16})...(\ref{V.a.18}) we deduce
\be
\label{V.a.19}
\hat{\vec{\Phi}}_k-\pi_k^{-1}\circ\hat{\vec{\Phi}}_\infty\rightharpoonup 0\quad\quad\mbox{ weakly in }W^{2,2}({\p B_{r_k}})\quad
\ee
and 
\be
\label{V.a.20}
\p_r\hat{\vec{\Phi}}_k-\p_r(\pi_k^{-1}\circ\hat{\vec{\Phi}}_\infty)\rightharpoonup 0\quad\quad\mbox{ weakly in }W^{1,2}({\p B_{r_k}})\quad.
\ee
Consider the map solving
\be
\label{V.a.21}
\lf\{
\begin{array}{l}
\ds\Delta^2\ti{\vec{\Phi}}_k=0\quad\quad\mbox{ in }B_{r_k}\setminus B_{r_k/2}\quad\\[5mm]
\ds\ti{\vec{\Phi}}_k=\hat{\vec{\Phi}}_k\quad\quad\mbox{ in }\p B_{r_k}\\[5mm]
\ds\ti{\vec{\Phi}}_k=\pi_k^{-1}\circ\hat{\vec{\Phi}}_\infty\quad\quad\mbox{ in }\p B_{r_k/2}\\[5mm]
\ds\p_r\ti{\vec{\Phi}}_k=\p_r\hat{\vec{\Phi}}_k\quad\quad\mbox{ in }\p B_{r_k}\\[5mm]
\ds\p_r\ti{\vec{\Phi}}_k=\p_r(\pi_k^{-1}\circ\hat{\vec{\Phi}}_\infty)\quad\quad\mbox{ in }\p B_{r_k/2} \quad.
\end{array}
\rg.
\ee
Since $\hat{\vec{\Phi}}_\infty$ is holomorphic, and hence biharmonic,  combining (\ref{V.a.14}), (\ref{V.a.19})...(\ref{V.a.21}) together with classical
elliptic estimates we obtain
\be
\label{V.a.22}
\ti{\vec{\Phi}}_k-\hat{\vec{\Phi}}_\infty\rightharpoonup 0\quad\quad\mbox{ weakly in }W^{5/2,2}(B_{r_k}\setminus B_{r_k/2}) \quad,
\ee
which implies, by Sobolev embeddings,
\be
\label{V.a.23}
\lim_{k\rightarrow 0}\|\ti{\vec{\Phi}}_k-\hat{\vec{\Phi}}_\infty\|_{C^{1,\al}(B_{r_k}\setminus B_{r_k/2})}=0\quad,
\ee
for any $\al<1/2$.

\medskip

We extend $\ti{\vec{\Phi}}_k$ by $\pi_k^{-1}\circ\hat{\vec{\Phi}}_\infty$ in $B_{r_k/2}$ and by $\hat{\Phi}_k$ in the complement of $B_{r_k}$. Finally we denote
\[
\vec{\zeta}_k:=e^{\ov{\la}_k}\ \ti{\vec{\Phi}}_k(x/s_k)+\vec{\Phi}_k(0,1/2)
\]
in such a way that
\begin{itemize}
\item[i)] 
\be
\label{V.b.23}
\vec{\zeta}_k\equiv\vec{\Phi}_k\quad\quad\mbox{ in } {\C}\setminus B_{s_k} \quad,
\ee
\item[ii)]
\be
\label{V.c.23}
\lim_{k\rightarrow 0}diam(\vec{\zeta}_k(B_{s_k}(0)))=0 \quad,
\ee
\item[iii)]
\be
\label{V.d.23}
\lim_{k\rightarrow +\infty}\int_{B_{{s_k}}(0)}\lf[1+|{\mathbb I}^0_{\vec{\zeta}_k}|^2\rg]\ dvol_{g_{\vec{\zeta}_k}}=0\quad,
\ee
\item[iv)]
and  finally  introducing
\be
\label{V.a.24}
\sigma_{k}:=\frac{g^{11}_{\vec{\zeta}_k}-g^{22}_{\vec{\zeta}_k}-2\,i\, g^{12}_{\vec{\zeta}_k}}{g^{11}_{\vec{\zeta}_k}+g^{22}_{\vec{\zeta}_k}+\sqrt{ det\, g_{\vec{\zeta}_k}}}
\ee
where $g^{ij}_{\vec{\zeta}_k}:=\p_{x_i}\vec{\zeta}_k \cdot\p_{x_j}\vec{\zeta}_k$, using  (\ref{V.a.22})  and (\ref{V.a.23}), we have that
\be
\label{V.a.25}
supp(\sigma_{k})\subset B_{s_k\,r_k}\setminus B_{s_k\,r_k/2}\quad\quad\mbox{ and }\quad\quad\lim_{k\rightarrow +\infty}\|\sigma_k\|_{L^\infty({\C})}+\|\nabla \sigma_k\|_{L^2({\C})}=0\quad.
\ee
\end{itemize}
Let $\psi_k(z):=\phi_k(z)+z$ where $\phi_k$ is the fixed point in $\dot{H}^1({\C})$ given by
\[
\lf\{
\begin{array}{l}
\ds\phi_k(z)+\frac{1}{\pi z}\ast(\sigma_k\ \p_z\phi_k)=-\frac{1}{\pi z}\ast\sigma_k\\[5mm]
\ds\quad\phi_k(0)=0\quad.
\end{array}
\rg.
\]
Notice that $\phi_k$ satisfies then
\be
\label{V.a.26}
\p_{\ov{z}}\phi_k=\sigma_k\ \p_z \phi_k+\sigma_k\quad.
\ee
and
\be
\label{V.a.26a}
\|\nabla \phi_k\|_{L^2({\C})}\le C\ \|\sigma_k\|_{L^2({\C})}\rightarrow 0 \quad.
\ee
It is a classical fact from quasi-conformal mapping theory and from the classical analysis of Beltrami equation that $\psi_k$ realizes an H\"older homeomorphism from
${\C}\cup\{\infty\}$ into ${\C}\cup\{\infty\}$ (see for instance Section 4.2 of \cite{IT}), moreover there exists $p>2$ such that
\be
\label{V.a.27}
\|\nabla \phi_k\|_{L^p({\C})}\le C\ \|\sigma_k\|_{L^p({\C})}\rightarrow 0 \quad.
\ee
This implies in particular that
\be
\label{V.a.28}
\|\psi_k(x)-x\|_{C^{0,\al}}\rightarrow 0
\ee
for some $0<\al<1$ and thus
\be
\label{V.a.29}
\psi_k\lf(B_{s_k\,r_k}(0)\setminus B_{s_k\,r_k/2}(0)\rg)\subset B_{2 (s_k\,r_k)^\al}(0)
\ee
for $k$ large enough.
A classical computation gives
\be
\label{V.a.30}
\sigma_{\vec{\zeta}_k\circ\psi_k^{-1}}=\frac{g^{11}_{\vec{\zeta}_k\circ\psi_k^{-1}}-g^{22}_{\vec{\zeta}_k\circ\psi_k^{-1}}-2\,i\, g^{12}_{\vec{\zeta}_k\circ\psi_k^{-1}}}{g^{11}_{\vec{\zeta}_k\circ\psi_k^{-1}}+g^{22}_{\vec{\zeta}_k\circ\psi_k^{-1}}+\sqrt{ det\, g_{\vec{\zeta}_k\circ\psi_k^{-1}}}}=0
\ee
where $g^{ij}_{\vec{\zeta}_k\circ\psi_k^{-1}}:=\p_{x_i}(\vec{\zeta}_k\circ\psi_k^{-1}) \cdot\p_{x_j}(\vec{\zeta}_k\circ\psi_k^{-1})$.
Again classical computations (see Section 4.2 of \cite{IT}) gives that $\psi_k^{-1}$ is the \underbar{normal solution} to
 \[
 \p_{\ov{w}}\psi_k^{-1}=-\sigma_k\circ\psi_k^{-1}\ \p_{w}\ \psi_k^{-1}
 \]
where $w=y_1+iy_2\in{\C}$ (i.e. $\psi_k$ is the unique continuous function satisfying the above identity and such that $\psi_k(0)=0$ and such that $\p_{w}\psi_k-1$ belongs to $L^p(\C)$, for some $p>2$; for more details see \cite[Theorem 4.24]{IT}). Moreover there exists $\varphi_k$ such that $\psi_k^{-1}(y)=y+\varphi_k(y)$ and, because of (\ref{V.a.29}), we have that
\[
Supp(-\sigma_k\circ\psi_k^{-1})\subset B_{2 (s_k\,r_k)^\al}(0)\quad\quad\mbox{ and }\quad\quad\lim_{k\rightarrow +\infty}\|-\sigma_k\circ\psi_k^{-1}\|_{L^\infty({\C})}=0\quad.
\]
Thus, as before, we have the existence of $p>2$ such that
\be
\label{V.a.31}
\|\nabla \varphi_k\|_{L^2({\C})}+\|\nabla \varphi_k\|_{L^p({\C})}\rightarrow 0 \quad.
\ee 
We now go back to the sphere $\Sp^2$ and we set
\be
\label{V.a.31.b}
\lf\{
\begin{array}{l}
\ds\vec{\xi}_k:=\vec{\zeta}_k\circ\psi_k^{-1}\circ\pi\\[5mm]
\ds \Psi_k:=\pi^{-1}\circ\psi_k\circ\pi \quad,
\end{array}
\rg.
\ee
where we recall that $\pi$ is the stereographic projection from $\Sp^2$ into ${\C}$ that sends $a\in \Sp^2$ into $0$. Because of (\ref{V.a.28}), $\Psi_k$ converges uniformly to the identity
in any compact of $\Sp^2\setminus\{-a\}$. Moreover, still because of (\ref{V.a.28}), for any $\ep>0$ there exists $k_0$ and $\delta$ such that for all $k>k_0$ and $r<\delta$
$\|\Psi_k(x) +a\|_{L^\infty(B_r(-a))}<\ep$. Thus $\Psi_k$ converges uniformly to the identity on $\Sp^2$. Finally combining this fact with (\ref{V.b.23}), (\ref{V.c.23}) and (\ref{V.d.23})
we have proved the ``cutting and filling Lemma''~\ref{lm-V.2}.\hfill $\Box$

\medskip

\noindent{\bf Proof of Lemma~\ref{lm-V.1}.} Let $\{\vec{\Phi}_k\}_{k \in \N}$ be a sequence of conformal, possibly branched, weak immersions in ${\mathcal F}_{\Sp^2}$ satisfying the assumption of the lemma. Denote by $\pi$ the stereographic projection that sends $a$ to zero and which is almost an isometry from small geodesic balls centered at $a$ into the corresponding euclidian ball
centered at $0$. We have for instance that $\pi(B_{r^i_k}(a^i_k))$ (resp.  $\pi(B_{\ep_k^{-1}r^i_k}(a^i_k))$ is ``almost'' the  ball of center $x_k^i:=\pi(a_k^i)$ and radius
$r_k^i$ (resp. radius $\ep_k^{-1} r_k^i$). In oder to simplify the presentation we will identify $\pi(B_{r^i_k}(a^i_k))$ with $B^2_{r_k^i}(x_k^i)$, $\pi(B_{\ep_k^{-1}r^i_k}(a^i_k))$
with $B^2_{\ep_k^{-1}r_k^i}(x_k^i)$, $\pi(B_{\delta_k}(a))$ with $B^2_{\delta_k}(0)$ and $\pi(B_{\ep_k\, \delta_k}(a))$ with $B^2_{\ep_k\,\delta_k}(0)$ in the list of assumptions 
going from (\ref{V.a.b.1}) until (\ref{V.a.e.1}). 

Let $D_k(x):=x/\ep_k\delta_k$ be the dilation in ${\C}$ of factor $(\ep_k\delta_k)^{-1}$. We consider the new conformal immersion $\ti{\vec{\Phi}}_k$ given by
\[
\ti{\vec{\Phi}}_k:=\vec{\Phi}_k\circ\pi^{-1}\circ D_k\circ \pi\quad.
\]
We can apply Lemma~\ref{lm-V.2} $N+1$ times respectively in the ball $\pi^{-1}({\C}\setminus B_{1/\sqrt{\ep_k}}(0):=B_{s_k}(-a)$ and in the balls 
$\pi^{-1}(B_{r^i_k/(\ep_k^2\delta_k)}(x_k^i/(\ep_k\delta_k))):=B_{r^i_k/(\ep_k^2\delta_k)}(c_k^i)$ where
 $$
\rho_i^k:=\frac{r^i_k}{\ep_k^2\delta_k}\rightarrow 0 \quad\quad\mbox{ and }\quad\quad c_k^i:=\pi^{-1}(x_k^i/(\ep_k\delta_k))\rightarrow a \quad .
 $$
We then generate a sequence $\{\Psi_k\}_{k\in \N}$  of bilipschitz quasi conformal homeomorphisms of $\Sp^2$ converging uniformly to the identity (with a distortion $\mu_{\Psi_k}$ converging
uniformly to zero) and a sequence $\vec{\xi}_k$ of conformal, possibly branched, weak immersions in ${\mathcal F}_{\Sp^2}$ satisfying
\be
\label{V.a.32}
\vec{\xi}_k\circ\Psi_k=\ti{\vec{\Phi}}_k\quad\quad\mbox{ in }\Sp^2\setminus B_{s_k}(-a)\bigcup_{i=1}^NB_{\rho_k^i}(c_k^i) \quad .
\ee
Moreover
\be
\label{V.a.33}
\lim_{k\rightarrow +\infty}diam\lf(\vec{\xi}_k\circ\Psi_k(B_{s_k}(-a))\rg)+\sum_{i=1}^Ndiam\lf(\vec{\xi}_k\circ\Psi_k(B_{\rho_k^i}(c_k^i))\rg)= 0 \quad,
\ee
whereas, from the assumption (\ref{V.a.h.1}) one has
\be
\label{V.a.34}
\liminf_{k\rightarrow+\infty} diam\lf(\vec{\xi}_k\circ\Psi_k\lf(\Sp^2\setminus \lf(B_{s_k}(-a)\bigcup_{i=1}^NB_{\rho_k^i}(c_k^i)\rg)\rg)\rg)>0 \quad.
\ee
We apply  the ``good-gauge extraction Lemma''~\ref{lm-IV.1} to $\vec{\xi}_k$ and we obtain $u_k\in {\mathcal M}^+(\Sp^2)$, an element $\vec{\zeta}_\infty\in{\mathcal F}_{\Sp^2}$
and $Q$ points $b_1,\cdots, b_Q$ such that
\be
\label{V.a.35}
\vec{\zeta}_k:=\vec{\xi}_k\circ u_k\rightharpoonup \vec{\zeta}_\infty \quad\quad\mbox{weakly in }W^{2,2}(\Sp^2\setminus\{b_1,\cdots, b_Q\})\quad.
\ee
Because of (\ref{V.a.33}) there exists $p_0,p_1\cdots p_N\in M^m$ such that
\[
\vec{\xi}_k\circ\Psi_k(B_{s_k}(-a))\rightarrow p_0\quad\quad\mbox{ and }\quad\forall i\in\{1\cdots N\}\quad\vec{\xi}_k\circ\Psi_k(B_{\rho_k^i}(c_k^i))\rightarrow p_i\quad.
\]
Since the Willmore energy of $\vec{\zeta}_\infty$ is finite, each point admits a controlled number of preimages. Denote by
\[
\{x_1\cdots x_L\}=\bigcup_{i=0}^N\vec{\zeta}_\infty^{-1}(\{p_i\})\quad.
\]
For any $\ep>0$ and for $k$ large enough 
$$
\zeta_k\lf(\Sp^2\setminus \bigcup_{l=1}^L B_\ep(x_l)\rg)\cap \lf(\vec{\xi}_k\circ\Psi_k(B_{s_k}(-a))\bigcup_{i=1}^N\vec{\xi}_k\circ\Psi_k(B_{\rho_k^i}(c_k^i))\rg)=\emptyset \quad.
$$
Thus for any $\ep$ and $k$ large enough
\be
\label{V.a.36}
\forall\, x\in \Sp^2\setminus \bigcup_{l=1}^L B_\ep(x_l)\quad\quad\vec{\zeta}_k(x)=\ti{\vec{\Phi}}_k\circ\Psi_k^{-1}\circ u_k(x)\quad.
\ee
Consider the following sequence of quasi-conformal homeomorphism of the sphere
\[
\La_k:=\Psi_k^{-1}\circ u_k\quad,
\]
the identity \eqref{V.a.36} implies that $\La_k$ is conformal on $\Sp^2\setminus \bigcup_{l=1}^L B_\ep(x_l)$ for $k$ large enough. Let $\pi$ be a fixed stereographic projection that sends 
none of the points $x_l$ to infinity : for $\ep$ small enough $\pi( \bigcup_{l=1}^L B_\ep(x_l))$ is sent into a fixed ball $B_R(0)$ of the complex plane. Let
\[
\mu_{\La_k\circ\pi^{-1}}:=\frac{\p_{\ov{z}}(\La_k\circ\pi^{-1})}{\p_{{z}}(\La_k\circ\pi^{-1})} \quad.
\]
We have then from the previous consideration and the proof of Lemma~\ref{lm-V.2}
\be
\label{V.a.38}
supp(\mu_{\La_k\circ\pi^{-1}})\subset B_R(0)\quad\quad\mbox{ and }\quad\quad\|\mu_{\La_k\circ\pi^{-1}}\|_{L^\infty({\C})}\longrightarrow 0 \quad.
\ee
Let ${\psi}_k$ be the normal solution (see again \cite{IT} chapter 4) of
\[
\lf\{
\begin{array}{l}
\ds\p_{\ov{z}}\psi_k=\mu_{\La_k\circ\pi^{-1}}\ \p_{z}\psi_k\quad\quad\mbox{ on }{\C}\\[5mm]
\ds \psi_k(0)=0
\end{array}
\rg.
\]
such that there exists $p>2$ for which $\nabla(\psi_k(x)-x)\in L^p({\C})$. It is a classical result, since (\ref{V.a.38}) holds, that $\{\psi_k\}_{k\in \N}$ and $\{\psi_k^{-1}\}_{k\in \N}$ are compact in $C^{0,\al}$
for some $\al<1$ (see \cite{IT} 4.2.3), and converge to the identity. Moreover, a classical computation (see \cite[Proposition 4.13]{IT}) gives that $\La_k\circ\pi^{-1}\circ\psi_k^{-1}\circ\pi$ is conformal. 

\medskip

Consider now $\zeta_k\circ\pi^{-1}\circ\psi_k^{-1}\circ\pi$. This sequence of maps converge in $C^{0,\al}$ norm to $\vec{\zeta}_\infty$ and in particular we have
\be
\label{V.a.39}
\ti{\vec{\Phi}}_k\circ\La_k\circ \pi^{-1}\circ\psi_k^{-1}\circ\pi\longrightarrow \vec{\zeta}_\infty \quad\quad\mbox{ in }C^{0,\al}\lf(\Sp^2\setminus \bigcup_{l=1}^L B_\ep(x_l)\rg)\quad.
\ee
We apply  now Proposition~\ref{Prop:ConcComp} to $\ti{\vec{\Phi}}_k\circ\La_k\circ \pi^{-1}\circ\psi_k^{-1}\circ\pi$ which is conformal on $\Sp^2$. Assume that, modulo extraction of a subsequence, $\ti{\vec{\Phi}}_k\circ\La_k\circ \pi^{-1}\circ\psi_k^{-1}\circ\pi $ would converge
uniformly to a point on any compact $K\subset \Sp^2\setminus\{a_1\cdots a_P\}$ for some finite collection of points $\{a_1\cdots a_P\}$; this would contradict (\ref{V.a.39}).
Thus we can take
\[
f_k:=\pi^{-1}\circ D_k\circ\pi\circ\La_k\circ \pi^{-1}\circ\psi_k^{-1}\circ\pi\quad,
\]
and one easily checks that the required conditions (\ref{V.a.f.1})...(\ref{V.a.k.2}) are fullfield for this choice of $f_k$ and Lemma~\ref{lm-V.1} is proved.\hfill $\Box$

\section{Domain decomposition and the proof of Theorem~\ref{th-I.1}.}
\reset

Before  moving to the proof of Theorem~\ref{th-I.1} we will first establish the following lemma.

\begin{Lm} {\bf[Domain decomposition lemma]}
\label{lm-VI.1}
Let $\{\vec{\Phi}_k\}_{k \in \N}{\subset \mathcal F}_\Sp^2$ be a sequence of conformal weak, possibly branched, immersion  into $M^m$. Assume that
\be
\label{VI.1}
\limsup_{k\rightarrow +\infty}\int_{\Sp^2}\lf[1+|D\vec{n}_{\vec{\Phi}_k}|_h^2\rg]\ dvol_{g_k}<+\infty\quad,
\ee
where $dvol_{g_k}$ denotes the volume form associated to the induced metric $g_k:=\vec{\Phi}_k^\ast h$ by $\vec{\Phi}_k$ on 
$\Sp^2$ and $\vec{H}_{\vec{\Phi}_k}$ is the mean curvature vector associated to the immersion $\vec{\Phi}_k$. Then, modulo extraction of a subsequence, there
exists $N\in {\N}$, there exist $N$ sequences
of M\"obius transformations in ${\mathcal M}^+(\Sp^2)$, $(f_k^i)_{i=1\cdots N}$, $N$ elements of ${\mathcal F}_{\Sp^2}$, $(\vec{\xi}^i_\infty)_{i=1\cdots N}$, and $N$ natural integers 
$(N_i)_{i=1\cdots N}$ such that
\be
\label{VI.2}
\vec{\Phi}_k\circ f^i_k\rightharpoonup \vec{\xi}^i_\infty\quad\quad\mbox{ weakly in }W^{2,2}(\Sp^2\setminus\{b^{i,1}\cdots b^{i,N^i}\})
\ee
where, for each $i\in\{1\cdots N\}$, $(b^{i,j})_{j=1\cdots N^i}$ is a finite family of points in $\Sp^2$. There exists a sequence $s_k\rightarrow 0$ such that for any $i=1\cdots N$
 \be
 \label{VI.3}
 \vec{\Phi}_k\circ f^i_k\longrightarrow \vec{\xi}^i_\infty\quad\quad\mbox{ uniformly in }C^0\lf(\Sp^2\setminus\bigcup_{j=1}^{N^i} B_{s_k}(b^{i,j})\rg)\quad,
 \ee
 and
 \be
 \label{VI.3a}
 \int_{\Sp^2\setminus \bigcup_{j=1}^{N^i} B_{s_k}(b^{i,j})}1\ dvol_{g_{\vec{\Phi}_k\circ f^i_k}}\longrightarrow\int_{\Sp^2}1\ dvol_{g_{\vec{\xi}^i_\infty}}=Area(\vec{\xi}^i_\infty(\Sp^2))\quad.
 \ee
 Denote
 \be
 \label{VI.4}
 S^i_k:=\Sp^2\setminus\bigcup_{j=1}^{N^i} B_{s_k}(b^{i,j})\quad .
 \ee
For any $i'\ne j$ there exists $i\in \{1\cdots N^i\}$ such that
 \be
 \label{VI.5}
 \lf(f^i_k\rg)^{-1}\circ f^{i'}_k\lf(S^{i'}_k\rg)\subset B_{s_k}(b^{i,j})\quad.
 \ee
 For each $i\in\{1\,cdots, N\}$ and for each $j\in\{1,\cdots, N^i\}$ we denote by $J^{i,j}$ the set of indices $i'$ such that (\ref{VI.5}) holds. We have
 finally
 \be
 \label{VI.6}
 \begin{array}{l}
\ds\forall\, i\in\{1,\cdots, N\}\quad,\quad\forall j\in\{1,\cdots, N^i\}\\[5mm]
 \ds\quad\quad\lim_{k\rightarrow \infty}diam\lf(\vec{\Phi}_k\circ f^i_k\lf(B_{s_k}(b^{i,j})\setminus\bigcup_{i'\in J^{i,j}}Conv\lf( \lf(f^i_k\rg)^{-1}\circ f^{i'}_k\lf(S^{i'}_k\rg)\rg)\rg)\rg)=0\quad,
 \end{array}
 \ee
where $Conv(X)$ denotes the convex hull of a set $X$, and
 \be
 \label{VI.6b}
 \begin{array}{l}
\ds\forall\, i\in\{1\cdots N\}\quad,\quad\forall j\in\{1\cdots N^i\}\\[5mm]
 \ds\quad\quad\lim_{k\rightarrow \infty}Area\lf(\vec{\Phi}_k\circ f^i_k\lf(B_{s_k}(b^{i,j})\setminus\bigcup_{i'\in J^{i,j}}Conv\lf( \lf(f^i_k\rg)^{-1}\circ f^{i'}_k\lf(S^{i'}_k\rg)\rg)\rg)\rg)=0\quad.
 \end{array}
 \ee
\hfill $\Box$
\end{Lm}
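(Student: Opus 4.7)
The plan is to construct the families $(f_k^i)_{i=1}^N$ iteratively by a bubble-extraction procedure, combining the good-gauge extraction Lemma~\ref{lm-IV.1}, the diameter-tracking Lemma~\ref{lm-V.1}, and the concentration-compactness Proposition~\ref{Prop:ConcComp}; the decomposition properties (\ref{VI.5})--(\ref{VI.6b}) are then read off the termination of this procedure.

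First bubble. If $\liminf_k \diam(\vec{\Phi}_k(\Sp^2))=0$ the sequence collapses and we take $N=0$; otherwise Lemma~\ref{lm-IV.1} furnishes $f_k^1\in \mathcal{M}^+(\Sp^2)$ and points $\{b^{1,1},\ldots,b^{1,N^1}\}\subset \Sp^2$ such that on every compact $K\subset\subset \Sp^2\setminus\{b^{1,j}\}$ the conformal factor $\ti\la_k^1=\log|d(\vec{\Phi}_k\circ f_k^1)|$ is uniformly bounded in $L^\infty(K)$. Combining this bound with the $L^2$ bound on the second fundamental form, standard elliptic estimates for the conformal Willmore-type equation yield the weak $W^{2,2}$ convergence (\ref{VI.2}) to a non-trivial $\vec{\xi}_\infty^1\in\mathcal{F}_{\Sp^2}$ and, by Sobolev embedding, the uniform $C^0$ convergence (\ref{VI.3}) on $\Sp^2\setminus\bigcup_j B_{s_k}(b^{1,j})$ for a suitable $s_k\downarrow 0$; the area convergence (\ref{VI.3a}) follows because the area carried by the collars $B_{s_k}(b^{1,j})$ is controlled by the cutting-and-filling construction of Lemma~\ref{lm-V.2}.

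Iteration. At each concentration point $b^{i,j}$ of an already-extracted bubble, a Fubini/pigeonhole argument on a dyadic annular decomposition, using the bounded total curvature and area together with the uniform bound on the number of branch points from Lemma~\ref{lem:EstCFBranch}, produces radii $\delta_k,\ep_k\downarrow 0$ satisfying hypotheses (\ref{V.a.d.1})--(\ref{V.a.e.1}) of Lemma~\ref{lm-V.1} on the outer neck and on the inner necks of sub-bubbles previously extracted inside $B_{\delta_k}(b^{i,j})$. If the residual region $B_{\ep_k\delta_k}(b^{i,j})\setminus \bigcup_\ell B_{\ep_k^{-1}r_k^\ell}(a_k^\ell)$ still has positive limit diameter, hypothesis (\ref{V.a.h.1}) is met and Lemma~\ref{lm-V.1} delivers a new M\"obius sequence and a non-trivial $\vec{\xi}_\infty^{i'}$; we record it and restart the analysis at its concentration points. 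The set $J^{i,j}$ is defined along the way as the set of sub-bubbles extracted within $B_{s_k}(b^{i,j})$, and conclusion (\ref{V.a.k.1}) of Lemma~\ref{lm-V.1} is exactly the inclusion (\ref{VI.5}).

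Termination and residual collapse. Each non-trivial extracted $\vec{\xi}_\infty^i$ consumes a definite quantum of energy: as in the proof of Lemma~\ref{lm-IV.1}, two well-separated points in $\vec{\xi}_\infty^i(\Sp^2)$ force a Willmore lower bound of essentially $4\pi$ via the Li--Yau-type inequality (\ref{eq:MFBoundary1}), so at most $(4\pi)^{-1}$ times the global energy bound many bubbles can be extracted. The procedure therefore stops after finitely many steps, defining $N$ and the data $(f_k^i,\vec{\xi}_\infty^i,b^{i,j})_{i,j}$. Maximality forces (\ref{VI.6}): a surviving residual set inside $B_{s_k}(b^{i,j})\setminus \bigcup_{i'\in J^{i,j}} \mathrm{Conv}\lf((f_k^i)^{-1}\circ f_k^{i'}(S_k^{i'})\rg)$ with positive limit diameter would trigger one more application of Lemma~\ref{lm-V.1}, contradicting maximality; the area version (\ref{VI.6b}) follows from (\ref{VI.6}) together with Lemma~\ref{lm-V.2}, which shows that neck regions of vanishing diameter carry vanishing area. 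The hard part will be the orchestration of the neck scales: one must simultaneously choose $s_k$, $\delta_k$ and $\ep_k$ at every node of the recursion so that conditions (\ref{V.a.b.1})--(\ref{V.a.c.1}) of Lemma~\ref{lm-V.1} hold for \emph{all} sub-bubble families inside a given region, which amounts to a careful diagonal extraction over the (finite, by the energy quantization) tree of bubbles.
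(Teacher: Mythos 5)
Your construction of the bubbles follows the paper's own route: good-gauge extraction (Lemma~\ref{lm-IV.1}) for the first bubble, a pigeonhole argument on dyadic annuli to select neck scales with vanishing energy, the diameter-tracking Lemma~\ref{lm-V.1} at each concentration point where the image diameter does not collapse, and termination because each extracted bubble costs a fixed positive quantum of $G$-energy; up to and including (\ref{VI.6}) this is essentially the argument in the paper. The genuine gap is in your treatment of the area statement (\ref{VI.6b}). You assert that it ``follows from (\ref{VI.6}) together with Lemma~\ref{lm-V.2}, which shows that neck regions of vanishing diameter carry vanishing area.'' Lemma~\ref{lm-V.2} shows no such thing: its area conclusion concerns the filled-in cap $\vec{\xi}_k\circ\Psi_k(B_{s_k}(a))$ of the \emph{modified} immersion, not the original immersion $\vec{\Phi}_k\circ f^i_k$ on the neck $B_{s_k}(b^{i,j})\setminus\bigcup_{i'\in J^{i,j}}Conv\lf(\lf(f^i_k\rg)^{-1}\circ f^{i'}_k\lf(S^{i'}_k\rg)\rg)$. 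Moreover, vanishing image diameter together with only an $L^2$ bound on the second fundamental form does not by itself force vanishing area; the correct quantitative statement is of the form $Area\le C\, d^2$ with $C$ depending on the Willmore-type energy, and this is a monotonicity-formula estimate, not a consequence of small diameter alone.

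To close this gap the paper proceeds as follows: it reparametrizes the neck by a M\"obius map $\varphi_k$ so that it becomes the complement in $\Sp^2$ of finitely many shrinking balls, applies the cutting-and-filling Lemma~\ref{lm-V.2} in each of these balls so as to produce a \emph{closed} weak immersion $\vec{\zeta}^i_k$ of $\Sp^2$ whose image contains the neck image, has diameter $d^i_k\rightarrow 0$ and uniformly bounded energy $G(\vec{\zeta}^i_k)$, and then, after a Nash embedding and using (\ref{xy1}), invokes L.~Simon's monotonicity formula \cite{Si} to deduce $Area\lf(\vec{\zeta}^i_k(\Sp^2)\rg)=O\lf((d^i_k)^2\rg)\rightarrow 0$, which dominates the area of the neck and yields (\ref{VI.6b}). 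This use of the monotonicity formula on a closed comparison surface is the missing ingredient in your argument. As a minor further point, (\ref{VI.3a}) is obtained in the paper from the strong local convergence of the conformal factors on compacts away from the concentration points together with a diagonal choice of $s_k\rightarrow 0$, not from the cutting-and-filling construction as you suggest.
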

\noindent{\bf Proof of Lemma~\ref{lm-VI.1}.} We construct the $f^i_k$'s, the $\vec{\xi}_i^\infty$'s and the $b^{i,j}$'s by induction. We first apply the ``good gauge extraction'' Lemma~\ref{lm-IV.1}
and we generate a first sequence $f_k^1$ of elements of ${\mathcal M}^+(\Sp^2)$ and a first element $\vec{\xi}_\infty^1\in {\mathcal F}_{\Sp^2}$ as well as a first collection of points
$\{b^{1,1}\cdots b^{1,N^1}\}$ such that
\[
\vec{\xi}_k^1:=\vec{\Phi}_k\circ f^1_k\rightharpoonup \vec{\xi}^1_\infty\quad\quad\mbox{ weakly in }W^{2,2}(\Sp^2\setminus\{b^{1,1}\cdots b^{1,N^1}\})\quad.
\]
Since the weak convergence in $ W^{2,2}(\Sp^2\setminus\{b^{1,1}\cdots b^{1,N^1}\})$ 
implies a strong $C^0$ convergence of $\{\vec{\xi}_k^1\}_{k\in \N}$ on  any compact  of $S^1_\infty:=\Sp^2\setminus\{b^{1,1}\cdots b^{1,N^1}\}$
but also a strong $L^2$ convergence of the conformal factors $\{e^{\la_k^1}\}_{k\in \N}$  satisfying  $g_{\vec{\xi}_k^1}= e^{2\,\la_k^1}\ g_{\Sp^2}$, and moreover since the limit $\vec{\xi}_\infty^1\in {\mathcal F}_{\Sp^2}$ is lipschitz all over $\Sp^2$,
one can always find $t_k\rightarrow 0$ such that
\be
 \label{VI.7}
 \vec{\xi}_k^1:=\vec{\Phi}_k\circ f^1_k\longrightarrow \vec{\xi}^1_\infty\quad\quad\mbox{ uniformly in }C^0\lf(\Sp^2\setminus\bigcup_{j=1}^{N^1} B_{t_k}(b^{1,j})\rg)\quad,
 \ee
 and such that
 \be
 \label{VI.7ab}
 \int_{\Sp^2\setminus \bigcup_{j=1}^{N^i} B_{t_k}(b^{i,j})}1\ dvol_{g_{\vec{\Phi}_k\circ f^1_k}}\longrightarrow\int_{\Sp^2}1\ dvol_{g_{\vec{\xi}^1_\infty}}=Area(\vec{\xi}^1_\infty(\Sp^2))\quad .
 \ee
Consider now for any $j\in\{1\cdots N^1\}$ and for $l\in{\N}$ the annuli
$$
A^{j,l}_k:= B_{t_k^{\frac{1}{l+1}}}(b^{1,j})\setminus B_{t_k^{\frac{1}{l}}}(b^{1,j}) \quad.
$$
Because of (\ref{VI.1}), one can find $l_k\in {\N}$ such that $t^{\frac{1}{l_k+1}}\rightarrow 0$ and 
\[
\ep_k^1:={t_k^{\frac{1}{l_k}-\frac{1}{l_k+1}}}\longrightarrow 0\quad\quad\mbox{ and }\quad\forall\, j\quad\lim_{k\rightarrow +\infty}\int_{A^{j,l_k}}\lf[1+|\vec{H}_{\vec{\Phi}_k}|_h^2\rg]\ dvol_{g_k}=0 \quad.
\]
We then choose $\delta_k^1:=t^{\frac{1}{l^j_k+1}}$.

\medskip

In case for all $j=1\cdots N^1$ one has
\be
\label{VI.8}
\lim_{k\rightarrow 0}diam\lf(\vec{\Phi}_k\circ f^1_k\lf(B_{\delta_k^1}(b^{1,j})\rg)\rg)=0 \quad,
\ee
then we stop the procedure at this stage. If not, we take each of the $B_{\delta_k^1}(b^{1,j})$ such that (\ref{VI.8}) does not happen and for each of them we apply the ``diameter tracking procedure''
lemma~\ref{lm-V.1} with $a:=b^{1,j}$, $\delta_k:= \delta_k^1$ and $\ep_k:=\ep_k^1$ and no $a_k^i$ first. We generate then new $f^i_k$ and observe that each such generation costs
an amount of  energy which is at least
\[
\inf_{\vec{\xi}\in {\mathcal F}_{\Sp^2}}L(\vec{\xi})=c_0>0 \quad .
\]
Thus the procedure must stop after finitely many iterations and Lemma~\ref{lm-VI.1} will be proved once (\ref{VI.6b}) will be established.

\medskip

\noindent{\it Proof of (\ref{VI.6b}).} There exists a sequence of conformal transformation $\varphi_k$ in ${\mathcal M}^+(\Sp^2)$  such that
\[
\varphi_k^{-1}\lf(B_{s_k}(b^{i,j})\setminus\bigcup_{i'\in J^{i,j}}Conv\lf( \lf(f^i_k\rg)^{-1}\circ f^{i'}_k\lf(S^{i'}_k\rg)\rg)\rg)= \Sp^2\setminus\bigcup_{l=1}^{N^{i,j}+1} B_{s_k^l}(a_k^l)
\]
and there exists $\ep_k\rightarrow 0$ s.t. $\ep_k^{-1}\, s_k^l\rightarrow 0$ for any $l$ and
\[
\lim_{k\rightarrow +\infty}\int_{B_{\ep_k^{-1}\,s_k^l}(a_k^l)\setminus B_{s_k^l}(a_k^l)}\lf[1+|{\mathbb I}_{\vec{\Phi}_k\circ f^i_k\circ\varphi_k}|^2\rg]\ dvol_{g_{\vec{\Phi}_k\circ f^i_k\circ\varphi_k}}=0 \quad.
\]
To each $l$ we apply the  {\it cutting and filling Lemma~\ref{lm-V.2}} for $a=a^l_k$, $s_k=\ep_k^{-1}\,s_k^l$ and $t_k=s_k^l$. We obtain a new conformal immersion
$\vec{\zeta}_k^i$ such that 
\be
\label{n.VI.1}
d_k^i:=diam(\vec{\zeta}_k^i(\Sp^2))\rightarrow 0\quad\quad,\quad\quad\limsup_{k\rightarrow +\infty} G(\vec{\zeta}_k^i)<+\infty
\ee
and such that for all $k$
\be
\label{n.VI.2}
\vec{\Phi}_k\circ f_k^i\lf(B_{s_k}(b^{i,j})\setminus\bigcup_{i'\in J^{i,j}}Conv\lf( \lf(f^i_k\rg)^{-1}\circ f^{i'}_k\lf(S^{i'}_k\rg)\rg)\rg)\subset\vec{\zeta}_k^i(\Sp^2)\quad.
\ee
By  viewing $M^m$ as being isometrically embedded in a euclidian space ${\R}^n$, (\ref{n.VI.1}) together with (\ref{xy1}) imply that there exists
$p_k\in{\R}^n$ such that
\be
\label{n.VI.3}
\limsup_{k\rightarrow +\infty}\int_{\Sp^2}\lf[|D\vec{n}_{\vec{I}\circ\vec{\zeta}_k}|^2\rg]\ dvol_{\vec{I}\circ\vec{\zeta}_k}<+\infty\quad\mbox{ and }\quad\vec{I}\circ\vec{\zeta}_k^i(\Sp^2)\subset B_{d_k}(p_k)\quad.
\ee
By using L. Simon's monotonicity formula \cite{Si}, we deduce that
\be
\label{n.VI.4}
\limsup_{k\rightarrow +\infty} d_k^{-2}\ Area\lf(\vec{\zeta}_k^i(\Sp^2)\rg)=\limsup_{k\rightarrow +\infty} d_k^{-2}\ Area\lf(\vec{\zeta}_k^i(\Sp^2)\cap B_{d_k}(p_k)\rg)<+\infty\quad.
\ee
Thus we infer that
\be
\label{n.VI.5}
Area\lf(\vec{\Phi}_k\circ f_k^i\lf(B_{s_k}(b^{i,j})\setminus\bigcup_{i'\in J^{i,j}}Conv\lf( \lf(f^i_k\rg)^{-1}\circ f^{i'}_k\lf(S^{i'}_k\rg)\rg)\rg)\rg)\le Area\lf(\vec{\zeta}_k^i(\Sp^2)\rg)=O(d_k^2)\rightarrow 0\quad.
\ee
This implies  (\ref{VI.6b})  and  Lemma~\ref{lm-VI.1} is finally proved.\hfill $\Box$

\medskip

\noindent{\bf Proof of Theorem~\ref{th-I.2}.} We construct $\Psi_k$ step by step in the following way.

We consider in $\Sp^2$ $N^1$ disjoint fixed balls $(B^{1,j})_{j=1\cdots N^1}$ and we consider a sequence of diffeomorphisms $\La_k^1$
\[
\La_k^1\ :\ \Sp^2\setminus\bigcup_{j=1}^{N^1} B_{s_k}(b^{1,j})\longrightarrow \Sp^2\setminus \bigcup_{j=1}^{N^1}B^{1,j}\quad,
\]
such that
\[
\La_k^1 (\p B_{s_k}(b^{1,j}))=\p B^{1,j}\quad,
\]
and
\be
\label{VI.9}
\limsup_{k\rightarrow +\infty}\|\nabla(\La_k^1)^{-1}\|_{L^\infty(\Sp^2\setminus \bigcup_{j=1}^{N^1}B^{1,j})}<+\infty\quad,
\ee
and such that $\Xi_k^1:=(\La_k^1)^{-1}$ converges weakly in $(W^{1,\infty})^\ast$ to a limiting diffeomorphism 

$$\Xi_\infty^1\ :\ \Sp^2\setminus \bigcup_{j=1}^{N^1}\ov{B^{1,j}}\longrightarrow \Sp^2\setminus\{b^{1,1}\cdots b^{1,N^1}\}\quad.$$

\medskip

We adopt the same notation as in the proof of Lemma~\ref{lm-VI.1} : for any $k\in {\N}$ and $i=1\cdots N$
\[
S^i_k:=\Sp^2\setminus\bigcup_{j=1}^{N^i} B_{s_k}(b^{i,j})\quad,
\]
and $J^{i,j}$ is the following set of indices
\[
J^{i,j}:=\lf\{i'\quad:\quad  \lf(f^i_k\rg)^{-1}\circ f^{i'}_k\lf(S^{i'}_k\rg)\subset B_{s_k}(b^{i,j})\rg\} \quad;
\]
we also let $\hat{J}^{i,j}$  be the following subset of $J^{i,j}$
\[
\hat{J}^{i,j}:=\lf\{i'\in J^{i,j}\quad:\quad\nexists\,i''\in J^{i,j}\quad\mbox{ s.t. }(f^i_k)^{-1}\circ f^{i'}_k\lf(S^{i'}_k\rg)\subset Conv\lf((f^i_k)^{-1}\circ f^{i''}_k\lf(S^{i''}_k\rg)\rg)\rg\}\quad,
\]
where $Conv(X)$ is the convex hull of $X$ in $\Sp^2$. We denote by $N^{i,j}$ the cardinality of $\hat{J}^{i,j}$. Recall that, due to (\ref{VI.6}), one has for any $i\in\{1\cdots N\}$ and any $j\in \{1\cdots N^{i,j}\}$
\be
\label{VI.10}
\lim_{k\rightarrow 0}diam\lf(\vec{\Phi}_k\circ f^i_k\lf(B_{s_k}(b^{i,j})\setminus \bigcup_{i'\in\hat{J}^{i,j}}\ Conv\lf((f_k^i)^{-1}\circ f_k^{i'}(S^{i'}_k)\rg)\rg)\rg)=0 \quad.
\ee

\medskip

For any $j\in \{1\cdots N^1\}$, inside $B^{1,j}$, we fix $N^{1,j}$ disjoint balls independent of $k$. We denote each of this ball by $B^i$ for each $i\in \hat{J}^{i,j}$.
We can always reorder the indexation in such a way that 
\[
(f^i_k)^{-1}\circ f^1_k\lf(S^1_k\rg)\subset B_{s_k}(b^{i,N_i})\quad.
\]
In each of the $B^i$
we fix again $N^i-1$ disjoint balls $(B^{i,j})_{j=1\cdots N^i-1}$ each being independent of $k$.  For each of these $i\in\hat{J}^{1,j}$ we pick a sequence of diffeomorphisms 
$\La_k^i$
\[
\La_k^i\ :\ \Sp^2\setminus\bigcup_{j=1}^{N^i} B_{s_k}(b^{i,j})\longrightarrow B^i\setminus \bigcup_{j=1}^{N^i-1}B^{i,j}\quad,
\]
such that
\[
\forall\, j=1\cdots N^i-1\quad\quad\La_k^i (\p B_{s_k}(b^{i,j}))=\p B^{i,j}\quad\mbox{ and }\quad\La_k^i (\p B_{s_k}(b^{i,N^i}))=\p B^i\quad,
\]
and
\be
\label{VI.11}
\limsup_{k\rightarrow +\infty}\|\nabla(\La_k^i)^{-1}\|_{L^\infty(B^i\setminus \bigcup_{j=1}^{N^i-1}B^{i,j})}<+\infty\quad,
\ee
and such that $\Xi^i_k:=(\La_k^i)^{-1}$ converges weakly in $(W^{1,\infty})^\ast$ to a limiting diffeomorphism 

$$\Xi_\infty^i\ :\ \Sp^2\setminus \bigcup_{j=1}^{N^i}\ov{B^{i,j}}\longrightarrow \Sp^2\setminus\{b^{i,1}\cdots b^{i,N^i}\}\quad.$$

We iterate this procedure in a straightforward way until having exhausted all the indices $i\in\{1\cdots N\}$.

\medskip

We now fix the sequence of diffeomorphism $\Psi_k$ in the following way : 
\begin{itemize}
\item[i)]
\[
\Psi_k:=f_k^1\circ\Xi_k^1\quad\quad\mbox{ on }\Sp^2\setminus \bigcup_{j=1}^{N^1}B^{1,j}\quad,
\]
\item[ii)]
\[
\Psi_k:=f_k^i\circ\Xi_k^i\quad\quad\mbox{ on } B^i\setminus \bigcup_{j=1}^{N^i-1}B^{i,j}\quad,
\]
\item[iii)]
$\Psi_k$ is chosen arbitrarily among the diffeomorphisms from $$B^{i,j}\setminus\bigcup_{i'\in \hat{J}^{i,j}} B^{i'}$$
into
$$
f^i_k\lf(B_{s_k}( b^{i,j})\rg)\setminus \bigcup_{i'\in \hat{J}^{i,j}}f^{{i'}}_k(B_{s_k}(b^{i',N^{i'}}))
$$
such that $\Psi_k:=f_k^i\circ\Xi_k^i$ on $\p B^{i,j}$ and $\Psi_k:= f^{i'}_k\circ\Xi_k^{i'}$ on $\p B^{i'}$.
\end{itemize}
Because of (\ref{VI.10}) we have that, modulo extraction of a subsequence, for any $i>1$ and $j=1\cdots N^{i}-1$, there exists a point $z^{i,j}\in M^m$ such that
\be
\label{VI.12}
\vec{\Phi}_k\circ\Psi_k\longrightarrow p^{i,j}\quad\mbox{ uniformly in }C^0\lf(B^{i,j}\setminus\bigcup_{i'\in \hat{J}^{i,j}} B^{i'} \rg)\quad.
\ee
Observe that
\[
\Xi_k^1\rightharpoonup \Xi_\infty^1\quad\quad\mbox{ weakly in }(W^{1,\infty})^\ast(\Sp^2\setminus \bigcup_{j=1}^{N^1}B^{1,j})\quad,
\]
and for for $i\ge 2$
\[
\Xi_k^i\rightharpoonup \Xi_\infty^i\quad\quad\mbox{ weakly in }W^{1,\infty}\lf(B^i\setminus \bigcup_{j=1}^{N^i-1}B^{i,j}\rg) \quad.
\]
Define now $\vec{f}_\infty$ from  $\Sp^2$ into $M^m$  by 
\[
\vec{f}_\infty:=\vec{\xi}_\infty^1\circ\Xi_{\infty}^1\quad\quad\mbox{ on }\quad \Sp^2\setminus \bigcup_{j=1}^{N^1}B^{1,j}\quad,
\]
by
\[
\vec{f}_\infty:=\vec{\xi}_\infty^i\circ\Xi_{\infty}^i\quad\quad\mbox{ on }\quad B^i\setminus \bigcup_{j=1}^{N^i-1}B^{i,j} \quad,
\]
and by
\[
\vec{f}_\infty:\equiv p^{i,j}\quad\quad\mbox{ on }\quad B^{i,j}\setminus\bigcup_{i'\in \hat{J}^{i,j}} B^{i'} \quad.
\]
It is now straightforward to check that the $N+1$-uplet $(\vec{f}_\infty, \vec{\xi}_\infty^1\cdots\vec{\xi}^N_\infty)$ satisfy the conclusions of Theorem~\ref{th-I.2}. This concludes the proof.\hfill $\Box$

\section{Weak Closure of Bubble Trees of Weak immersions.}

As it is, Theorem~\ref{th-I.2} is a weak-semi-closure result and not a weak-closure result {\it per se} in the sense that the
weak limit does not anymore belong to the same class of {\it weak immersions},  ${\mathcal F}_{\Sp^2}$, but is made of a finite family of elements of ${\mathcal F}_{\Sp^2}$
that we will call a {\it bubble tree of weak immersions}.
In order to remedy to this difficulty we are going to define rigorously the class of {\it bubble tree of weak immersions} and prove afterwards
a weak closure result in this class (see Theorem~\ref{th-VII.1}).

\begin{Dfi}
\label{df-VII.1}{\bf[Bubble trees of weak immersions]}
We call a {\it bubble tree of weak immersions} a $N+1$-tuple $\vec{T}:=(\vec{f},\vec{\Phi}^1\cdots\vec{\Phi}^N)$, where $N$ is an arbitrary integer, $\vec{f}\in W^{1,\infty}(\Sp^2,M^m)$
and $\vec{\Phi}^i\in{\mathcal F}_{\Sp^2}$ for $i=1\cdots N$ satisfy the following conditions.
There exists a family of $N$ geodesic balls $B^i\subset \Sp^2$ such that 
\[
\forall \, i\ne i'\quad\mbox{ either }
\ov{B^i}\subset B^{i'}\quad\mbox{ or }\quad\ov{B^{i'}}\subset B^i\quad\mbox{ and }\quad B^1=\Sp^2 \quad.
\]
For any $i\in \{1\cdots N\}$ there exist a natural integer $N^i$ and $N^i$ disjoint open geodesic balls $B^{i,j}$ strictly included in $B^i$ such that,
\[
 \forall\,i'\ne i\quad\mbox{ either }\quad
\ov{B^i}\subset B^{i'}\quad\mbox{ or }\quad\exists\,j\in\{1\cdots N^i\}\quad\mbox{ s.t. }\quad \ov{B^{i'}}\subset  B^{i,j}\quad.
\]
For any $i\in\{1\cdots N\}$ there exists $N^i$ distinct points  $b^{i,1}\cdots b^{i,N^i}$ of $\Sp^2$ and a lipschitz diffeomorphism
\[
\Xi^i\ :\ B^i\setminus \bigcup_{j=1}^{N^i-1}\ov{B^{i,j}}\quad\longrightarrow\quad \Sp^2\setminus \{b^{i,1}\cdots b^{i,N^i}\}
\]
such that $\Xi_i$ extends to a lipschitz map 
\[
\ov{\Xi}_i\ :\ \ov{B^i}\setminus \bigcup_{j=1}^{N^i-1}{B^{i,j}}\longrightarrow \Sp^2\quad\mbox{ such that }\quad\ov{\Xi}_i(\p B^{i,j})=b^{i,j}\quad\mbox{ and }\quad\ov{\Xi}_i(\p B^i)=b^{i,N^i}\quad.
\]
Moreover
\[
\forall\, i=1\cdots N \quad\text{one has}\quad\vec{f}=\vec{\Phi}^i\circ\Xi^i\quad\quad\mbox{ on }\quad B^i\setminus \bigcup_{j=1}^{N^i-1}B^{i,j}
\]
and for any $i\in\{1\cdots N\}$ and any $j\in\{1\cdots N^i\}$ there exists a point $p^{i,j}\in M^m$ such that
\[
\vec{f}:\equiv p^{i,j}\quad\quad\mbox{ on }\quad B^{i,j}\setminus\bigcup_{i'\in \hat{J}^{i,j}} B^{i'} 
\]
where
\[
J^{i,j}:=\{i'\quad: \quad \ov{B^{i'}}\subset  B^{i,j}\}\quad.
\]
We denote by ${\mathcal T}$ the space of bubble trees of weak immersions. and for any $\vec{T}\in {\mathcal T}$  we denote
\[
G(\vec{T}):=\sum_{i=1}^NG(\vec{\Phi}^i):=\sum_{i=1}^NA(\vec{\Phi}^i)+F(\vec{\Phi}^i)=\sum_{i=1}^N\int_{\Sp^2}\lf[1+\frac{|D\vec{n}_{\vec{\Phi}^i}|^2}{2}\rg]\ dvol_{g_{\vec{\Phi}^i}}\quad.
\]
Assuming $x_0\in \vec{f}(\Sp^2)$, the homotopy class of $\vec{T}$ in $\pi_2(M^m,x_0)$ is the class of $\vec{f}$ and is denoted by $[\vec{T}]$.

 The subspace of elements $\vec{T}$ in
${\mathcal T}$ such that $x_0\in \vec{f}(\Sp^2)$ is denoted by ${\mathcal T}_{x_0}$.
\hfill $\Box$
\end{Dfi}

\begin{Th}
\label{th-VII.1}{\bf[Weak closure of bubble trees of weak immersions]}
Let $T_k=(\vec{f}_k,\vec{\Phi}_k^1\cdots\vec{\Phi}_k^{N_k})$ be a sequence of elements in ${\mathcal T}$ such that
\[
\limsup_{k\rightarrow +\infty}G(\vec{T}_k)=\limsup_{k\rightarrow +\infty}\sum_{i=1}^{N_k}\int_{\Sp^2}\lf[1+\frac{|D\vec{n}_{\vec{\Phi}^i_k}|^2}{2}\rg]\ dvol_{g_{\vec{\Phi}^i_k}}<+\infty\quad.
\]
Assume each $\vec{\Phi}^i_k$ is weakly conformal and that
\[
\liminf_{k\rightarrow+\infty}\sum_{i=1}^{N_k} diam\lf(\vec{\Phi}^i_k(\Sp^2)\rg)>0\quad.
\]
Then there exists a subsequence that we keep denoting $\vec{T}_k$ such that $N_k=N$ is constant and there exists a sequence of lipschitz diffeomorphisms $\Psi_k$
of $\Sp^2$ such that
\be
\label{VII.1}
\vec{f}_k\circ\Psi_k\longrightarrow \vec{u}_\infty\quad\quad\mbox{ uniformly in }C^0(\Sp^2,M^m)\quad,
\ee
where $\vec{u}_\infty\in W^{1,\infty}(\Sp^2,M^m)$, such that
\be
\label{VII.1a}
Area(\vec{f}_k(\Sp^2))\longrightarrow Area(\vec{u}_\infty(\Sp^2))\quad,
\ee
moreover for any $i=1\cdots N$ there exists $Q^i\in {\N}$ and $Q^i$ sequences of elements $f^{i,j}_k\in {\mathcal M}^+(\Sp^2)$
and for each $(i,j)$ there exist finitely many points $b^{i,j,1}\cdots b^{i,j,Q^{i,j}}$ such that
\be
\label{VII.2}
\vec{\Phi}^i_k\circ f^{i,j}_k\rightharpoonup \vec{\xi}^{i,j}_{\infty}\quad\quad\mbox{ weakly in }W^{2,2}_{loc}(\Sp^2\setminus\{b^{i,j,1}\cdots b^{i,j,Q^{i,j}}\})\quad.
\ee
The maps $\vec{\xi}^{i,j}_{\infty}$ are conformal weak immersions of ${\mathcal F}_{\Sp^2}$ and 
\be
\label{VII.3}
\lf(\vec{u}_\infty,(\vec{\xi}_\infty^{1,j})_{j=1\cdots Q^1}\cdots(\vec{\xi}_\infty^{N,j})_{j=1\cdots Q^N}\rg)\in{\mathcal T}\quad.
\ee
\hfill $\Box$
\end{Th}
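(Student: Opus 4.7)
The strategy is to reduce Theorem~\ref{th-VII.1} to Theorem~\ref{th-I.2} applied componentwise, and then to stitch the resulting bubble decompositions into a single bubble tree by a global reparametrization $\Psi_k$ of $\Sp^2$.

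First I would establish uniform control on $N_k$. For any $\vec{\xi}\in\mathcal{F}_{\Sp^2}$, Gauss--Bonnet combined with the Gauss equation and $|{\mathbb I}|^2\geq 2|\vec{H}|^2$ yields $F(\vec{\xi})\geq W(\vec{\xi})\geq 4\pi$, so $G(\vec{\Phi}_k^i)\geq 4\pi$ and hence $N_k\leq G(\vec{T}_k)/(4\pi)\leq C$. Extracting a subsequence, we may assume $N_k=N$ is constant. We then separate indices: let $I^+:=\{i:\liminf_k\diam(\vec{\Phi}_k^i(\Sp^2))>0\}$ and $I^0:=\{1,\dots,N\}\setminus I^+$. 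For $i\in I^+$, Theorem~\ref{th-I.2} furnishes bilipschitz homeomorphisms $\alpha_k^i$ of $\Sp^2$, $Q^i$ sequences of M\"obius transformations $f_k^{i,j}\in\mathcal{M}^+(\Sp^2)$, branching points $b^{i,j,l}$, weak conformal immersions $\vec{\xi}_\infty^{i,j}\in\mathcal{F}_{\Sp^2}$, and a Lipschitz limit $\vec{f}_\infty^i\in W^{1,\infty}(\Sp^2,M^m)$ satisfying (\ref{I.5})--(\ref{I.8}). For $i\in I^0$, the component collapses to a point $p^i\in M^m$; we set $Q^i=0$ and $\vec{f}_\infty^i\equiv p^i$. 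The diameter hypothesis $\liminf_k\sum_i\diam(\vec{\Phi}_k^i(\Sp^2))>0$ guarantees $I^+\neq\emptyset$.

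Next I would construct $\Psi_k$ inductively along the tree, following the template of the proof of Theorem~\ref{th-I.2} in Section~\ref{SubSec:SingPoints}. Fix once and for all a nested configuration of disjoint geodesic balls in a ``model'' $\Sp^2$ indexed by the pairs $(i,j)$ of the prospective limit tree, and extend it to include the $Q^i$ sub-bubbles produced by each Theorem~\ref{th-I.2} application. Recalling that on each annular region $B^i\setminus\bigcup_{j=1}^{N^i-1}B^{i,j}$ one has $\vec{f}_k=\vec{\Phi}_k^i\circ\Xi_k^i$ (with $\Xi_k^i$ now depending on $k$), define $\Psi_k$ on the corresponding model annulus by
\[
\Psi_k:=(\Xi_k^i)^{-1}\circ\alpha_k^i\circ\Lambda_k^i,
\]
where $\Lambda_k^i$ is a sequence of diffeomorphisms from the fixed model annulus to $\Sp^2$ minus the preimages under $\alpha_k^i$ of the (limits of the) boundary circles $\p B^{i,j}$; the $\Lambda_k^i$ can be chosen to converge in $C^0$ to a limit $\Lambda_\infty^i$, by the same moving-boundary argument used to build the $\La_k^i$ and $\Xi_k^i$ at the end of Section~\ref{SubSec:SingPoints} (proof of Theorem~\ref{th-I.2}). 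Matching across the shrinking annular necks between the $B^{i,j}$ of different components is handled by the cutting-and-filling procedure of Lemma~\ref{lm-V.2}, whose error terms vanish thanks to (\ref{VI.6}) and (\ref{VI.6b}). On each ``filled'' disc region $B^{i,j}\setminus\bigcup_{i'\in\hat{J}^{i,j}}B^{i'}$, we may define $\Psi_k$ by any diffeomorphism onto the corresponding model disc, since (\ref{VI.6}) forces $\vec{f}_k\circ\Psi_k$ to converge uniformly to the constant $p^{i,j}$ there.

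Finally I would define $\vec{u}_\infty$ piecewise: on each model annulus it equals $\vec{f}_\infty^i\circ\Lambda_\infty^i$; on each filled disc it equals the corresponding constant $p^{i,j}$. The assertions (\ref{VII.1}) and (\ref{VII.2}) follow immediately from the $C^0$-convergence of $\vec{\Phi}_k^i\circ\alpha_k^i$ to $\vec{f}_\infty^i$ (on the annuli) and from the vanishing-diameter estimate (\ref{VI.6}) (on the discs); (\ref{VII.1a}) follows from (\ref{I.6b}) of Theorem~\ref{th-I.2} applied to each $i\in I^+$ together with the area-vanishing estimate (\ref{VI.6b}); and (\ref{VII.3}) is obtained by verifying the hierarchical ball structure of Definition~\ref{df-VII.1} for the collection $(\vec{u}_\infty,(\vec{\xi}_\infty^{i,j}))$, which is built into the model configuration by construction.

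The main obstacle I anticipate is the bookkeeping needed when some root components $\vec{\Phi}_k^i$ collapse ($i\in I^0$) but their descendants in the tree do not: in that case the parametrization $\Xi_k^i$ may become degenerate in the limit and cannot be directly inverted. To handle this one must ``skip over'' collapsing levels, replacing the composition $\Xi_k^i\circ\Xi_k^{i'}$ etc.\ by the overall diffeomorphism from the outermost non-collapsing ancestor, and rescaling the M\"obius transformations of the non-collapsing descendants accordingly. The diameter assumption ensures the outermost non-collapsing ancestor always exists, which is exactly what makes the inductive/gluing construction go through.
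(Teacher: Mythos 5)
Your strategy is essentially the paper's: bound the number of bubbles by the energy, pass to a subsequence with fixed tree combinatorics, apply Theorem~\ref{th-I.2} to each component, and assemble the global reparametrization $\Psi_k$ on a fixed nested configuration of balls exactly as in the proof of Theorem~\ref{th-I.2}, the constant regions being handled by the compactness of $M^m$; in fact you are more explicit than the paper about the components whose diameter collapses, which the paper's (very terse) proof does not discuss. Three points, however, need repair. First, your lower bound ``$F(\vec{\xi})\ge W(\vec{\xi})\ge 4\pi$'' is false for immersions into a Riemannian manifold (a totally geodesic sphere, e.g.\ an equator in a round $\Sp^3\subset M$, has ${\mathbb I}\equiv 0$, hence $F=W=0$); the correct statement, and the one the paper invokes in the proof of Lemma~\ref{lm-VI.1}, is $\inf_{\vec{\xi}\in{\mathcal F}_{\Sp^2}}G(\vec{\xi})=c_0>0$, obtained via the Nash embedding: either the area is bounded below by a constant of $M$, or the image lies in a small ball where the Euclidean Willmore bound applies to $\vec{I}\circ\vec{\xi}$, with the area term absorbing the curvature of the embedding; this still yields $N_k\le G(\vec{T}_k)/c_0$. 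Second, no cutting-and-filling or neck analysis (Lemma~\ref{lm-V.2}, (\ref{VI.6}), (\ref{VI.6b})) is needed to match distinct components of the bubble tree: by Definition~\ref{df-VII.1} the components of $\vec{T}_k$ are joined along regions where $\vec{f}_k$ is \emph{constant}, so only the convergence of the constants $p^{i,j}_k$ (compactness of $M^m$) and the consistency of boundary values coming from the $C^0$ convergence in Theorem~\ref{th-I.2} enter; the neck estimates are already internal to Theorem~\ref{th-I.2}. Third, for (\ref{VII.1a}) you also need that the total area of each collapsing component tends to zero; this does not follow from (\ref{VI.6b}) but from Simon's monotonicity formula applied to $\vec{I}\circ\vec{\Phi}^i_k$ (bounded Euclidean Willmore energy and diameter $d_k\to 0$ give $Area=O(d_k^2)$, as in (\ref{n.VI.4})--(\ref{n.VI.5})). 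With these corrections, and with the re-rooting of the tree at a non-collapsing component that you sketch at the end (which is indeed necessary for (\ref{VII.3}) when the root collapses, since the outermost region of a bubble tree must be parametrized by a genuine weak immersion), your argument goes through and coincides with the paper's.
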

\noindent{\bf Proof of Theorem~\ref{th-VII.1}.}
First of all the uniform bound on $G(\vec{T}_k)$ implies that $N_k$ is uniformly bounded as well as all the numbers $N^i_k$ associated to the underlying tree.
We can then extract a subsequence such that theses numbers are uniformly bounded and such that the associated tree $(i,j)\rightarrow J^{i,j}$ is fixed.
We can moreover find a sequence of diffeomorphism $\Psi_k$ such that, replacing $\vec{f}_k$ by $\vec{f}_k\circ\Psi_k$, the $B^i$'s, the $B^{i,j}$'s, the $\Xi^i$'s and the $b^{i,j}$'s are fixed independently of $k$ and satisfy 
\[
\forall\, i=1\cdots N \quad\quad\vec{f}_k\circ\Psi_k=\vec{\Phi}_k^i\circ\Xi^i\quad\quad\mbox{ on }\quad B^i\setminus \bigcup_{j=1}^{N^i-1}B^{i,j}\quad.
\]
Finally, since $M^m$ is compact, we can also choose the subsequence such that 
\[
\vec{f}_k\circ\Psi_k=p^{i,j}_k\rightarrow p^{i,j}_\infty\in M^m\quad\quad\mbox{ on }\quad B^{i,j}\setminus\bigcup_{i'\in \hat{J}^{i,j}} B^{i'} \quad.
\]
Now we can apply Theorem~\ref{th-I.2} to each of the sequences $\vec{\Phi}_k^i$. Modifying accordingly the diffeomorphism $\Psi_k$
in each of the $B^{i,j}\setminus\bigcup_{i'\in \hat{J}^{i,j}} B^{i'}$  and collecting all informations together provides (\ref{VII.1}), (\ref{VII.2}) and (\ref{VII.3});
Theorem~\ref{th-VII.1} is hence proved.\hfill $\Box$

\medskip

The weak closure of bubble trees of weak immersions, namely Theorem~\ref{th-VII.1}, implies in a straightforward way the following corollary once it is known that
any homotopy class $\pi_2(M,x_0)$ can be realized by an element in ${\mathcal F}_{\Sp^2}$
\begin{Co}
\label{co-VII.1}
Let $x_0\in M^m$ and let $\gamma\in\pi_2(M^m,x_0)$. Assume $\gamma\ne 0$, then
\[
\inf_{\vec{T}\in{\mathcal T}_{x_0}\ ;\ [T]=\gamma}G(\vec{T})
\]
is achieved by an element $\vec{T}_\gamma:=(\vec{f}_\gamma,\vec{\Phi}^1_\gamma\cdots\vec{\Phi}^N_\gamma)$. Moreover if we don't fix the base point
and consider now $\gamma\in \pi_2(M^m)$ and $\gamma\ne 0$, we have that
\[
\inf_{\vec{T}\in{\mathcal T}\ ;\ [T]=\gamma}G(\vec{T})
\]
is achieved by an element $\vec{G}_\gamma:=(\vec{f}_\gamma,\vec{\Phi}^1_\gamma\cdots\vec{\Phi}^N_\gamma)$.
\hfill $\Box$
\end{Co}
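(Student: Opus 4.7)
The plan is to apply the direct method of the calculus of variations, using Theorem~\ref{th-VII.1} as the compactness ingredient. Start with a minimizing sequence $\vec{T}_k=(\vec{f}_k,\vec{\Phi}^1_k\cdots\vec{\Phi}^{N_k}_k)\in{\mathcal T}_{x_0}$ with $[\vec{T}_k]=\gamma$ and $G(\vec{T}_k)\to\inf$. By the stated hypothesis this infimum is finite, since any class in $\pi_2(M^m,x_0)$ is realized by some element of ${\mathcal F}_{\Sp^2}$ (viewed as a one-component bubble tree). By Proposition~\ref{pr-I.1}, each $\vec{\Phi}^i_k$ can be precomposed with a bilipschitz diffeomorphism of $\Sp^2$ to become conformal; absorbing these reparametrizations into the gluing maps $\Xi^i$ of the underlying tree preserves both the element of ${\mathcal T}_{x_0}$, the map $\vec{f}_k$, and the functional $G(\vec{T}_k)$. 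Hence we may assume all components are conformal.

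Next I would verify the diameter hypothesis of Theorem~\ref{th-VII.1}, namely $\liminf_k\sum_i\diam(\vec{\Phi}^i_k(\Sp^2))>0$. Suppose the contrary along a subsequence. Then every bubble collapses uniformly, and since $\vec{f}_k$ is built by gluing the $\vec{\Phi}^i_k$'s (or constants) through the fixed tree structure, the image $\vec{f}_k(\Sp^2)$ is contained in a set of diameter $o(1)$, hence in a geodesically convex ball of $M^m$ for $k$ large. Such a map is null-homotopic, contradicting $[\vec{T}_k]=\gamma\ne 0$. Thus, passing to a subsequence, the hypotheses of Theorem~\ref{th-VII.1} are satisfied, and we obtain a fixed tree structure, a sequence of lipschitz diffeomorphisms $\Psi_k$, a limit $\vec{u}_\infty\in W^{1,\infty}(\Sp^2,M^m)$, and conformal weak immersions $\vec{\xi}^{i,j}_\infty\in{\mathcal F}_{\Sp^2}$, such that the $(N+1)$-tuple $\vec{T}_\infty:=(\vec{u}_\infty,(\vec{\xi}^{i,j}_\infty))$ belongs to ${\mathcal T}$, with $\vec{f}_k\circ\Psi_k\to\vec{u}_\infty$ uniformly in $C^0$.

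The uniform $C^0$ convergence together with $x_0\in\vec{f}_k\circ\Psi_k(\Sp^2)$ and compactness of $M^m$ gives $x_0\in\vec{u}_\infty(\Sp^2)$, so $\vec{T}_\infty\in{\mathcal T}_{x_0}$. Moreover, because the maps $\vec{f}_k\circ\Psi_k$ are free-homotopic to $\vec{f}_k$ (a diffeomorphism of $\Sp^2$ is isotopic to the identity or, if orientation-reversing, can be ruled out by choosing orientation-preserving $\Psi_k$) and they converge uniformly to $\vec{u}_\infty$, we conclude $[\vec{T}_\infty]=[\vec{f}_\infty]=\gamma$ in $\pi_2(M^m,x_0)$. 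The same uniform convergence argument takes care of the base-point-free case $\gamma\in\pi_2(M^m)$.

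It remains to establish lower semicontinuity of $G$ along the convergence produced by Theorem~\ref{th-VII.1}. The area piece is controlled by the area-convergence statement \eqref{VII.1a} applied iteratively at each node of the tree (using \eqref{I.6b} of Theorem~\ref{th-I.2} inside each bubble extraction): $\sum_iA(\vec{\Phi}^i_k)\to\sum_{i,j}A(\vec{\xi}^{i,j}_\infty)$. For the curvature piece, the weak $W^{2,2}_{\mathrm{loc}}$ convergence \eqref{VII.2} away from the concentration points $b^{i,j,\ell}$ combined with the pointwise lower-semicontinuity of $|D\vec{n}|^2\,dvol_g$ gives, by Fatou's lemma on the complement of arbitrarily small neighborhoods of these points and summation over the tree,
\[
\liminf_{k\to+\infty}\sum_{i=1}^{N}F(\vec{\Phi}^i_k)\ge\sum_{i=1}^N\sum_{j=1}^{Q^i}F(\vec{\xi}^{i,j}_\infty).
\]
Combining area convergence and curvature lower semicontinuity yields $\liminf_k G(\vec{T}_k)\ge G(\vec{T}_\infty)$, which with the minimizing property forces equality and hence $\vec{T}_\infty$ achieves the infimum. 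The main obstacle is the lower-semicontinuity step: one has to exclude that curvature mass escapes into the concentration points without being captured by any bubble $\vec{\xi}^{i,j}_\infty$; this is handled by noting that the bubble extraction procedure of Lemmas~\ref{lm-IV.1}--\ref{lm-VI.1} and its iteration in the proof of Theorem~\ref{th-VII.1} exhausts every region where a definite amount of energy concentrates, so no curvature mass is lost, only redistributed among the limiting bubbles.
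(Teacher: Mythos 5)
There is a genuine gap: you have assumed precisely the point that constitutes the paper's proof of Corollary~\ref{co-VII.1}. Your opening step declares ``by the stated hypothesis this infimum is finite, since any class in $\pi_2(M^m,x_0)$ is realized by some element of ${\mathcal F}_{\Sp^2}$'' --- but no such hypothesis appears in the statement of the corollary, and this realization is not automatic. If no competitor $\vec{T}\in{\mathcal T}_{x_0}$ with $[\vec{T}]=\gamma$ and $G(\vec{T})<+\infty$ exists, the infimum is taken over the empty set and nothing can be ``achieved''. The paper's proof is entirely devoted to supplying this competitor: one invokes the Sacks--Uhlenbeck result (Theorem~\ref{th-I.1}) to obtain finitely many branched conformal minimal immersions of $\Sp^2$ whose classes generate $\pi_2(M^m)$ under the $\pi_1(M^m)$-action, and then connects these immersed spheres to each other and to the base point $x_0$ by thin tubes (tubular neighborhoods of $C^1$ paths), producing a single weak branched immersion in ${\mathcal F}_{\Sp^2}$ --- hence a one-component bubble tree in ${\mathcal T}_{x_0}$ of finite $G$-energy --- representing the prescribed class $\gamma$. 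Without this construction (or some substitute for it), your argument never gets off the ground, since a minimizing sequence need not exist.

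The remainder of your proposal --- conformal reparametrization via Proposition~\ref{pr-I.1}, the contradiction argument showing that total collapse of the bubbles would force $\vec{f}_k(\Sp^2)$ into a geodesically convex ball and hence $\gamma=0$, the preservation of the homotopy class under the uniform convergence (\ref{VII.1}), and lower semicontinuity of $G$ from the area convergence and the weak $W^{2,2}_{loc}$ convergence (\ref{VII.2}) --- is essentially the ``straightforward'' application of Theorem~\ref{th-VII.1} that the paper leaves implicit, and is in order. One small conceptual correction there: for the direct method you only need $\liminf_k G(\vec{T}_k)\ge G(\vec{T}_\infty)$, so possible loss of curvature energy in the neck regions at the concentration points is harmless; your closing worry about ``excluding that curvature mass escapes without being captured by any bubble'' concerns energy quantization, which is not needed for lower semicontinuity and hence not needed here.
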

We will show in \cite{MoRi} that, for any $\gamma\in \pi_2(M^m)$, the $\vec{\Phi}^i_\gamma$ realize {\it area constrained smooth, possibly branched, Willmore immersions} of $\Sp^2$.

\medskip

\noindent{Proof of Corollary~\ref{co-VII.1}.} We just have to prove that for any $\gamma\in\pi_2(M^m,x_0)$ there exists an element of ${\mathcal F}_{\Sp^2}$ realizing
$\gamma$. From Theorem~\ref{th-I.1} we have a family of possibly branched conformal smooth immersions $(\vec{\Phi}^i)_{i=1\cdots N}$ which generates
$\pi_2(M^m)$ modulo the action of $\pi_1(M^m)$. In other words we can connect these immersions by tubular neighborhoods of $C^1$ paths going either from
a given base point to theses branched immersed spheres or from one of these spheres to another one in order to have a generating family
of $\pi_2(M^m)$. There is no difficulty to ``connect'' these spheres by these tubes in order to realize a branched immersion of ${\mathcal F}_{\Sp^2}$. We have then proved that
any $\gamma\in \pi_2(M^m,x_0)$ can be realized by an element of ${\mathcal F}_{\Sp^2}$. This fact combined with Theorem~\ref{th-VII.1} implies Corollary~\ref{co-VII.1}.\hfill $\Box$

\end{document}